\tikzset{knotarrow/.pic={ \draw[edge, <-] (0,0) -- +(-.001,0);}}
\tikzset{edge/.style={line width=0.8}}
\tikzset{wall/.style={very thick}}
\tikzset{->-/.style n args={2}{decoration={markings, mark=at position #1 with {\arrow{#2}}}, postaction={decorate}}} 
\tikzset{-o-/.code 2 args={\ifstreqF{#2}{} 
{\ifstreqTF{#2}{>}
   {\pgfkeysalso{decoration={markings,mark=at position #1 with {\arrow[scale=0.8]{#2}}}
                    ,postaction={decorate}}
    }
   {\ifstreqTF{#2}{<}
       {\pgfkeysalso{decoration={markings,mark=at position #1 with {\arrow[scale=0.8]{#2}}}
                    ,postaction={decorate}}
        }
       {\pgfkeysalso{decoration={markings,
                    mark=at position #1 with
                    {\draw[black, fill={#2}] circle[radius=2pt];}}
                    ,postaction={decorate}}
        }
     }
  }}}
\newtheorem{theorem}{Theorem}[section]
\newtheorem{lemma}[theorem]{Lemma}
\newtheorem{definition}[theorem]{Definition}
\newtheorem{corollary}[theorem]{Corollary}
\newtheorem{proposition}[theorem]{Proposition}
\newtheorem{remark}[theorem]{Remark}
\newcommand{\bp}{\begin{proposition}}
\newcommand{\ep}{\end{proposition}}
\newcommand{\bpr}{\begin{proof}}
\newcommand{\epr}{\end{proof}}
\newcommand{\bt}{\begin{theorem}}
\newcommand{\et}{\end{theorem}}
\newcommand{\bl}{\begin{lemma}}
\newcommand{\el}{\end{lemma}}
\newcommand{\bcr}{\begin{corollary}}
\newcommand{\ecr}{\end{corollary}}
\newcommand{\be}{\begin{equation}}
\newcommand{\ee}{\end{equation}}
\newcommand{\bes}{\begin{equation*}}
\newcommand{\ees}{\end{equation*}}
\newcommand{\ba}{\begin{align}}
\newcommand{\ea}{\end{align}}
\newcommand{\bas}{\begin{align*}}
\newcommand{\eas}{\end{align*}}
\DeclareMathOperator{\im}{\mathrm{Im}}
\DeclareMathOperator{\Int}{\mathrm{Int}}
\DeclareMathOperator{\sgn}{\mathrm{sgn}}
\newcommand{\vs}[0]{\vspace{2mm}}
\begin{document}

\title{Naturality of ${\rm SL}_n$ quantum trace maps for surfaces}

\author[Hyun Kyu Kim]{Hyun Kyu Kim}
\address{School of Mathematics, Korea Institute for Advanced Study (KIAS), 85 Hoegi-ro, Dongdaemun-gu, Seoul 02455, Republic of Korea}
\email{hkim@kias.re.kr}

\author[Zhihao Wang]{Zhihao Wang}
\address{Zhihao Wang, School of Physical and Mathematical Sciences, Nanyang Technological University, 21 Nanyang Link Singapore 637371}
\email{ZHIHAO003@e.ntu.edu.sg}
\address{University of Groningen, Bernoulli Institute, 9700 AK Groningen, The Netherlands}
\email{wang.zhihao@rug.nl}

\keywords{}

 \maketitle

\begin{abstract}
The ${\rm SL}_n$-skein algebra of a punctured surface $\mathfrak{S}$, studied by Sikora, is an algebra generated by isotopy classes of $n$-webs living in the thickened surface $\mathfrak{S} \times (-1,1)$, where an $n$-web is a union of framed links and framed oriented $n$-valent graphs satisfying certain conditions. For each ideal triangulation $\lambda$ of $\mathfrak{S}$, L\^e and Yu constructed an algebra homomorphism, called the ${\rm SL}_n$-quantum trace, from the ${\rm SL}_n$-skein algebra of $\mathfrak{S}$ to a so-called balanced subalgebra of the $n$-root version of Fock and Goncharov's quantum torus algebra associated to $\lambda$. We show that the ${\rm SL}_n$-quantum trace maps for different ideal triangulations are related to each other via a balanced $n$-th root version of the quantum coordinate change isomorphism, which extends Fock and Goncharov's isomorphism for quantum cluster varieties. We avoid heavy computations in the proof, by using the splitting homomorphisms of L\^e and Sikora, and a network dual to the $n$-triangulation of $\lambda$ studied by Schrader and Shapiro.
\end{abstract}

\tableofcontents

\newcommand{\ca}{{\cev{a}  }}

\def\BZ{\mathbb Z}
\def\Id{\mathrm{Id}}
\def\Mat{\mathrm{Mat}}
\def\BN{\mathbb N}

\def \cb {\color{blue}}
\def \cred {\color{red}}
\def \cbf {\color{blue}\bf}
\def \credf {\color{red}\bf}
\definecolor{ligreen}{rgb}{0.0, 0.3, 0.0}
\def \cg {\color{ligreen}}
\def \cgf {\color{ligreen}\bf}
\definecolor{darkblue}{rgb}{0.0, 0.0, 0.55}
\def \dbf {\color{darkblue}\bf}
\definecolor{anti-flashwhite}{rgb}{0.55, 0.57, 0.68}
\def \afw {\color{anti-flashwhite}}
\def\cF{\mathcal F}
\def\cP{\mathcal P}
\def\embed{\hookrightarrow}
\def\pr{\mathrm{pr}}
\def\cV{\mathcal V}
\def\ot{\otimes}
\def\buu{{\mathbf u}}


\def \ri {{\rm i}}
\newcommand{\bs}[1]{\boldsymbol{#1}}
\newcommand{\cev}[1]{\reflectbox{\ensuremath{\vec{\reflectbox{\ensuremath{#1}}}}}}
\def\bS{\bar \fS}
\def\cE{\mathcal E}
\def\fB{\mathfrak B}
\def\cR{\mathcal R}
\def\cY{\mathcal Y}
\def\cS{\mathscr S}
\def\rS{\overline{\cS}_\omega}

\def\fS{\mathfrak{S}}

\def\MN {(M)}
\def\cN {\mathcal{N}}
\def\SL{{\rm SL}_n}

\def\bP{\mathbb P}
\def\bR{\mathbb R}

\def\SS{\cS_{\omega}(\fS)}
\def\rdS{\overline \cS_{\omega}(\fS)}
\def\rdP{\overline \cS_{\omega}(\mathbb{P}_4)}

\newcommand{\beq}{\begin{equation}}
	\newcommand{\eeq}{\end{equation}}

\section{Introduction}

\subsection{Overview}

Let $\fS$ be a {\em punctured surface}, that is, a closed oriented surface minus a non-empty finite set of points called punctures such that every component of $\fS$ contains at least one puncture. For an algebraic group ${\rm G}$, one considers the moduli space of ${\rm G}$-local systems on $\fS$, which is identified with the space
$$
\mathscr{L}_{{\rm G},\fS} = {\rm Hom}(\pi_1(\fS), {\rm G})/{\rm G},
$$
which is the quotient of the space ${\rm Hom}(\pi_1(\frak{S}),{\rm G})$ of group homomorphisms $\pi_1(\fS) \to {\rm G}$ by the action of ${\rm G}$ given by  conjugation by an element of ${\rm G}$. 
Here we regard this space as a stack, with the algebra of functions $\mathcal{O}(\mathscr{L}_{{\rm G},\fS})$ being the algebra of ${\rm G}$-invariant functions $\mathcal{O}({\rm Hom}(\pi_1(\frak{S}),{\rm G}))^{\rm G}$; one might call this space the ${\rm G}$-character stack for $\fS$. 
This space is equipped with a natural symplectic structure per each choice of an invariant bilinear form on ${\rm G}$ \cite{Goldman}, hence the problem of quantization along this symplectic structure arises. Here we focus on the algebraic aspect of quantization, for which one would like to deform the algebra of regular functions $\mathcal{O}(\mathscr{L}_{{\rm G},\fS})$ by a one-parameter family of non-commutative algebras, in an appropriate sense. When ${\rm G} = {\rm SL}_n$, $n\ge 2$, it is well-known that $\mathcal{O}(\mathscr{L}_{{\rm SL}_n,\fS})$ is generated by the {\em trace-of-monodromy} functions $f_\gamma$ associated to isotopy classes of oriented loops $\gamma$ in $\fS$; the value of $f_\gamma$ at the equivalence class of a group homomorphism $\rho : \pi_1(\fS) \to {\rm SL}_n$ is ${\rm trace}(\rho([\gamma]))$, where $[\gamma]$ is the element of $\pi_1(\fS)$ represented by $\gamma$. 
When $n\ge 3$, a complete set of algebraic relations among $f_\gamma$ is rather complicated, and becomes much simpler when one also takes into consideration more functions $f_W$, each of which is associated to a combinatorial object $W$ called a {\em web} or an {\em ${\rm SL}_n$-web}; we call it an {\em $n$-web}. Here we consider a model developed by Sikora \cite{Sikora01}, in which an $n$-web is a union of oriented loops and oriented graphs in $\fS$, where each vertex of the graph is required to be either a source or a sink. See \cite{CKM14} for another model of Cautis, Kamnitzer and Morrison, based on 3-valent graphs with edges labeled by fundamental representations of ${\rm SL}_n$, and \cite{Poudel} for the equivalence of the two models. 

A quantum deformed algebra for $\mathcal{O}(\mathscr{L}_{{\rm SL}_n,\fS})$ is constructed with the help of 3-dimensional quantum topology. The {\em ${\rm SL}_n$-skein algebra} $\mathscr{S}_\omega(\fS) = \mathscr{S}_\omega^{{\rm SL}_n}(\fS)$ is an associative algebra generated by the isotopy classes of framed $n$-webs living in the thickened surface $\fS \times (-1,1)$, modulo the {\it skein relations} 
\eqref{w.cross}--\eqref{wzh.four}, where the product of two framed $n$-webs is given by superposition, i.e. stacking one on top of the other; see \S\ref{subsec.reduced_stated_SLn-skein_algebras}. Here $\omega$ appearing in the relations is a formal quantum parameter, which is related to another quantum parameter $q$ by
$$
q = \omega^{n^2}.
$$
There are various versions of this algebra; the case when $n=2$ is first studied by Turaev \cite{Turaev} and Przytycki \cite{P99}, and for general $n$ by Sikora \cite{Sikora05} and Cautis-Kamnitzer-Morrison \cite{CKM14}, building on the work of Kuperberg \cite{Kuperberg} on invariants of tensor product representations of Lie algebras and quantum groups.

When $n=2$, the ${\rm SL}_2$-character stack $\mathscr{L}_{{\rm SL}_2,\fS}$ is closely related to the Teichm\"uller space of $\fS$, which is the space of isotopy classes of complex structures (or complete hyperbolic metrics) on $\fS$. Quantization of various versions of Teichm\"uller spaces has been studied since late 1980's, where the first fundamental results were obtained by Kashaev \cite{Kashaev} and by Chekhov and Fock \cite{F97,CF99}; the latter was generalized by Fock and Goncharov \cite{FG09a,FG09b} to quantization of cluster Poisson varieties, which include ordinary and higher Teichm\"uller spaces of $\fS$ \cite{FG06}. To say briefly, Fock and Goncharov studied a certain stack $\mathscr{X}_{{\rm PGL}_n,\fS}$ that parametrizes ${\rm PGL}_n$-local systems on $\fS$ together with certain extra discrete data at punctures \cite{FG06}. Per each choice of an {\em ideal triangulation} $\lambda$ of $\fS$, which is a collection of unoriented paths in $\fS$ running between punctures that divide $\fS$ into (ideal) triangles, considered up to isotopy, Fock and Goncharov constructed in \cite{FG06} a special coordinate system on $\mathscr{X}_{{\rm PGL}_n,\fS}$. The coordinates are parametrized by the vertices of a special quiver $\Gamma_\lambda$ obtained by gluing the `$n$-triangulation' of each ideal triangle of $\lambda$ (see \S\ref{subsec.quantum_trace_maps} and Figure \ref{Fig;coord_ijk}). It is also shown that $\mathscr{X}_{{\rm PGL}_n,\fS}$ possesses a natural Poisson structure whose Poisson brackets among these special coordinates are encoded by the signed adjacency matrix $Q_\lambda$ of the quiver $\Gamma_\lambda$ which counts the number of arrows between vertices, and that the transition map between the special charts for different ideal triangulations $\lambda$ and $\lambda'$ are given by a specific composition of the formulas called the {\em cluster mutations}. These charts, called the {\em cluster} charts, together with the transition maps, are quantized by certain non-commutative algebras and maps between them. To  each cluster chart for $\lambda$ is associated the quantum torus algebra $\mathcal{X}_q(\fS,\lambda) = \mathcal{X}_q^{{\rm PGL}_n}(\fS,\lambda)$ constructed from $Q_\lambda$, which is a free associative algebra generated by the symbols $X_v$ and their inverses $X_v^{-1}$, with $v$ being the vertices of the quiver $\Gamma_\lambda$, modulo the relations $X_v X_w = q^{2Q_\lambda(v,w)} X_w X_v$. Per each pair of triangulations $\lambda$ and $\lambda'$, is constructed an isomorphism \cite{FG09a,FG09b}
\begin{align}
    \label{intro.Phi_q}
    \Phi^q_{\lambda\lambda'} : {\rm Frac}(\mathcal{X}_q(\fS,\lambda')) \to {\rm Frac}(\mathcal{X}_q(\fS,\lambda))
\end{align}
between the skew-fields of fractions of the quantum torus algebras, as a composition of a special sequence of quantum cluster mutations \cite{BZ}; this sequence consists of $\frac{1}{6}(n^3-n)$ mutations \cite{FG06}. These quantum coordinate change isomorphisms recover the classical transition maps as $q\to 1$, and they satisfy the consistency relation $\Phi^q_{\lambda \lambda'} \Phi^q_{\lambda' \lambda''} = \Phi^q_{\lambda \lambda''}$. One can interpret this as having a consistent system of quantum deformed algebras for the Poisson space $\mathscr{X}_{{\rm PGL}_n,\fS}$.

\vs

For $n=2$, the ${\rm SL}_2$-skein algebra $\mathscr{S}_\omega^{{\rm SL}_2}(\fS)$ which quantizes the space $\mathscr{L}_{{\rm SL}_2,\fS}$ and the above system of quantum torus algebras $\mathcal{X}^{{\rm PGL}_2}_q(\fS,\lambda)$ which quantize the space $\mathscr{X}_{{\rm PGL}_2,\fS}$ were expected to be closely related to each other. It was Bonahon and Wong \cite{BW11} who proved this expectation. One important subtlety is that one needs to consider a square-root version of the quantum torus algebra, $\mathcal{Z}^{{\rm PGL}_2}_\omega(\fS,\lambda)$, generated by $Z_v^{\pm 1}$ for vertices $v$ of $\Gamma_\lambda$, with relations
$Z_v Z_w = \omega^{2 Q_\lambda(v,w)} Z_w Z_v$, in which $\mathcal{X}^{{\rm PGL}_2}_q(\fS,\lambda)$ is embedded as $X_v = Z_v^2$ and $q = \omega^4$. Note that, for $n=2$, the vertices of $\Gamma_\lambda$ is in bijection with the edges of the ideal triangulation $\lambda$. A Laurent monomial $\prod_{v \in \lambda} Z_v^{a_v} \in \mathcal{Z}^{{\rm PGL}_2}_\omega(\fS,\lambda)$, for $a_v \in \mathbb{Z}$, where the product is taken with respect to some chosen order, is said to be {\em ${\rm SL}_2$-balanced} if for each ideal triangle of $\lambda$, the sum of $a_v$ for the edges $v$ forming this triangle is even. Let $\mathcal{Z}^{{\rm bl}; {\rm PGL}_2}_\omega(\fS,\lambda)$ be the subalgebra of $\mathcal{Z}^{{\rm PGL}_2}_\omega(\fS,\lambda)$ generated by the ${\rm SL}_2$-balanced Laurent monomials; it is easy to see that this balanced square-root subalgebra $\mathcal{Z}^{{\rm bl};{\rm PGL}_2}_\omega(\fS,\lambda)$ contains $\mathcal{X}_q^{{\rm PGL}_2}(\fS,\lambda)$. Hiatt \cite{Hiatt,BW11,KLS} constructed an isomorphism
$$
    \Theta^{\omega;{\rm PGL}_2}_{\lambda\lambda'} : {\rm Frac}(\mathcal{Z}^{{\rm bl};{\rm PGL}_2}_\omega(\fS,\lambda')) \to {\rm Frac}(\mathcal{Z}^{{\rm bl};{\rm PGL}_2}_\omega(\fS,\lambda))
$$
between the skew-fields of fractions of the ${\rm SL}_2$-balanced square-root quantum torus algebras, that extends the previously known $\Phi^q_{\lambda\lambda'}$ which is a composition of quantum cluster mutations, and that satisfies the consistency relation $\Theta^{\omega;{\rm PGL}_2}_{\lambda\lambda'}\Theta^{\omega;{\rm PGL}_2}_{\lambda'\lambda''}=\Theta^{\omega;{\rm PGL}_2}_{\lambda\lambda''}$. Bonahon and Wong \cite{BW11} constructed an algebra homomorphism
\begin{align}
    \label{intro.eq1}
{\rm tr}_\lambda^{{\rm SL}_2} : \mathscr{S}_\omega^{{\rm SL}_2}(\fS) \to \mathcal{Z}^{{\rm bl};{\rm PGL}_2}_\omega(\fS,\lambda)
\end{align}
called the {\em ${\rm SL}_2$-quantum trace}, per each ideal triangulation $\lambda$ of $\fS$, that is compatible with the balanced square-root quantum coordinate change maps $\Theta^{\omega;{\rm PGL}_2}_{\lambda\lambda'}$, i.e.
\begin{align}
    \label{intro.eq2}
    {\rm tr}^{{\rm SL}_2}_\lambda = \Theta^{\omega;{\rm PGL}_2}_{\lambda\lambda'} \circ {\rm tr}^{{\rm SL}_2}_{\lambda'};
\end{align}
this compatibility can be interpreted as the {\em naturality} under change of ideal triangulations. The Bonahon-Wong ${\rm SL}_2$-quantum trace has been widely used ever since, in several important applications including the construction of Fock-Goncharov quantum duality map \cite{AK}, and the study of representations of ${\rm SL}_2$-skein algebras \cite{BW16}.

\vs

One major direction of research after Bonahon and Wong's work has been on the ${\rm SL}_n$-generalization, i.e. on the {\em ${\rm SL}_n$-quantum trace maps}
\begin{align}
    \label{intro.tr_SLn}
    {\rm tr}^{{\rm SL}_n}_\lambda : \mathscr{S}^{{\rm SL}_n}_\omega(\fS) \to \mathcal{Z}^{{\rm bl};{\rm PGL}_n}_\omega(\fS,\lambda) \subset \mathcal{Z}^{{\rm PGL}_n}_\omega(\fS,\lambda).
\end{align}
Here $\mathcal{Z}^{{\rm PGL}_n}_\omega(\fS,\lambda)$ is the $n$-th root version of the quantum torus algebra $\mathcal{X}^{{\rm PGL}_n}_q(\fS,\lambda)$, generated by $Z_v^{\pm 1}$ for vertices $v$ of the $n$-triangulation quiver $\Gamma_\lambda$ for an ideal triangulation $\lambda$, modulo the relation $Z_v Z_w = \omega^{2Q_\lambda(v,w)} Z_w Z_v$. The usual Fock-Goncharov quantum (cluster) torus algebra $\mathcal{X}^{{\rm PGL}_n}_q(\fS,\lambda)$ embeds into it as $X_v = Z_v^n$, $q = \omega^{n^2}$. The ${\rm SL}_n$-balanced subalgebra $\mathcal{Z}^{{\rm bl};{\rm PGL}_n}_\omega(\fS,\lambda)$ of $\mathcal{Z}^{{\rm PGL}_2}_\omega(\fS,\lambda)$ is generated by $\prod_v Z_v^{a_v}$ for $(a_v)_v \in \mathbb{Z}^{\mbox{\tiny vertices of $\Gamma_\lambda$}}$ that satisfies a suitable ${\rm SL}_n$-analog of the ${\rm SL}_2$-balancedness condition, so that in particular we have $\mathcal{X}_q^{{\rm PGL}_n}(\fS,\lambda) \subset \mathcal{Z}^{{\rm bl};{\rm PGL}_n}_\omega(\fS,\lambda)$. The first developments were for the case $n=3$ studied by Douglas \cite{Douglas} and the first author \cite{Kim20}, and later L\^e and Yu constructed ${\rm SL}_n$-quantum trace maps for all $n \ge 2$ \cite{LY23}.

\vs

One of the most important sought-for properties of ${\rm tr}_\lambda^{{\rm SL}_n}$ is the naturality under change of ideal triangulations. The naturality should fit into the framework of quantum cluster algebras, in the sense that ${\rm tr}_\lambda^{{\rm SL}_n}$ for different ideal triangulations must be compatible with each other via a balanced $n$-th root quantum coordinate change isomorphism
\begin{align}
\label{intro.Theta_SLn}
\Theta^{\omega;{\rm SL}_n}_{\lambda\lambda'} : {\rm Frac}(\mathcal{Z}^{{\rm bl};{\rm PGL}_n}_\omega(\fS,\lambda')) \to {\rm Frac}(\mathcal{Z}^{{\rm bl};{\rm PGL}_n}_\omega(\fS,\lambda))
\end{align}
that extends $\Phi^q_{\lambda\lambda'}$ in \eqref{intro.Phi_q} which is a composition of $\frac{1}{6}(n^3-n)$ quantum cluster mutations. So one should construct such a map $\Theta^{\omega;{\rm SL}_n}_{\lambda\lambda'}$ and show the naturality
\begin{align}
\label{intro.SLn_naturality}
{\rm tr}_\lambda^{{\rm SL}_n} = \Theta^{\omega;{\rm SL}_n}_{\lambda\lambda'} \circ {\rm tr}^{{\rm SL}_n}_{\lambda'}.
\end{align}
This task is accomplished for $n=3$ by the first author in \cite{Kim21}. For general $n$, L\^e and Yu have a partial result in \cite{LY23}, where they construct a skew-field isomorphism
$$
\overline{\Psi}^X_{\lambda\lambda'} : {\rm Frac}(\mathcal{Z}^{{\rm bl};{\rm PGL}_n}_\omega(\fS,\lambda')) \to {\rm Frac}(\mathcal{Z}^{{\rm bl};{\rm PGL}_n}_\omega(\fS,\lambda))
$$
and show   the naturality
$$
{\rm tr}_\lambda^{{\rm SL}_n} = \overline{\Psi}^X_{\lambda\lambda'} \circ {\rm tr}^{{\rm SL}_n}_{\lambda'}.
$$
However, L\^e and Yu's isomorphism $\overline{\Psi}^X_{\lambda\lambda'}$ is essentially derived from the naturality equation, and its relationship with the quantum cluster isomorphism $\Phi^q_{\lambda\lambda'}$ is not studied, but only suggested \cite[\S14.4]{LY23}.

\vs

Here is the main result of the current paper.
\begin{theorem}[main theorem]
\label{thm.main.intro}
Let $\fS$ be a punctured surface admitting an ideal triangulation.

\begin{enumerate}[label={\rm (\arabic*)}]
    \item (\S\ref{subsec:coordinate_change_isomorphisms_for_change_of_triangulations}) For any two ideal triangulations $\lambda$ and $\lambda'$ of $\fS$, there exists a balanced $n$-th root quantum coordinate change map $\Theta^{\omega;{\rm SL}_n}_{\lambda\lambda'}$ as in \eqref{intro.Theta_SLn}, which is a skew-field isomorphism that extends the map $\Phi^q_{\lambda\lambda'}$ in \eqref{intro.Phi_q}, which in turn is constructed as a composition of $\frac{1}{6}(n^3-n)$ quantum cluster mutations.

    \item (Proposition \ref{prop:Theta_omega_consistency}) The consistency equation $\Theta^{\omega;{\rm SL}_n}_{\lambda\lambda'}\Theta^{\omega;{\rm SL}_n}_{\lambda'\lambda''} = \Theta^{\omega;{\rm SL}_n}_{\lambda\lambda''}$ holds for any three ideal triangulations $\lambda$, $\lambda'$ and $\lambda''$ of $\fS$.

    \item (Theorems \ref{thm-main-compatibility} and \ref{thm-main-compatibility_bl}) The naturality equation ${\rm tr}_\lambda^{{\rm SL}_n} = \Theta^{\omega;{\rm SL}_n}_{\lambda\lambda'} \circ {\rm tr}^{{\rm SL}_n}_{\lambda'}$ in \eqref{intro.SLn_naturality} holds.

    \item (Theorems \ref{thm-main-comparision}) The map $\Theta^{\omega;{\rm SL}_n}_{\lambda\lambda'}$ coincides with L\^e-Yu's map $\overline{\Psi}^X_{\lambda\lambda'}$.
\end{enumerate}

\end{theorem}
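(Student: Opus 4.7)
The overall strategy is to exploit the fact that any two ideal triangulations of $\fS$ are connected by a finite sequence of flips, so the construction of $\Theta^{\omega;\SL}_{\lambda\lambda'}$ reduces to a single flip inside a quadrilateral $\mathbb{P}_4$. For part (1), my plan is to lift the $\frac{1}{6}(n^3-n)$-step quantum cluster mutation sequence defining $\Phi^q_{\lambda\lambda'}$ to the $n$-th root algebra: each individual quantum mutation at a vertex $v$ of the $n$-triangulation quiver is a composition of a monomial automorphism with multiplication by a $q$-binomial Laurent polynomial in $X_v$, and the substitution $X_v = Z_v^n$, $q = \omega^{n^2}$ produces a well-defined analogue on ${\rm Frac}(\mathcal{Z}^{{\rm PGL}_n}_\omega(\fS,\lambda))$. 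The delicate point is to show that the composition restricts to an isomorphism of balanced skew-fields; I will verify this by tracking the $\SL$-balanced condition through each mutation, using that the balancedness cocycle is preserved by a single mutation up to a correction that cancels across a full flip sequence.

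For part (3), the naturality equation ${\rm tr}^{\SL}_\lambda = \Theta^{\omega;\SL}_{\lambda\lambda'} \circ {\rm tr}^{\SL}_{\lambda'}$, I will use L\^e and Sikora's splitting homomorphism, which decomposes $\mathscr{S}_\omega(\fS)$ along an embedded ideal arc. Since L\^e-Yu's quantum trace is compatible with splittings, naturality on $\fS$ reduces to the local case where $\fS=\mathbb{P}_4$ and $\lambda,\lambda'$ differ by the flip of its diagonal. In this local model I will represent both ${\rm tr}^{\SL}_\lambda$ and ${\rm tr}^{\SL}_{\lambda'}$ using the Schrader-Shapiro network dual to the $n$-triangulation, so that the image of an $n$-web under the quantum trace is expressed as a signed sum over non-intersecting path families in the network. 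The flip corresponds to a local move on these dual networks, and the main obstacle will be to show that the $\frac{1}{6}(n^3-n)$-step mutation sequence $\Theta^{\omega;\SL}_{\lambda\lambda'}$ implements precisely this local move at the level of the network-theoretic generating functions.

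The hard part is this quadrilateral comparison. I plan to match each elementary step of the mutation sequence with a concrete local rearrangement of the dual network, in the spirit of Schrader-Shapiro's transport description, so that the total flip is assembled from a prescribed sequence of network moves whose combined effect I can compare against the change of triangulation. By verifying the equation on a spanning family of $n$-webs in $\mathbb{P}_4$, for instance webs formed by oriented strands running between distinct sides of the quadrilateral, I avoid manipulating the $q$-binomial factors in raw form and instead reduce the check to an identity of Lindstr\"om-Gessel-Viennot-type determinants.

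Parts (2) and (4) then follow formally from (1) and (3). For (2), applying (3) twice yields
\[
\Theta^{\omega;\SL}_{\lambda\lambda'}\,\Theta^{\omega;\SL}_{\lambda'\lambda''}\circ {\rm tr}^{\SL}_{\lambda''} = \Theta^{\omega;\SL}_{\lambda\lambda''}\circ {\rm tr}^{\SL}_{\lambda''},
\]
so consistency reduces to showing that the image of ${\rm tr}^{\SL}_{\lambda''}$ generates ${\rm Frac}(\mathcal{Z}^{{\rm bl};\SL}_\omega(\fS,\lambda''))$ as a skew-field. I expect to obtain this generation statement by exhibiting explicit $n$-webs whose images under the quantum trace recover a generating set of balanced Laurent monomials. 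For (4), combining (3) with L\^e-Yu's naturality for $\overline{\Psi}^X_{\lambda\lambda'}$ shows that $\Theta^{\omega;\SL}_{\lambda\lambda'}$ and $\overline{\Psi}^X_{\lambda\lambda'}$ agree on the image of ${\rm tr}^{\SL}_{\lambda'}$, and the same generation argument forces them to coincide on the full balanced skew-field.
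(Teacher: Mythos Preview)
Your overall architecture (reduce to a single flip in $\mathbb{P}_4$ via splitting, then use the Schrader--Shapiro network description of the quantum trace) matches the paper's. There are, however, two genuine gaps in the logical structure you propose.

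First, your plan for (2) and (4) relies on the claim that the image of ${\rm tr}^{\SL}_{\lambda}$ generates ${\rm Frac}(\mathcal{Z}^{{\rm bl};{\rm PGL}_n}_\omega(\fS,\lambda))$ as a skew-field. This is known only when $\fS$ is a polygon (it is the content of \cite[Theorem~11.7]{LY23}, restated in the paper as Lemma~\ref{lem.quantum_trace_on_fractions}); for a general triangulable surface it is open, and exhibiting webs whose traces hit a generating set of balanced monomials is not something one can expect to do by hand. The paper avoids this entirely: consistency (2) is proved \emph{before} and independently of naturality, by the standard argument that the monomial and automorphism parts of each $\nu^\omega_k$ separately satisfy the pentagon and other relations (Proposition~\ref{prop:Theta_omega_consistency}). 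Part (4) is then obtained not by generation on $\fS$ but by cutting $\fS$ into polygons, using that generation \emph{does} hold there, and that both $\Theta^\omega$ and $\overline{\Psi}^X$ are compatible with the splitting maps.

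Second, your plan for (1) to ``track the $\SL$-balanced condition through each mutation'' is more delicate than you suggest. A single intermediate mutation $\nu^\omega_{v_i}$ in the flip sequence need not preserve the balanced lattice $\mathcal{B}_\lambda$, because balancedness is a triangle-by-triangle condition and the intermediate seeds are not associated to any triangulation. The paper's device is to introduce a coarser \emph{mutable-balanced} subalgebra $\mathcal{Z}^{\rm mbl}_\omega(\mathcal{D})$ (Definition~\ref{def.mbl}), defined purely by the divisibility condition $\sum_v Q(u,v)t_v\in n\mathbb{Z}$ at mutable vertices, which \emph{is} manifestly preserved by each $\nu^\omega_k$ (Lemmas~\ref{lem.nu_sharp_well-defined}--\ref{lem.nu_prime_restricts}). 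One then has $\mathcal{Z}^{\rm bl}\subset\mathcal{Z}^{\rm mbl}$ (Lemma~\ref{lem:bl_in_mbl}), constructs $\Theta^\omega_{\lambda\lambda'}$ on ${\rm Frac}(\mathcal{Z}^{\rm mbl})$, proves naturality there, and only \emph{afterwards} (Proposition~\ref{prop.Theta_restricts_to_balanced}) shows the restriction to ${\rm Frac}(\mathcal{Z}^{\rm bl})$---again by cutting to polygons and invoking Lemma~\ref{lem.quantum_trace_on_fractions}. Finally, for the $\mathbb{P}_4$ computation itself, the paper works with single stated corner arcs and single paths, not path families or LGV determinants; the generators $a_{ij},b_{ij},c_{ij}$ suffice (Lemma~\ref{lem-satuared-rd}), and Schrader--Shapiro's single-mutation compatibility (Lemma~\ref{lem.SS_core}) handles each arc directly.
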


Consequently, L\^e and Yu's ${\rm SL}_n$-quantum trace is now proved to fit well in the framework of quantum cluster algebras. We expect that this comprises an important step towards an attempt of using the L\^e-Yu ${\rm SL}_n$-quantum trace to approach the Fock-Goncharov duality conjectures for gauge groups ${\rm SL}_n$ and ${\rm PGL}_n$, classical and quantum \cite{FG06,FG09a}. See \cite{AK} for $n=2$ and \cite{Kim20} for $n=3$.

\subsection{Main ideas}
\label{subsec.main_ideas}

A crucial strategy taken by Bonahon and Wong \cite{BW11} in their study of ${\rm SL}_2$-quantum trace maps is to investigate the compatibility under cutting the surface $\fS$ along an ideal arc, i.e. an edge of an ideal triangulation $\lambda$. This yields a more general kind of surface, called a {\em punctured bordered surface} or a {\em pb surface}, which is obtained from a compact oriented surface with possibly-empty boundary by removing a non-empty finite set of points called punctures, such that each boundary component of the resulting surface is diffeomorphic to an open interval. The {\em stated ${\rm SL}_2$-skein algebra} $\mathscr{S}^{{\rm SL}_2}_\omega(\fS)$ (\S\ref{subsec.reduced_stated_SLn-skein_algebras}) is defined using isotopy classes of framed links and stated framed tangles, which are arcs ending at the boundary walls of $\fS \times (-1,1)$, where the endpoints lying in each boundary wall are required to have distinct heights, i.e. distinct $(-1,1)$-coordinates, equipped with a {\em state} which assigns a value in $\{1,2\}$ to each endpoint. We mod out by the skein relations and also by certain extra relations  regarding the boundary walls, studied by L\^e and collaborators \cite{Le18, CL22}. The Fock-Goncharov space $\mathscr{X}_{{\rm PGL}_2,\fS}$ has a counterpart space $\mathscr{P}_{{\rm PGL}_2,\fS}$ studied by Goncharov and Shen \cite{GS19}, equipped with cluster charts and corresponding quantum algebras, with quantum coordinate change isomorphisms. The statements \eqref{intro.eq1}--\eqref{intro.eq2} still hold for pb surfaces admitting ideal triangulations. Moreover, the compatibility under the cutting of a surface also holds in the following sense. Suppose $\lambda$ is a triangulation of a pb surface $\fS$, and let $\fS_e$ be the pb surface obtained from $\fS$ by cutting along an edge $e$ of $\lambda$ that does not lie in the boundary of $\fS$. Let $\lambda_e$ be the induced ideal triangulation of $\fS_e$. The cutting process yields a natural algebra embedding $\mathcal{Z}_\omega(\fS,\lambda) \to \mathcal{Z}_\omega(\fS_e,\lambda_e)$, which restricts to the map between balanced subalgebras. The cutting also yields a natural map between the stated ${\rm SL}_2$-skein algebras $\mathscr{S}^{{\rm SL}_2}_\omega(\fS) \to \mathscr{S}^{{\rm SL}_2}_\omega(\fS_e)$, called the {\it splitting homomorphism} which makes the following diagram commute \cite{Le18,CL22}
$$
\xymatrix{
\mathscr{S}^{{\rm SL}_2}_\omega(\fS) \ar[r] \ar[d] & \mathcal{Z}^{{\rm bl};{\rm PGL}_2}_\omega(\fS,\lambda) \ar[d] \\ \mathscr{S}^{{\rm SL}_2}_\omega(\fS_e) \ar[r] & \mathcal{Z}^{{\rm bl};{\rm PGL}_2}_\omega(\fS_e,\lambda_e),
}
$$
which more or less completely characterizes the ${\rm SL}_2$-quantum trace maps \cite{Le18}.

\vs

We take this strategy using cutting as well. We use the {\em reduced stated ${\rm SL}_n$-skein algebra} $\overline{\mathscr{S}}^{{\rm SL}_n}_\omega(\fS)$ of L\^e-Yu \cite{LY23} and L\^e-Sikora \cite{LS21} 
(see \S\ref{subsec.reduced_stated_SLn-skein_algebras} of the present paper), defined using framed oriented links and stated framed $n$-webs in $\fS \times (-1,1)$ which are graphs having $1$-valent vertices lying in boundary walls equipped with state values in $\{1,2,\ldots,n\}$ and sink or source $n$-valent vertices lying in the interior, with the skein relations \eqref{w.cross}--\eqref{wzh.four} and boundary relations \eqref{wzh.five}--\eqref{wzh.eight}; here `reduced' means that we quotient out a certain ideal. In \cite{LY23,LS21} the splitting homomorphism of these algebras are studied, and the L\^e-Yu ${\rm SL}_n$-quantum trace is proved to be compatible with the splitting homomorphisms \cite{LY23}. 

\vs

It is well known that any two ideal triangulations are connected by a sequence of elementary moves called {\em flips}, where a flip changes exactly one edge of an ideal triangulation. If $\lambda$ and $\lambda'$ is related by flip at an edge $e$, we focus on the two ideal triangles in $\lambda$ adjacent to $e$, and those in $\lambda'$ adjacent to $e$. Using the splitting homomorphisms and the corresponding compatibility, through cutting along all edges not equal to $e$, the problem of proving the sought-for naturality \eqref{intro.SLn_naturality} boils down to proving it in the case when the surface $\fS$ is a 4-gon, i.e. a closed disc with 4 punctures on the boundary, which admits exactly two ideal triangulations. However, as seen in \cite{Kim21}, proving \eqref{intro.SLn_naturality} for 4-gon directly would still require a heavy computation, already for $n=3$.

\vs

To bypass the difficult computation, we take advantage of another development. L\^e and Yu showed in \cite[Theorem 10.5]{LY23} that the value under their ${\rm SL}_n$-quantum trace ${\rm tr}^{{\rm SL}_n}_\lambda$ of a stated arc, which has only 1-valent vertices but no $n$-valent vertices, in the case when the surface $\fS$ is a triangle, a 3-gon, can be expressed as a sum of Laurent monomials enumerated by paths in a certain directed network (a directed graph) given by dual of the $n$-triangulation. We note that this network is studied first by Schrader and Shapiro \cite{SS17} (see also \cite{CS23}), who also studied a certain compatibility of this sum under a single `mutation' of the network, which in turn is directly related to the cluster mutation of the $n$-triangulation quiver $\Gamma_\lambda$ of Fock and Goncharov (and Shen). We extend L\^e and Yu's result to 4-gon (Lemma \ref{lem.quantum_trace_as_sum_over_paths}), and use Schrader and Shapiro's compatibility repeatedly to show the sought-for naturality \eqref{intro.SLn_naturality} (Proposition \ref{prop-P4-compatibility_new}).

\vs

As a result, we show Theorem \ref{thm.main.intro} for the reduced stated ${\rm SL}_n$-skein algebras for all triangulable pb surfaces. This in particular recovers the results of \cite{Kim21} by a more conceptual proof.

\vs

An interesting future research topic is on the generalization of this non-computational proof of the naturality for more general stated $n$-webs living in the thickened 4-gon, not just stated arcs. It will become an important step in studying the sought-for quantum Fock-Goncharov duality map for ${\rm SL}_n$ and ${\rm PGL}_n$; see \cite{KS} for $n=3$.

\vs

{\bf Acknowledgments.} H.K. (the first author) has been supported by KIAS Individual Grant (MG047204) at Korea Institute for Advanced Study. Z.W.  (the second author) is supported by the NTU research scholarship from the Nanyang Technological University (Singapore) and the
PhD scholarship from the University of Groningen (The Netherlands). The authors thank Thang L\^e for helpful discussions.

\section{Quantum trace maps for stated $\SL$-skein algebras}

Suppose that $R$ is a commutative domain with an invertible element
$\omega^{\frac{1}{2}}$. Set
$$
q = \omega^{n^2}
$$
with
$q^{\frac{1}{2n^2}} = \omega^{\frac{1}{2}}$.  
Define the following constants:
\begin{align*}
\mathbbm{c}_{i}= (-q)^{n-i} q^{\frac{n-1}{2n}},\quad
\mathbbm{t}= (-1)^{n-1} q^{\frac{n^2-1}{n}},\quad 
\mathbbm{a} =   q^{\frac{n+1-2n^2}{4}}.
\end{align*}

\subsection{Reduced stated $\SL$-skein algebras}
\label{subsec.reduced_stated_SLn-skein_algebras}

\def\Si{\fS}

A {\bf punctured bordered surface} (or {\bf pb surface} for simplicity) $\fS$ is obtained from a compact oriented surface $\overline{\Si}$ with possibly-empty boundary by removing finitely many points, which are called punctures, such that every boundary component of $\fS$ is diffeomorphic to an open interval. 

For any positive integer $k$, we use $\mathbb P_k$ to denote the pb surface obtained from the closed disk by removing $k$ punctures from the boundary. We call $\mathbb P_1$ the {\bf monogon}.

Consider the 3-manifold $\Si \times (-1,1)$, the thickened surface. For a point $(x,t) \in \Si \times (-1,1)$, the value $t$ is called the {\bf height} of this point. An {\bf $n$-web} $\alpha$ in $\Si\times(-1,1)$ is a disjoint union of oriented closed curves and a directed finite graph properly embedded into $\Si\times(-1,1)$, satisfying the following requirements:
\begin{enumerate}
    \item $\alpha$ only contains $1$-valent or $n$-valent vertices. Each $n$-valent vertex is a source or a  sink. The set of $1$-valent vertices is denoted as $\partial \alpha$, which are called \textbf{endpoints} of $\alpha$. For any boundary component $c$ of $\Si$, we require that the points of  $\partial\alpha\cap (c\times(-1,1))$ have mutually distinct heights.
    \item Every edge of the graph is an embedded oriented  closed interval  in $\Si\times(-1,1)$.
    \item $\alpha$ is equipped with a continuous transversal \textbf{framing}. 
    \item The set of half-edges at each $n$-valent vertex is equipped with a  cyclic order. 
    \item $\partial \alpha$ is contained in $\partial\Si\times (-1,1)$ and the framing at these endpoints is given by the positive direction of $(-1,1)$.
\end{enumerate}
We will consider $n$-webs up to (ambient) \textbf{isotopy} which are continuous deformations of $n$-webs in their class. 
The empty $n$-web, denoted by $\emptyset$, is also considered as an $n$-web, with the convention that $\emptyset$ is only isotopic to itself. 

A {\bf state} for $\alpha$ is a map $s\colon\partial\alpha\rightarrow \{1,2,\cdots,n\}$. A {\bf stated $n$-web} in $\Si\times(-1,1)$ is an $n$-web equipped with a state.

We say the (stated) $n$-web $\alpha$ is in {\bf vertical position} if 
\begin{enumerate}
    \item the framing at everywhere is given by the positive direction of $(-1,1)$,
    \item $\alpha$ is in general position with respect to the projection  $\text{pr}\colon \Si\times(-1,1)\rightarrow \Si\times\{0\}$,
    \item at every $n$-valent vertex, the cyclic order of half-edges as the image of $\text{pr}$ is given by the positive orientation of $\Si$ (drawn counter-clockwise in pictures).
\end{enumerate}

By regarding $\fS$ as $\fS\times\{0\}$, there is a projection $\text{pr}\colon\fS\times(-1,1)\rightarrow \fS.$
For every (stated) $n$-web $\alpha$, we can isotope $\alpha$ to be in vertical position. For each boundary component $c$ of $\fS$, the heights of $\partial\alpha\cap (c\times(-1,1))$ determine a linear order on  $c\cap \text{pr}(\alpha)$.
Then a {\bf (stated) $n$-web diagram} of $\alpha$ is $\text{pr}(\alpha)$ equipped with the usual over/underpassing information at each double point (called a crossing) and a linear order on $c\cap \text{pr}(\alpha)$ for each boundary component $c$ of $\fS$.

Let $S_n$ denote the permutation group on the set $\{1,2,\cdots,n\}$.

\def\M {M,\cN}

The \textbf{stated $\SL$-skein algebra} $\cS_{\omega}(\fS)$ of $\fS$ is
the quotient module of the $R$-module freely generated by the set 
 of all isotopy classes of stated 
$n$-webs in $\fS\times (-1,1)$ subject to the relations \eqref{w.cross}-\eqref{wzh.eight}:

\beq\label{w.cross}
q^{\frac{1}{n}} 
\raisebox{-.20in}{

\begin{tikzpicture}
\tikzset{->-/.style=

{decoration={markings,mark=at position #1 with

{\arrow{latex}}},postaction={decorate}}}
\filldraw[draw=white,fill=gray!20] (-0,-0.2) rectangle (1, 1.2);
\draw [line width =1pt,decoration={markings, mark=at position 0.5 with {\arrow{>}}},postaction={decorate}](0.6,0.6)--(1,1);
\draw [line width =1pt,decoration={markings, mark=at position 0.5 with {\arrow{>}}},postaction={decorate}](0.6,0.4)--(1,0);
\draw[line width =1pt] (0,0)--(0.4,0.4);
\draw[line width =1pt] (0,1)--(0.4,0.6);
\draw[line width =1pt] (0.4,0.6)--(0.6,0.4);
\end{tikzpicture}
}
- q^{-\frac {1}{n}}
\raisebox{-.20in}{
\begin{tikzpicture}
\tikzset{->-/.style=

{decoration={markings,mark=at position #1 with

{\arrow{latex}}},postaction={decorate}}}
\filldraw[draw=white,fill=gray!20] (-0,-0.2) rectangle (1, 1.2);
\draw [line width =1pt,decoration={markings, mark=at position 0.5 with {\arrow{>}}},postaction={decorate}](0.6,0.6)--(1,1);
\draw [line width =1pt,decoration={markings, mark=at position 0.5 with {\arrow{>}}},postaction={decorate}](0.6,0.4)--(1,0);
\draw[line width =1pt] (0,0)--(0.4,0.4);
\draw[line width =1pt] (0,1)--(0.4,0.6);
\draw[line width =1pt] (0.6,0.6)--(0.4,0.4);
\end{tikzpicture}
}
= (q-q^{-1})
\raisebox{-.20in}{

\begin{tikzpicture}
\tikzset{->-/.style=

{decoration={markings,mark=at position #1 with

{\arrow{latex}}},postaction={decorate}}}
\filldraw[draw=white,fill=gray!20] (-0,-0.2) rectangle (1, 1.2);
\draw [line width =1pt,decoration={markings, mark=at position 0.5 with {\arrow{>}}},postaction={decorate}](0,0.8)--(1,0.8);
\draw [line width =1pt,decoration={markings, mark=at position 0.5 with {\arrow{>}}},postaction={decorate}](0,0.2)--(1,0.2);
\end{tikzpicture}
},
\eeq 
\beq\label{w.twist}
\raisebox{-.15in}{
\begin{tikzpicture}
\tikzset{->-/.style=
{decoration={markings,mark=at position #1 with
{\arrow{latex}}},postaction={decorate}}}
\filldraw[draw=white,fill=gray!20] (-1,-0.35) rectangle (0.6, 0.65);
\draw [line width =1pt,decoration={markings, mark=at position 0.5 with {\arrow{>}}},postaction={decorate}](-1,0)--(-0.25,0);
\draw [color = black, line width =1pt](0,0)--(0.6,0);
\draw [color = black, line width =1pt] (0.166 ,0.08) arc (-37:270:0.2);
\end{tikzpicture}}
= \mathbbm{t}
\raisebox{-.15in}{
\begin{tikzpicture}
\tikzset{->-/.style=
{decoration={markings,mark=at position #1 with
{\arrow{latex}}},postaction={decorate}}}
\filldraw[draw=white,fill=gray!20] (-1,-0.5) rectangle (0.6, 0.5);
\draw [line width =1pt,decoration={markings, mark=at position 0.5 with {\arrow{>}}},postaction={decorate}](-1,0)--(-0.25,0);
\draw [color = black, line width =1pt](-0.25,0)--(0.6,0);
\end{tikzpicture}}
,  
\eeq
\beq\label{w.unknot}
\raisebox{-.20in}{
\begin{tikzpicture}
\tikzset{->-/.style=
{decoration={markings,mark=at position #1 with
{\arrow{latex}}},postaction={decorate}}}
\filldraw[draw=white,fill=gray!20] (0,0) rectangle (1,1);
\draw [line width =1pt,decoration={markings, mark=at position 0.5 with {\arrow{>}}},postaction={decorate}](0.45,0.8)--(0.55,0.8);
\draw[line width =1pt] (0.5 ,0.5) circle (0.3);
\end{tikzpicture}}
= (-1)^{n-1} [n]\ 
\raisebox{-.20in}{
\begin{tikzpicture}
\tikzset{->-/.style=
{decoration={markings,mark=at position #1 with
{\arrow{latex}}},postaction={decorate}}}
\filldraw[draw=white,fill=gray!20] (0,0) rectangle (1,1);
\end{tikzpicture}}
,\ \text{where}\ [n]={\textstyle \frac{q^n-q^{-n}}{q-q^{-1}}}=q^{-n+1}+q^{-n+3}+\cdots+q^{n-1},
\eeq
\beq\label{wzh.four}
\raisebox{-.30in}{
\begin{tikzpicture}
\tikzset{->-/.style=
{decoration={markings,mark=at position #1 with
{\arrow{latex}}},postaction={decorate}}}
\filldraw[draw=white,fill=gray!20] (-1,-0.7) rectangle (1.2,1.3);
\draw [line width =1pt,decoration={markings, mark=at position 0.5 with {\arrow{>}}},postaction={decorate}](-1,1)--(0,0);
\draw [line width =1pt,decoration={markings, mark=at position 0.5 with {\arrow{>}}},postaction={decorate}](-1,0)--(0,0);
\draw [line width =1pt,decoration={markings, mark=at position 0.5 with {\arrow{>}}},postaction={decorate}](-1,-0.4)--(0,0);
\draw [line width =1pt,decoration={markings, mark=at position 0.5 with {\arrow{<}}},postaction={decorate}](1.2,1)  --(0.2,0);
\draw [line width =1pt,decoration={markings, mark=at position 0.5 with {\arrow{<}}},postaction={decorate}](1.2,0)  --(0.2,0);
\draw [line width =1pt,decoration={markings, mark=at position 0.5 with {\arrow{<}}},postaction={decorate}](1.2,-0.4)--(0.2,0);
\node  at(-0.8,0.5) {$\vdots$};
\node  at(1,0.5) {$\vdots$};
\end{tikzpicture}}=(-q)^{\frac{n(n-1)}{2}}\cdot \sum_{\sigma\in S_n}
(-q^{\frac{1-n}n})^{\ell(\sigma)} \raisebox{-.30in}{
\begin{tikzpicture}
\tikzset{->-/.style=
{decoration={markings,mark=at position #1 with
{\arrow{latex}}},postaction={decorate}}}
\filldraw[draw=white,fill=gray!20] (-1,-0.7) rectangle (1.2,1.3);
\draw [line width =1pt,decoration={markings, mark=at position 0.5 with {\arrow{>}}},postaction={decorate}](-1,1)--(0,0);
\draw [line width =1pt,decoration={markings, mark=at position 0.5 with {\arrow{>}}},postaction={decorate}](-1,0)--(0,0);
\draw [line width =1pt,decoration={markings, mark=at position 0.5 with {\arrow{>}}},postaction={decorate}](-1,-0.4)--(0,0);
\draw [line width =1pt,decoration={markings, mark=at position 0.5 with {\arrow{<}}},postaction={decorate}](1.2,1)  --(0.2,0);
\draw [line width =1pt,decoration={markings, mark=at position 0.5 with {\arrow{<}}},postaction={decorate}](1.2,0)  --(0.2,0);
\draw [line width =1pt,decoration={markings, mark=at position 0.5 with {\arrow{<}}},postaction={decorate}](1.2,-0.4)--(0.2,0);
\node  at(-0.8,0.5) {$\vdots$};
\node  at(1,0.5) {$\vdots$};
\filldraw[draw=black,fill=gray!20,line width =1pt]  (0.1,0.3) ellipse (0.4 and 0.7);
\node  at(0.1,0.3){$\sigma_{+}$};
\end{tikzpicture}},
\eeq
where the ellipse enclosing $\sigma_+$  is the minimum crossing positive braid representing a permutation $\sigma\in S_n$ and $\ell(\sigma)=\#\{(i,j)\mid 1\leq i<j\leq n,\ \sigma(i)>\sigma(j)\}$ is the length of $\sigma\in S_n$.

\beq\label{wzh.five}
   \raisebox{-.30in}{
\begin{tikzpicture}
\tikzset{->-/.style=
{decoration={markings,mark=at position #1 with
{\arrow{latex}}},postaction={decorate}}}
\filldraw[draw=white,fill=gray!20] (-1,-0.7) rectangle (0.2,1.3);
\draw [line width =1pt](-1,1)--(0,0);
\draw [line width =1pt](-1,0)--(0,0);
\draw [line width =1pt](-1,-0.4)--(0,0);
\draw [line width =1.5pt](0.2,1.3)--(0.2,-0.7);
\node  at(-0.8,0.5) {$\vdots$};
\filldraw[fill=white,line width =0.8pt] (-0.5 ,0.5) circle (0.07);
\filldraw[fill=white,line width =0.8pt] (-0.5 ,0) circle (0.07);
\filldraw[fill=white,line width =0.8pt] (-0.5 ,-0.2) circle (0.07);
\end{tikzpicture}}
   = 
   \mathbbm{a} \sum_{\sigma \in S_n} (-q)^{\ell(\sigma)}\,  \raisebox{-.30in}{
\begin{tikzpicture}
\tikzset{->-/.style=
{decoration={markings,mark=at position #1 with
{\arrow{latex}}},postaction={decorate}}}
\filldraw[draw=white,fill=gray!20] (-1,-0.7) rectangle (0.2,1.3);
\draw [line width =1pt](-1,1)--(0.2,1);
\draw [line width =1pt](-1,0)--(0.2,0);
\draw [line width =1pt](-1,-0.4)--(0.2,-0.4);
\draw [line width =1.5pt,decoration={markings, mark=at position 1 with {\arrow{>}}},postaction={decorate}](0.2,1.3)--(0.2,-0.7);
\node  at(-0.8,0.5) {$\vdots$};
\filldraw[fill=white,line width =0.8pt] (-0.5 ,1) circle (0.07);
\filldraw[fill=white,line width =0.8pt] (-0.5 ,0) circle (0.07);
\filldraw[fill=white,line width =0.8pt] (-0.5 ,-0.4) circle (0.07);
\node [right] at(0.2,1) {$\sigma(n)$};
\node [right] at(0.2,0) {$\sigma(2)$};
\node [right] at(0.2,-0.4){$\sigma(1)$};
\end{tikzpicture}},
\eeq
\beq \label{wzh.six}
\raisebox{-.20in}{
\begin{tikzpicture}
\tikzset{->-/.style=
{decoration={markings,mark=at position #1 with
{\arrow{latex}}},postaction={decorate}}}
\filldraw[draw=white,fill=gray!20] (-0.7,-0.7) rectangle (0,0.7);
\draw [line width =1.5pt,decoration={markings, mark=at position 1 with {\arrow{>}}},postaction={decorate}](0,0.7)--(0,-0.7);
\draw [color = black, line width =1pt] (0 ,0.3) arc (90:270:0.5 and 0.3);
\node [right]  at(0,0.3) {$i$};
\node [right] at(0,-0.3){$j$};
\filldraw[fill=white,line width =0.8pt] (-0.5 ,0) circle (0.07);
\end{tikzpicture}}   = \delta_{\bar j,i }\,  \mathbbm{c}_{i} \raisebox{-.20in}{
\begin{tikzpicture}
\tikzset{->-/.style=
{decoration={markings,mark=at position #1 with
{\arrow{latex}}},postaction={decorate}}}
\filldraw[draw=white,fill=gray!20] (-0.7,-0.7) rectangle (0,0.7);
\draw [line width =1.5pt](0,0.7)--(0,-0.7);
\end{tikzpicture}},
\eeq
\beq \label{wzh.seven}
\raisebox{-.20in}{
\begin{tikzpicture}
\tikzset{->-/.style=
{decoration={markings,mark=at position #1 with
{\arrow{latex}}},postaction={decorate}}}
\filldraw[draw=white,fill=gray!20] (-0.7,-0.7) rectangle (0,0.7);
\draw [line width =1.5pt](0,0.7)--(0,-0.7);
\draw [color = black, line width =1pt] (-0.7 ,-0.3) arc (-90:90:0.5 and 0.3);
\filldraw[fill=white,line width =0.8pt] (-0.55 ,0.26) circle (0.07);
\end{tikzpicture}}
= \sum_{i=1}^n  (\mathbbm{c}_{\bar i})^{-1}\, \raisebox{-.20in}{
\begin{tikzpicture}
\tikzset{->-/.style=
{decoration={markings,mark=at position #1 with
{\arrow{latex}}},postaction={decorate}}}
\filldraw[draw=white,fill=gray!20] (-0.7,-0.7) rectangle (0,0.7);
\draw [line width =1.5pt,decoration={markings, mark=at position 1 with {\arrow{>}}},postaction={decorate}](0,0.7)--(0,-0.7);
\draw [line width =1pt](-0.7,0.3)--(0,0.3);
\draw [line width =1pt](-0.7,-0.3)--(0,-0.3);
\filldraw[fill=white,line width =0.8pt] (-0.3 ,0.3) circle (0.07);
\filldraw[fill=black,line width =0.8pt] (-0.3 ,-0.3) circle (0.07);
\node [right]  at(0,0.3) {$i$};
\node [right]  at(0,-0.3) {$\bar{i}$};
\end{tikzpicture}} \quad \mbox{where $\bar{i} = n+1-i$,}
\eeq
\beq\label{wzh.eight}
\raisebox{-.20in}{

\begin{tikzpicture}
\tikzset{->-/.style=

{decoration={markings,mark=at position #1 with

{\arrow{latex}}},postaction={decorate}}}
\filldraw[draw=white,fill=gray!20] (-0,-0.2) rectangle (1, 1.2);
\draw [line width =1.5pt,decoration={markings, mark=at position 1 with {\arrow{>}}},postaction={decorate}](1,1.2)--(1,-0.2);
\draw [line width =1pt](0.6,0.6)--(1,1);
\draw [line width =1pt](0.6,0.4)--(1,0);
\draw[line width =1pt] (0,0)--(0.4,0.4);
\draw[line width =1pt] (0,1)--(0.4,0.6);
\draw[line width =1pt] (0.4,0.6)--(0.6,0.4);
\filldraw[fill=white,line width =0.8pt] (0.2 ,0.2) circle (0.07);
\filldraw[fill=white,line width =0.8pt] (0.2 ,0.8) circle (0.07);
\node [right]  at(1,1) {$i$};
\node [right]  at(1,0) {$j$};
\end{tikzpicture}
} =q^{-\frac{1}{n}}\left(\delta_{{j<i} }(q-q^{-1})\raisebox{-.20in}{

\begin{tikzpicture}
\tikzset{->-/.style=

{decoration={markings,mark=at position #1 with

{\arrow{latex}}},postaction={decorate}}}
\filldraw[draw=white,fill=gray!20] (-0,-0.2) rectangle (1, 1.2);
\draw [line width =1.5pt,decoration={markings, mark=at position 1 with {\arrow{>}}},postaction={decorate}](1,1.2)--(1,-0.2);
\draw [line width =1pt](0,0.8)--(1,0.8);
\draw [line width =1pt](0,0.2)--(1,0.2);
\filldraw[fill=white,line width =0.8pt] (0.2 ,0.8) circle (0.07);
\filldraw[fill=white,line width =0.8pt] (0.2 ,0.2) circle (0.07);
\node [right]  at(1,0.8) {$i$};
\node [right]  at(1,0.2) {$j$};
\end{tikzpicture}
}+q^{\delta_{i,j}}\raisebox{-.20in}{

\begin{tikzpicture}
\tikzset{->-/.style=

{decoration={markings,mark=at position #1 with

{\arrow{latex}}},postaction={decorate}}}
\filldraw[draw=white,fill=gray!20] (-0,-0.2) rectangle (1, 1.2);
\draw [line width =1.5pt,decoration={markings, mark=at position 1 with {\arrow{>}}},postaction={decorate}](1,1.2)--(1,-0.2);
\draw [line width =1pt](0,0.8)--(1,0.8);
\draw [line width =1pt](0,0.2)--(1,0.2);
\filldraw[fill=white,line width =0.8pt] (0.2 ,0.8) circle (0.07);
\filldraw[fill=white,line width =0.8pt] (0.2 ,0.2) circle (0.07);
\node [right]  at(1,0.8) {$j$};
\node [right]  at(1,0.2) {$i$};
\end{tikzpicture}
}\right),
\eeq
where   
$\delta_{j<i}= 
\begin{cases}
1  & j<i\\
0 & \text{otherwise}
\end{cases},\ 
\delta_{i,j}= 
\begin{cases} 
1  & i=j\\
0  & \text{otherwise}
\end{cases}$, 
and small white dots represent an arbitrary orientation of the edges (left-to-right or right-to-left), consistent for the entire equation. The black dot represents the opposite orientation. When a boundary edge of a shaded area is directed, the direction indicates the height order of the endpoints of the diagrams on that directed line, where going along the direction increases the height, and the involved endpoints are consecutive in the height order. The height order outside the drawn part can be arbitrary.

 \def \Sv{\cS_n(\Si,\mathbbm{v})}

 The algebra structure for $\cS_{\omega}(\fS)$ is given by stacking the stated $n$-webs, i.e. for any two stated $n$-webs $\alpha,\alpha'
 \subset\fS\times(-1,1)$, the product $\alpha\alpha'$ is defined by stacking $\alpha$ above $\alpha'$. That is, if $\alpha \subset \fS\times(0,1)$ and $\alpha' \subset \fS\times (-1,0)$, we have $\alpha \alpha' = \alpha \cup \alpha'$.

For a boundary puncture $p$ of a pb surface $\fS$, 
the {\bf corner arcs} $C(p)_{ij}$ and $\cev{C}(p)_{ij}$ are the stated arcs depicted as in Figure \ref{Fig;badarc}.
For a boundary puncture $p$ which is not on a monogon component of $\fS$, set 
$$C_p=\{C(p)_{ij}\mid i<j\},\quad\cev{C}_p=\{\cev{C}(p)_{ij}\mid i<j\}.$$  
Each element of $C_p\cup \cev{C}_p$ is called a \emph{bad arc} at $p$. 
\begin{figure}[h]
    \centering
    \includegraphics[width=150pt]{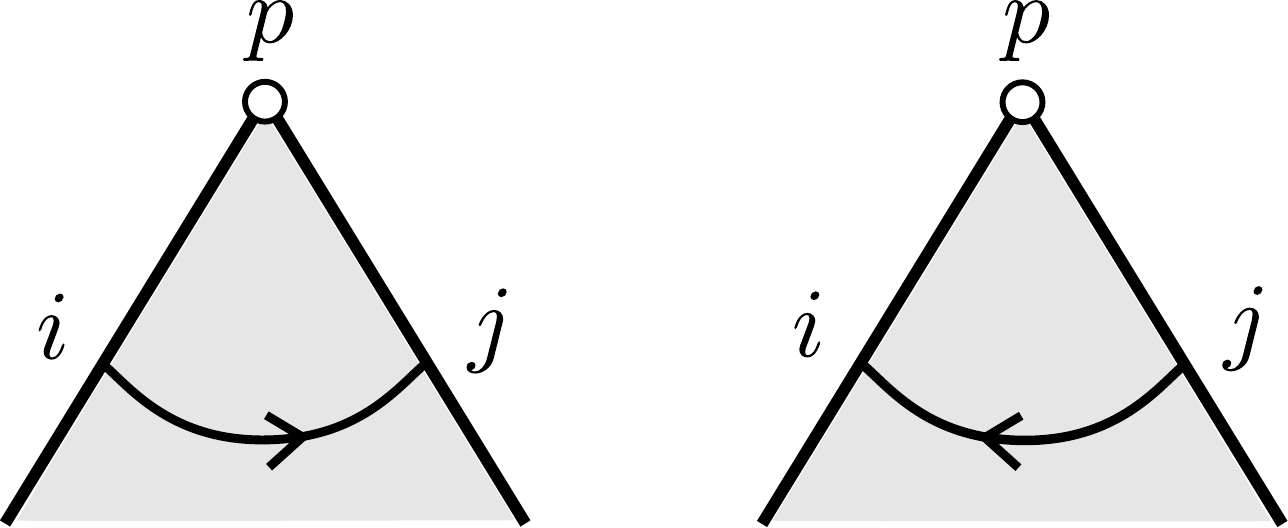}
    \caption{The left is $C(p)_{ij}$ and the right is $\cev{C}(p)_{ij}$.}\label{Fig;badarc}
\end{figure}

For a pb surface $\fS$, $$\overline \cS_{\omega}(\fS) = \cS_{\omega}(\fS)/I^{\text{bad}}$$ 
is called the \textbf{reduced stated $\SL$-skein algebra}, defined in \cite{LY23}, where $I^{\text{bad}}$ is the two-sided ideal of $\cS_{\omega}(\fS)$ generated by all bad arcs.

\subsection{The splitting homomorphism}\label{sub-splitting}
\def\cut{\mathsf{Cut}}
\def\pr{{\bf pr}}

Let $\fS$ be a pb surface. An embedding $e:(0,1)\rightarrow \fS$ is called an {\bf ideal arc} if both
$\bar e(0)$ and $\bar e(1)$ are punctures, where $\bar e\colon [0,1] \to \overline{\fS}$ is the `closure' of $e$; we often consider an ideal arc $e$ up to isotopy, and often identify $e$ with its image.
Let $e$ be an 
ideal arc of a pb surface $\fS$ such that it is contained in the interior of $\fS$. After cutting $\fS$ along $e$, we get a new pb surface $\cut_e(\fS)$, which has two copies $e_1,e_2$ for $e$ such that 
${\fS}= \cut_e(\fS)/(e_1=e_2)$. We use $\pr_e$ to denote the projection from $\cut_e(\fS)$ to $\fS$.  Suppose that $\alpha$ is  a stated $n$-web diagram in $\fS$, which is transverse to $e$.
Let $s$ be a map from $e\cap\alpha$ to $\{1,2,\cdots,n\}$, and let $h$ be a linear order on $e\cap\alpha$. Then there is a lift diagram $\alpha(h,s)$ for a stated $n$-web in $\cut_e(\fS)$. 
 For $i=1,2$, the heights of the endpoints of $\alpha(h,s)$ on $e_i$ are induced by $h$ (via $\pr_e$), and the states of the endpoints of $\alpha(h,s)$ on $e_i$ are induced by $s$ (via $\pr_e$).
Then the splitting 
homomorphism
$$
\mathbb{S}_e : \mathscr{S}_\omega(\fS) \to \mathscr{S}_\omega(\cut_e(\fS))
$$
is defined by 
\begin{align}\label{eq-def-splitting}
    \mathbb S_e(\alpha) =\sum_{s\colon \alpha \cap e \to \{1,\cdots, n\}} \alpha(h, s). 
\end{align}
Furthermore $\mathbb S_e$ is an $R$-algebra homomorphism \cite{LS21}.

We can observe that $\mathbb S_e$ sends bad arcs to bad arcs.
So it induces an algebra homomorphism from $\rdS$
to $\rS(\cut_e(\fS))$, which is still denoted as $\mathbb S_e$.
When there is no confusion we will omit the subscript $e$ 
from $\mathbb S_e$.

\subsection{Quantum trace maps}
\label{subsec.quantum_trace_maps}

In this subsection we review the materials in \cite{LY23} about the quantum trace map, which is a homomorphism from the stated $\SL$-skein algebra of a `triangulable' pb surface $\fS$ to a quantum torus algebra associated to a special quiver built from any choice of an `(ideal) triangulation' of $\fS$.

To begin with, consider barycentric coordinates for an ideal triangle $\bP_3$ so that
\begin{equation}
\bP_3=\{(i,j,k)\in\bR^3\mid i,j,k\geq 0,\ i+j+k=n\}\setminus\{(0,0,n),(0,n,0),(n,0,0)\}, 
\end{equation}
where $(i,j,k)$ (or $ijk$ for simplicity) are the barycentric coordinates. 
Let $v_1=(n,0,0)$, $v_2=(0,n,0)$, $v_3=(0,0,n)$. 
Let $e_i$ denote the edge on $\partial \bP_3$ whose endpoints are $v_i$ and $v_{i+1}$. See Figure \ref{Fig;coord_ijk}. 

\begin{figure}[h]
    \centering
    \includegraphics[width=260pt]{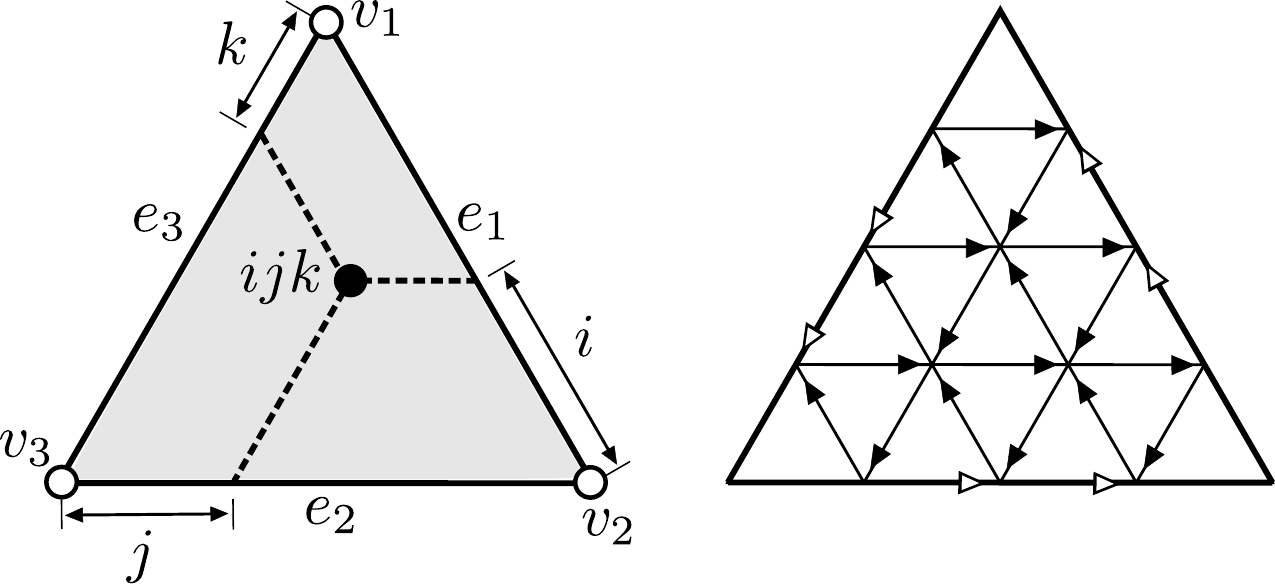}
    \caption{Barycentric coordinates $ijk$ and a $4$-triangulation with its quiver}\label{Fig;coord_ijk}
\end{figure}

The \textbf{$n$-triangulation} of $\bP_3$ is the subdivision of $\bP_3$ into $n^2$ small triangles using lines $i,j,k=\text{constant integers}$. 
For the $n$-triangulation of $\bP_3$, the vertices and edges of all small triangles, except for $v_1,v_2,v_3$ and the small edges adjacent to them, form a quiver $\Gamma_{\bP_3}$.
An \textbf{arrow} is the direction of a small edge defined as follows. If a small edge $e$ is in the boundary $\partial\bP_3$ then $e$ has the counterclockwise direction of $\partial \bP_3$. If $e$ is interior then its direction is the same with that of a boundary edge of $\bP_3$ parallel to $e$. Assign weight $\frac{1}{2}$ to any boundary arrow and weight $1$ to any interior arrow.

\def\bZ{\mathbb Z}

Let $V_{\bP_3}$ be the set of all
points with integer barycentric coordinates of $\bP_3$:
\begin{align}
V_{\bP_3} = \{ijk \in \bP_3 \mid i, j, k \in \bZ\}.
\end{align}
Elements of $V_{\bP_3}$ are called \textbf{small vertices}, and small vertices on the boundary of $\bP_3$ are called the \textbf{edge vertices}. 

A pb surface $\fS$ is {\bf triangulable} if each component of $\fS$ has at least one puncture, and 
no component of 
$\fS$ is 
one the following cases:  
\begin{enumerate}[label={\rm (\arabic*)}]
    \item the monogon $\mathbb{P}_1$, 

    \item the bigon $\mathbb{P}_2$,

    \item once or twice punctured sphere.
\end{enumerate}

A {\bf triangulation} $\lambda$ of $\fS$ is a collection of disjoint ideal arcs in $\fS$ with the following properties: (1) any two arcs in $\lambda$ are not isotopic; (2) $\lambda$ is maximal under condition (1); (3) every puncture is adjacent to at least two ideal arcs.
Our definition of triangulation excludes the so-called self-folded triangles.
We will call each ideal arc in $\lambda$ an {\bf edge} of $\lambda$.
If an edge is isotopic to a boundary component of $\fS$, we call such an edge a {\bf boundary edge}.
We use $\mathbb F_{\lambda}$ to denote the set of faces after we cut $\fS$ along all ideal arcs in $\lambda$ that are not boundary edges.
It is well-known that any triangulable surface admits a triangulation.

Suppose that $\fS$ is a triangulable pb surface with a triangulation $\lambda$.
By cutting $\fS$ along all edges not isotopic to a boundary edge, we have a disjoint union of ideal triangles. Each triangle is called a \textbf{face} of $\lambda$. Then
\begin{equation}\label{eq.glue}
\fS = \Big( \bigsqcup_{\tau\in\mathbb F_\lambda} \tau \Big) /\sim,
\end{equation}
where each face $\tau$ is regarded as a copy of $\bP_3$, and $\sim$ is the identification of edges of the faces to recover $\lambda$. 
Each face $\tau$ is characterized by a \textbf{characteristic map} $f_\tau\colon \mathbb P_3 \to \fS$, which is a homeomorphism when we restrict $f_\tau$ to $\Int\mathbb P_3$ or the interior of each edge of $\mathbb P_3$.

An \textbf{$n$-triangulation} of $\lambda$ is a collection of $n$-triangulations of the faces $\tau$ which are compatible with the gluing $\sim$,  where compatibility means, for any edges $b$ and $b'$ glued via $\sim$, the vertices on $b$ and $b'$ are identified. Define
$$V_\lambda=\bigcup_{\tau\in\mathbb F_\lambda} V_\tau, \quad V_\tau=f_\tau(V_{\mathbb P_3}).$$
The images of the weighted quivers $\Gamma_{\mathbb P_3}$ by $f_\tau$ form a quiver $\Gamma_\lambda$ on $\fS$ obtained by gluing. 
For any two $v,v'\in V_\lambda$, define
$$
a_\lambda(v,v') = \begin{cases} w \quad & \text{if there is an arrow from $v$ to $v'$ of weight $w$},\\
0 &\text{if there is no arrow between $v$ and $v'$.} 
\end{cases}$$
Let $Q_\lambda\colon V_\lambda\times V_\lambda \to \bZ$ be the signed adjacency matrix of the weighted quiver $\Gamma_\lambda$ defined by 
\begin{equation}
Q_\lambda(v,v') = a_\lambda(v,v') - a_\lambda(v',v).
\end{equation}
What matters about the quiver $\Gamma_\lambda$ is only the signed adjacency matrix $Q_\lambda$, so we may tidy up the quiver by canceling or adding the weighted arrows in $\Gamma_\lambda$ while maintaining $Q_\lambda$, so that between each pair of vertices there is only one arrow (with some weight).

We use $Q_{\bP_3}$ to denote $Q_\lambda$ when $\fS=\bP_3$.  

\def\bT{\mathbb T}

\def\tr{\overline{\rm tr}_{\lambda}}
\def\X{\mathcal X_{\omega}(\fS,\lambda)}
\def\Y{\mathcal Y_{q}(\fS,\lambda)}

\def\bX{\mathcal Z_{\omega}^{\rm bl}(\fS,\lambda)}
\def\bk{{\bf k}}
\def\V{V_\lambda}

The \textbf{Fock-Goncharov algebra} is the quantum torus algebra associated to $Q_\lambda$, i.e.:
\begin{equation}
\mathcal{X}_{q}(\fS,\lambda)= \bT_q(Q_\lambda) = R \langle 
X_v^{\pm 1}, v \in V_\lambda \rangle / (
X_v 
X_{v'}= q^{\, 2 Q_\lambda(v,v')} 
X_{v'} 
X_v \text{ for } v,v'\in V_\lambda ).
\end{equation}
The \textbf{$n$-th root Fock-Goncharov algebra} is the quantum torus algebra defined as follows:
\begin{equation}
\mathcal{Z}_{\omega}(\fS,\lambda)= \bT_{\omega}(Q_\lambda) = R \langle 
Z_v^{\pm 1}, v \in V_\lambda \rangle / (
Z_v 
Z_{v'}= \omega^{\, 2 Q_\lambda(v,v')} 
Z_{v'} 
Z_v \text{ for } v,v'\in V_\lambda ).
\end{equation}

Recall that 
$q=\omega^{n^2}$. There is an algebra embedding
from 
$\mathcal{X}_q(\fS,\lambda)$ to 
$\mathcal{Z}_\omega(\fS,\lambda)$ given by 
$$X_v \mapsto Z_v^n$$
for $v\in \V$.
We will regard 
$\mathcal{X}_q(\fS,\lambda)$ as a subalgebra of 
$\mathcal{Z}_\omega(\fS,\lambda)$.
For any integer $k\in\bZ$ and $v\in\V$, we will regard the symbol
$(
X_v)^{\frac{k}{n}}$ as referring to $
(Z_v)^k$.

Let ${\bf k}_i\colon V_{\bP_3} \to\bZ\ (i=1,2,3)$ be the functions defined by
\begin{equation}
{\bf k}_1(ijk)=i,\quad {\bf k}_2(ijk)=j,\quad {\bf k}_3(ijk)=k.\label{def:proj}
\end{equation} 
Let $\mathcal B_{\bP_3}\subset\bZ^{V_{\bP_3}}$ be the subgroup generated by ${\bf k}_1,{\bf k}_2,{\bf k}_3$ and $(n\bZ)^{V_{\bP_3}}$. Elements in $\mathcal{B}_{\bP_3}$ are called \textbf{balanced}. 

A vector $\bk\in\bZ^{\V}$ is \textbf{balanced} if its pullback ${\bf k}_\tau:=f_\tau^\ast\bk$ to ${\bP_3}$ is balanced for every face of $\lambda$, where for every face $\tau$ and its characteristic map  $f_\tau\colon\mathbb P_3\to\fS$, the pullback $f_\tau^\ast\bk$ is a vector $\V\to\bZ$ given by $f_\tau^\ast\bk(v)=\bk(f_\tau(v))$. 
Let 
\begin{align}
    \label{B_lambda}
    \mathcal{B}_\lambda = \mbox{the subgroup of $\mathbb{Z}^{V_\lambda}$ generated by all the balanced vectors.}
\end{align}

\def\lt{\text{lt}}

The \textbf{balanced Fock-Goncharov algebra} (\cite[\S11.5]{LY23}) is defined as the monomial subalgebra
\begin{align}
    \label{Z_bl_omega}
\bX=\text{span}_R\{Z^\bk\mid \bk\in\mathcal B_\lambda\}\subset
\mathcal{Z}_\omega(\fS,\lambda),
\end{align}
where $Z^\bk$ is as defined in \eqref{eq-weyl-order-Z}; if ${\bf k} = (k_v)_{v\in V_\lambda}$, then $Z^{\bf k}$ is given by $\omega^{m/2} \prod_{v\in V_\lambda} Z^{k_v}$, where the product is taken with respect to any chosen ordering on $V_\lambda$, and $m$ is a suitable integer. 
It is easy to verify that $
\mathcal{X}_q(\fS,\lambda) \subset \bX$, and that $\{Z^\bk\mid \bk\in\mathcal B_\lambda\}$ is an $R$-basis of $\bX$.

Suppose that $v,v'\in V_\lambda$, If $v$ and $v'$ are not contained in the same boundary edge, define $H_\lambda(v,v')=Q_\lambda(v,v')$.
If $v$ and $v'$ are contained in the same boundary edge, define $$H_\lambda(v,v')=\begin{cases}
    -1 & \text{when $v=v'$},\\
    1 & \text{when there is an arrow from $v$ to $v'$},\\
    0 & \text{otherwise.}
\end{cases}$$
\begin{remark}
    Note that $Q_\lambda$ equals $\frac{1}{2}\overline{\mathsf{Q}}_\lambda$ in \cite{LY23} and $H_\lambda$ equals $-\overline{\mathsf{H}}_\lambda$ defined in \cite[\S11.7]{LY23}.
    There are typos for the definition of 
    $\overline{\mathsf{H}}_\lambda$ in \cite[\S11.7]{LY23}; the value of $\overline{\mathsf{H}}_\lambda(v,v')$
    when $v$ and $v'$ are contained in the same boundary edge  
    should be corrected to    $$\overline{\mathsf{H}}_\lambda(v,v')=\begin{cases}
    1 & \text{when $v=v'$},\\
    -1 & \text{when there is an arrow from $v$ to $v'$},\\
    0 & \text{otherwise.}
\end{cases}$$
In \cite[\S11.7]{LY23}, the authors misplaced the positions for $-1$ and $1$.
We confirmed this with Thang L\^e, one of the authors of \cite{LY23}.
\end{remark}

\begin{lemma}\cite[Proposition 11.10]{LY23}\label{lem-balanced-H}
    If a vector ${\bf k}\in\mathbb Z^{V_\lambda}$, considered as a row vector, is balanced, then ${\bf k} H_\lambda\in (n\mathbb Z)^{V_\lambda}$.
\end{lemma}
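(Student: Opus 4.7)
The plan is to reduce the claim to a local statement on a single ideal triangle $\bP_3$, where it becomes a direct combinatorial computation.

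First, for each face $\tau\in\mathbb F_\lambda$ with characteristic map $f_\tau\colon\bP_3\to\fS$, I would introduce a local $V_{\bP_3}\times V_{\bP_3}$ matrix $H^{(\tau)}$: apply the boundary $H$-correction (diagonal $-1$, $+1$ along an arrow, $0$ otherwise) only on those edges of $\bP_3$ that are mapped by $f_\tau$ to a boundary edge of $\fS$, and use $Q_{\bP_3}$ on all other pairs. The key identity to verify is
\[
H_\lambda(v,v') \;=\; \sum_{\tau\,:\,v,v'\in V_\tau} H^{(\tau)}\!\bigl(f_\tau^{-1}(v),\,f_\tau^{-1}(v')\bigr).
\]
This is routine in most cases; the only nontrivial one is when $v,v'$ lie on an interior edge of $\lambda$ shared by two faces, where the two weight-$\tfrac{1}{2}$ boundary arrows of the respective $\bP_3$'s have opposite orientations once embedded in $\fS$ and cancel, giving $Q_\lambda(v,v')=0=H_\lambda(v,v')$.

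Combining this identity with the balancedness of ${\bf k}$ (equivalently, $f_\tau^*{\bf k}\in\mathcal B_{\bP_3}$ for every face $\tau$) yields
\[
({\bf k} H_\lambda)_v \;=\; \sum_{\tau\,:\,v\in V_\tau}\bigl((f_\tau^*{\bf k})\cdot H^{(\tau)}\bigr)_{f_\tau^{-1}(v)},
\]
so the global claim reduces to the local statement: for any balanced ${\bf k}'\in\mathcal B_{\bP_3}$ and any $w\in V_{\bP_3}$, one has $({\bf k}' H^{(\tau)})_w\in n\mathbb Z$. By linearity, and since the contributions from $(n\mathbb Z)^{V_{\bP_3}}$ are obviously divisible, it suffices to check this for ${\bf k}'\in\{{\bf k}_1,{\bf k}_2,{\bf k}_3\}$.

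Finally, by the evident $S_3$-symmetry of $\bP_3$ I restrict to ${\bf k}'={\bf k}_1$ and proceed by direct computation. For each $w=(p,q,r)\in V_{\bP_3}$ I expand $({\bf k}_1 H^{(\tau)})_w=\sum_{w'} w'_1\, H^{(\tau)}(w',w)$ via the explicit list of small-edge neighbors of $w$ in $\Gamma_{\bP_3}$, separating the cases where $w$ is interior to $\bP_3$, on the interior of a boundary edge, or adjacent to a corner, and further enumerating the possible ``decorations'' specifying which edges of $\bP_3$ carry the $H$-correction. In each case the weight-$1$ interior arrows, the weight-$\tfrac{1}{2}$ boundary arrows, and the diagonal $-1$ from the $H$-correction combine into a multiple of $n$; the diagonal $-1$ is precisely what is needed to absorb the half-integer residues coming from boundary arrows. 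The main obstacle is the bookkeeping in the first step, namely verifying that the half-weight-arrow cancellation on glued edges exactly matches the difference between $H_\lambda$ and the naive face-wise sum; once this identity is pinned down, the rest is a short enumeration based on the explicit structure of the $n$-triangulation.
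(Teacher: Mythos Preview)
Your reduction-to-faces strategy is the right idea, but the local statement you reduce to is \emph{false} as formulated, so the argument has a genuine gap. The problem is with your matrix $H^{(\tau)}$: on an edge of $\bP_3$ that is mapped to an \emph{interior} edge of $\fS$ you use $Q_{\bP_3}$, with no diagonal $-1$. At a vertex $w$ on such an edge the half-arrow contributions then leave an unabsorbed half-integer. Concretely, take $n=3$, let $e_1$ be interior-mapped (so $H^{(\tau)}=Q_{\bP_3}$ there), ${\bf k}'={\bf k}_1$, and $w=(2,1,0)$: the neighbors of $w$ in $\Gamma_{\bP_3}$ are $(1,2,0)$ (half-arrow out), $(1,1,1)$ (arrow in), $(2,0,1)$ (arrow out), giving
\[
({\bf k}_1 H^{(\tau)})_{(2,1,0)}=1\cdot(-\tfrac12)+1\cdot 1+2\cdot(-1)=-\tfrac32\notin 3\mathbb Z.
\]
Your sentence ``the diagonal $-1$ is precisely what is needed to absorb the half-integer residues'' is correct only on boundary-mapped edges; on interior-mapped edges your $H^{(\tau)}$ has no diagonal $-1$ to do the absorbing.

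The repair is to apply the $H$-correction on \emph{all three} edges of each face, i.e.\ to use the intrinsic matrix $H_{\bP_3}$ (independent of which edges are boundary in $\fS$). Then the local claim $({\bf k}'H_{\bP_3})\in(n\mathbb Z)^{V_{\bP_3}}$ does hold --- it is exactly the lemma for $\fS=\bP_3$ --- and your triangle computation goes through (e.g.\ $({\bf k}_1 H_{\bP_3})_{(2,1,0)}=-3$). The cost is that $\sum_\tau H_{\bP_3}$ no longer equals $H_\lambda$: there is a discrepancy $D$ supported on pairs $v,v'$ lying on the same interior edge, with $D(v,v)=-2$ and $D(v',v)=1$ for adjacent $v',v$. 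So you must also check $({\bf k}D)_v\in n\mathbb Z$. This is easy: a balanced ${\bf k}$ restricts along any edge of $\lambda$ to a function of the form $i\mapsto c\,i\pmod n$ in the edge coordinate, and for such functions $-2{\bf k}(v)+{\bf k}(v_+)+{\bf k}(v_-)\equiv 0$ (and the one-neighbor endpoint case gives $-cn\equiv 0$). This two-step decomposition --- local $H_{\bP_3}$ plus an edge-correction term --- is essentially how \cite{LY23} organizes the proof; the paper here does not give its own argument but simply cites that result.
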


\begin{remark}\label{rem.k_balancedness_condition}
Proposition 11.10 in \cite{LY23}  
assumes the condition that $\fS$ contains no interior punctures because this proposition involves a matrix $\overline{\mathsf{K}}_\lambda$ (see \cite[\S11.6]{LY23}), which is defined only when $\fS$ contains no interior punctures.
But the arguments in \cite[Proposition 11.10]{LY23} yield the above
Lemma \ref{lem-balanced-H}  
for general $\fS$. 
In \cite[Proposition 11.10]{LY23}, they also show that $${\bf k} H_\lambda\in (n\mathbb Z)^{V_\lambda}\implies {\bf k}\text{ 
is balanced}$$ using the matrix $\overline{\mathsf{K}}_\lambda$ when 
$\fS$ contains no interior punctures.
\end{remark}

\begin{theorem}\cite{BW11,Kim20,LY22,LY23}\label{thm.quantum_trace}
    Suppose that $\fS$ is a triangulable pb surface with a triangulation $\lambda$. Then the following hold.

    \begin{enumerate}[label={\rm (\alph*)}]
    \item There is an algebra homomorphism
    $$\tr \colon \rdS\rightarrow
    \mathcal{Z}_\omega(\fS,\lambda),$$
    called the {\bf (reduced) quantum trace map},
    such that $\im\tr\subset \bX.$

    \item  
    $\tr$ is injective when $n=2$ or when $n=3$.

    \item 
    $\tr$ is injective when $\fS$ is a polygon.
    \end{enumerate}
\end{theorem}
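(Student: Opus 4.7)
The plan is to build $\overline{\rm tr}_\lambda$ in a triangle-first fashion and then extend to any triangulable surface via the splitting homomorphism of \S\ref{sub-splitting}. On a single ideal triangle $\mathbb P_3$, one defines $\overline{\rm tr}_{\mathbb P_3}$ on generating stated webs by an explicit formula; for a stated arc with end-states $(i,j)$ this is the transfer-matrix-type sum over paths in the Schrader--Shapiro network dual to the $n$-triangulation (the same formula that will reappear in Lemma~\ref{lem.quantum_trace_as_sum_over_paths}), and it extends to $n$-valent vertices by antisymmetrization over $S_n$. A direct (if lengthy) verification shows that every skein and boundary relation \eqref{w.cross}--\eqref{wzh.eight} is preserved and that each bad arc $C(p)_{ij}, \cev C(p)_{ij}$ is sent to zero, so the map descends from $\cS_\omega(\mathbb P_3)$ to $\overline{\cS}_\omega(\mathbb P_3)$; inspection of the monomials appearing in the formula also shows that the image already lies in the balanced subalgebra $\mathcal{Z}_\omega^{\rm bl}(\mathbb P_3, \lambda)$.

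For a general triangulable $(\fS, \lambda)$, cut $\fS$ along every interior edge of $\lambda$ to obtain the disjoint union $\bigsqcup_{\tau \in \mathbb F_\lambda} \tau$, together with an iterated splitting homomorphism
\[
\mathbb S_\lambda : \overline{\cS}_\omega(\fS) \longrightarrow \bigotimes_{\tau \in \mathbb F_\lambda} \overline{\cS}_\omega(\tau),
\]
which is an $R$-algebra homomorphism by \S\ref{sub-splitting}. Compose with $\bigotimes_\tau \overline{\rm tr}_\tau$ into $\bigotimes_\tau \mathcal{Z}_\omega^{\rm bl}(\tau)$, and then with the natural gluing map that identifies the $Z$-variables at matched boundary small vertices; this lands in $\mathcal{Z}_\omega(\fS,\lambda)$ and defines $\overline{\rm tr}_\lambda$. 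The main obstacle is the balancedness assertion $\text{image}(\overline{\rm tr}_\lambda) \subset \mathcal{Z}_\omega^{\rm bl}(\fS,\lambda)$: a single summand of $\mathbb S_\lambda(\alpha)$ produces, on each individual triangle, a monomial balanced only per that face, but the pointwise matching of states on the two sides of every cut edge is exactly what is needed to upgrade per-face balancedness to balancedness on $\fS$, controlled by Lemma~\ref{lem-balanced-H} applied to the combined row vector of face exponents.

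For part~(b), Bonahon--Wong's argument for $n=2$ fixes a basis of $\overline{\cS}_\omega(\fS)$ indexed by stated simple multicurves and shows that $\overline{\rm tr}_\lambda$ sends distinct basis elements to Laurent polynomials with distinct highest monomials in a suitable partial order, yielding upper-triangular injectivity; the same scheme is carried out for $n=3$ by Kim and by L\^e--Yu with a non-elliptic $3$-web basis. For part~(c), injectivity on $\mathbb P_3$ is verified directly from the network formula of Step~1, and $\mathbb P_k$ with $k\geq 4$ is handled by induction, splitting along an internal diagonal into two smaller polygons: the splitting homomorphism of reduced stated skein algebras on a polygon is injective \cite{LY23} and both factors in the target are injective by induction, so the naturality square forces $\overline{\rm tr}_{\mathbb P_k}$ to be injective as well.
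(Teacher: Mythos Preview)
The paper does not prove this theorem; it is quoted from the cited references \cite{BW11,Kim20,LY22,LY23} as background, so there is no ``paper's own proof'' to compare against. Your sketch broadly follows the architecture of \cite{LY23}, but two steps as written would not go through.

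First, the ``gluing map that identifies the $Z$-variables at matched boundary small vertices'' does not exist as an algebra homomorphism in the direction you need. The natural map is the splitting embedding $\mathcal S_e\colon \mathcal Z_\omega(\fS,\lambda)\hookrightarrow \mathcal Z_\omega(\mathsf{Cut}_e(\fS),\lambda_e)$ of \eqref{eq-splitting-torus}, going from the glued surface to the cut one. To define $\overline{\rm tr}_\lambda$ on $\fS$ one must show that the image of $(\bigotimes_\tau \overline{\rm tr}_\tau)\circ \mathbb S_\lambda$ lies inside the image of the iterated $\mathcal S_e$; this is a genuine compatibility check (essentially the content of the commuting square in Theorem~\ref{thm-trace-cut}), not a formal gluing. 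Second, your appeal to Lemma~\ref{lem-balanced-H} for balancedness is in the wrong direction: that lemma says balanced $\Rightarrow \mathbf k H_\lambda\in (n\mathbb Z)^{V_\lambda}$, while you need to \emph{conclude} balancedness. As Remark~\ref{rem.k_balancedness_condition} notes, the converse is only available when $\fS$ has no interior punctures; the actual proof in \cite{LY23} verifies balancedness directly from the triangle-level description, face by face. A smaller point: in \cite{LY23} the network path formula (their Theorem~10.5) is a theorem \emph{about} an already-constructed $\overline{\rm tr}_{\mathbb P_3}$, not its definition, so taking it as the starting formula would be circular unless you independently verify all relations from it.
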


\section{Quantum cluster algebras}

We recall basics of quantum cluster algebras, obtained by gluing quantum torus algebras associated to cluster seeds along quantum mutations. We refer the readers to \cite{BZ,FG09a,FG09b}. Then we extend the known quantum mutations to balanced subalgebras for $n$-th root variables. Such an extension was studied in \cite{Hiatt,BW11} for $n=2$, and in \cite{Kim21} for $n=3$.

\subsection{Classical cluster $\mathcal X$-mutations}

Fix a non-empty set $\mathcal{V}$, and a non-empty subset $\mathcal{V}_{\rm mut}$ of $\mathcal{V}$. The elements of $\mathcal{V}$, $\mathcal{V}_{\rm mut}$ and $\mathcal{V}\setminus \mathcal{V}_{\rm mut}$ are called {\bf vertices}, {\bf mutable vertices} and {\bf frozen vertices}, respectively. By a quiver $\Gamma$ we mean a directed graph whose set of vertices is $\mathcal{V}$ 
and equipped with weight on the edges so that the weight on each edge is $1$ unless the edge connects two frozen vertices, in which case the weight is $\frac{1}{2}$. An edge of weight $1$ will be called an {\bf arrow}, and an edge of weight $\frac{1}{2}$ a {\bf half-arrow}. As before, we denote by $Q = (Q(u,v))_{u,v\in \mathcal{V}}$ to denote the signed adjacency matrix of $\Gamma$. A quiver is considered up to equivalence, where two quivers are equivalent if they yield the same signed adjacency matrix. In particular, a quiver can be represented by a representative quiver without an oriented cycle of length 1 or 2. Let $\mathcal{F}$ be the field of rational functions over $\mathbb{Q}$ with the set of algebraically independent generators enumerated by $\mathcal{V}$, say $\mathcal{F} = \mathbb{Q}(\{y_v\}_{v\in \mathcal{V}})$. A {\bf cluster $\mathcal{X}$-seed} is a pair $\mathcal{D} = (\Gamma,(X_v)_{v\in \mathcal{V}})$, where $\Gamma$ is a quiver and $\{X_v\}_{v\in \mathcal{V}}$ forms an algebraically independent generating set of $\mathcal{F}$ over $\mathbb{Q}$. The signed adjacency matrix $Q$ of $\Gamma$ is called the {\bf exchange matrix} of the seed $\mathcal{D}$.

\def\sgn{\text{sgn}}

Suppose that $k\in\mathcal V_{
\rm mut}$. The {\bf mutation}
$\mu_k$ at the 
mutable vertex $k\in\mathcal{V}_{\rm mut}$ is defined to be the process that transforms a seed $\mathcal{D} = (\Gamma,(X_v)_{v\in \mathcal{V}})$ into a new seed
$\mu_k(\mathcal D) = \mathcal D' = 
(\Gamma', (
X_v')_{v\in 
\mathcal{V}})
$.
Here 
$$
X_v'= \begin{cases}
    X_v^{-1} & v=k,\\
    X_v (1+ 
    X_k^{-\text{sgn}(Q(v,k)) })^{-Q(v,k)} & v\neq k,
\end{cases}
$$
where 
$$
\sgn(a)=
\begin{cases}
    1 & a>0,\\
    0 & a=0,\\
    -1 & a<0,
\end{cases}
$$
for $a\in\mathbb R$, and $\Gamma'$ is obtained from 
$\Gamma$ by the following procedures: (1)
 reverse all the arrows incident to the vertex $k$,
(2) for each pair of arrows $k\rightarrow i$ and $j\rightarrow k$ draw an arrow $i\rightarrow j$,
(3) delete pairs of arrows $i\rightarrow j$ and $j\rightarrow i$ going in the opposite directions (more precisely, tidy up the quiver while maintaining the signed adjacency matrix).
We use $Q' = (Q'(u,v))_{u,v\in\mathcal V}$ to denote the signed adjacency matrix of $\Gamma'$. Then we have 
\begin{align}\label{eq-mutation-Q}
Q'(u,v) = \begin{cases}
    - Q(u,v) & k\in\{u,v\},\\
    Q(u,v) +\frac{1}{2}(Q(u,k)|Q(k,v)|+
    |Q(u,k)|Q(k,v)) & k\notin\{u,v\}.
\end{cases}
\end{align}

It is well-known that $\mu_k(\mu_k(\mathcal D)) = \mathcal D$.
\def\Fr{{\rm Frac}}

\subsection{Quantum mutations for Fock-Goncharov algebras}
Suppose that 
$\mathcal{D} = (\Gamma,(X_v)_{v\in \mathcal{V}})$ is an $\mathcal X$-seed 
whose 
exchange matrix 
is $Q$.
Define the {\bf Fock-Goncharov algebra} associated to $\mathcal D$ to be the quantum torus algebra
\begin{align*}
\mathcal{X}_q(\mathcal{D}) & = \bT_q(Q) \\
& = R\langle X_v^{\pm 1}, v\in \mathcal{V} \rangle / (X_v X_{v'} = q^{2Q(v,v')} X_{v'} X_v \mbox{ for } v,v' \in \mathcal{V}).
\end{align*}
It is equipped with the $*$-structure, whose $*$-map $\mathcal{X}_q(\mathcal{D}) \to \mathcal{X}_q(\mathcal{D})$ is a ring anti-homomorphism sending each generator $X_v^{\pm 1}$ to itself $X_v^{\pm 1}$, and $q^{\pm 1}$ to its inverse $q^{\mp 1}$.
We understand $\mathcal{X}_q(\mathcal{D})$ as the quantum algebra associated to the seed $\mathcal{D}$, which yields the classical algebra associated to $\mathcal{D}$ as $q\to 1$. In this subsection we recall the quantum version of the mutation formula.

The quantum mutation shall be a non-commutative rational map. So we need to use the skew-field of fractions of $\mathcal{X}_q(\mathcal{D})$, which we denote by ${\rm Frac}(\mathcal{X}_q(\mathcal{D}))$; for the existence of ${\rm Frac}(\mathcal{X}_q(\mathcal{D}))$, see \cite{Cohn}. 

A useful notion is the Weyl-ordered product, which is a certain normalization of a Laurent monomial defined as follows: for any $v_1,\ldots,v_r \in \mathcal{V}$ and $a_1,\ldots,a_r \in \mathbb{Z}$,
\begin{align}
\label{Weyl_ordering}
\left[ X_{v_1}^{a_1} X_{v_2}^{a_2} \cdots X_{v_r}^{a_r} \right] := q^{-\sum_{i<j} Q(v_i,v_j)} X_{v_1}^{a_1} X_{v_2}^{a_2} \cdots X_{v_r}^{a_r}
\end{align}
In particular, for ${\bf t} = (t_v)_{v\in \mathcal{V}} \in \mathbb{Z}^\mathcal{V}$, the following notation for the corresponding Weyl-ordered Laurent monomial will become handy:
$$
X^{\bf t} := \left[ \prod_{v\in \mathcal{V}} X_v^{t_v}\right].
$$
Due to the property of Weyl-ordered products, note that this element is independent of the choice of the order of the product inside the bracket, and that it is invariant under the $*$-map.

The following special function is a crucial ingredient.
\begin{definition}[compact quantum dilogarithm \cite{F95,FK94}]
    The quantum dilogarithm for a quantum parameter
$q$ is the function
$$\Psi^q(x) = \prod_{r=1}^{+\infty} (1+ q^{2r
-1} x)^{-1}.$$
\end{definition}
For our purposes, the infinite product can be understood formally, as what matters for us is only the following:

\def\sgn{{\rm sgn}}

\begin{lemma}\label{lem-F-P}
    Suppose that $xy = q^{2m} yx$ for some $m\in \mathbb{Z}$. Then we have 
    $${\rm Ad}_{\Psi^q(x)}(y) = y F^q(x,m),$$
    where
$$F^q(x,m)=\prod_{r=1}^{|m|} (1+ q^{(2r-1){\rm sgn}(m)}x)^{\sgn(m)}.$$
\end{lemma}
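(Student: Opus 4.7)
The plan is to move $y$ across $\Psi^q(x)$ using the given $q$-commutation and then to identify the residual $\Psi^q(q^{2m}x)\Psi^q(x)^{-1}$ with the finite product $F^q(x,m)$ by a telescoping cancellation. First I would observe that $xy = q^{2m}yx$ forces $x^{k} y = q^{2mk} y x^{k}$ for every $k \ge 0$, and hence that any polynomial $f$ satisfies $f(x)\, y = y\, f(q^{2m} x)$. Treating $\Psi^q(x) = \prod_{r\ge 1}(1+q^{2r-1}x)^{-1}$ as a formal infinite product in $x$, the same identity extends to $\Psi^q(x)\, y = y\, \Psi^q(q^{2m} x)$, whence ${\rm Ad}_{\Psi^q(x)}(y) = y\cdot \Psi^q(q^{2m}x)\,\Psi^q(x)^{-1}$, and the problem reduces to evaluating this ratio as an element of the quantum torus algebra generated by $x$.

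Next I would compute $\Psi^q(q^{2m}x)\Psi^q(x)^{-1}$ by reindexing and cancellation, splitting on the sign of $m$. When $m > 0$, the substitution $s = r + m$ in $\Psi^q(q^{2m}x) = \prod_{r \ge 1}(1+q^{2r-1+2m}x)^{-1}$ shifts this to $\prod_{s \ge m+1}(1+q^{2s-1}x)^{-1}$, which cancels the tail $r \ge m+1$ of $\Psi^q(x)^{-1} = \prod_{r \ge 1}(1+q^{2r-1}x)$ and leaves the finite product $\prod_{r=1}^{m}(1+q^{2r-1}x)$. When $m < 0$, the analogous substitution yields $\prod_{s \ge 1-|m|}(1+q^{2s-1}x)^{-1}$, whose part with $s \ge 1$ cancels against $\Psi^q(x)^{-1}$; flipping the index via $r = 1-s$ on the residual piece produces $\prod_{r=1}^{|m|}(1+q^{-(2r-1)}x)^{-1}$. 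Both outcomes equal $F^q(x,m)$ as defined, and the case $m = 0$ is trivial since then $x$ and $y$ commute and $F^q(x,0)$ is an empty product.

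The main subtlety that I see is the passage from polynomials in $x$ to the infinite product $\Psi^q(x)$, which does not literally lie in the quantum torus algebra at hand. I would treat this formally: either work in a completion of $R[x^{\pm 1}]$ in which the rearrangement above is legitimate, or take the functional equation $\Psi^q(q^2 x) = (1+qx)\Psi^q(x)$ as the defining property of conjugation by $\Psi^q(x)$ and iterate $|m|$ times. Under either interpretation the conclusion becomes a finite Laurent-polynomial identity, so there is no genuine convergence issue, and I do not anticipate any further obstacle beyond this interpretive remark.
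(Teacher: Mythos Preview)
Your argument is correct and is the standard derivation of this identity. The paper itself does not supply a proof of this lemma: it states the formula, lists a few examples, and then remarks that one may simply \emph{define} the symbol ${\rm Ad}_{\Psi^q(x)}$ to act by $y \mapsto y\,F^q(x,m)$, so that the infinite product never needs to be manipulated. Your proof thus goes beyond what the paper provides, giving the honest computation that justifies the notation; your closing paragraph anticipates exactly the paper's viewpoint (take the functional equation or the formula itself as the definition), so there is nothing to add.
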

For example, $F^q(x,0) = 1$, $F^q(x,1) = 1+qx$, $F^q(x,2) = (1+qx)(1+q^3x)$, $F^q(x,-1) = (1+q^{-1} x)^{-1}$, $F^q(x,-2) = (1+q^{-1}x)^{-1}(1+q^{-3}x)^{-1}$, etc.

Instead of understanding the above as a lemma, one can just understand ${\rm Ad}_{\Psi^q(x)}$ as a symbol that sends $y$ to $y \cdot F^q(x,m)$. In particular, if $x = X_k$ and $y = X_v$ are generators of the skew-field ${\rm Frac}(\mathcal{X}_q(\mathcal{D}))$, consider the expression
$$
{\rm Ad}_{\Psi^q(X_k)}(X_v) = X_v \cdot F^q(X_k, Q(k,v)).
$$
It is easy to check that ${\rm Ad}_{\Psi^q(X_k)}$ extends to a unique skew-field isomorphism from ${\rm Frac}(\mathcal{X}_q(\mathcal{D}))$ to itself, which we denote by ${\rm Ad}_{\Psi^q(X_k)}$.

\begin{definition}[quantum $\mathcal X$-mutation for Fock-Goncharov algebras \cite{BZ,FG09a,FG09b}]\label{def.quantum_X-mutation}
     Suppose 
     that $\mathcal{D} = (\Gamma,(X_v)_{v\in \mathcal{V}})$ is an $\mathcal X$-seed, 
     $k$$\in \mathcal{V}_{\rm mut}$ is a 
     mutable vertex of $\Gamma$, and $\mathcal D'=\mu_k(\mathcal D)$.
      The quantum mutation map 
      is the 
      isomorphism between the 
      skew-fields
      $$\mu_{
      \mathcal{D},\mathcal{D}'}^q=\mu_k^q \, \colon \, \Fr(
      \mathcal{X}_q(\mathcal D'))\rightarrow
      \Fr(
      \mathcal{X}_q(\mathcal D))$$
      given by the composition
      $$\mu_k^q= \mu_k^{\sharp q}\circ \mu_k',$$
      where $\mu'_k$ is the isomorphism of skew-fields
      $$\mu_k'\colon \Fr(
      \mathcal{X}_q(\mathcal D'))\rightarrow
      \Fr(
      \mathcal{X}_q(\mathcal D))$$ 
      given by 
      \begin{align}\label{eq-quantum-mutation}
      \mu_k'(
      X_v') = 
      \begin{cases}
          X_k^{-1}& \mbox{if } v=k,\\
          \left[ X_v X_k^{[Q(v,k)]_+}\right]
          & \mbox{if } v\neq k.
      \end{cases}
      \end{align}
      where $[a]_+$ stands for the positive part of a real number $a$
      $$
      [a]_+ := \max\{a,0\} = {\textstyle \frac{1}{2}(a+|a|)},
$$
and $[\sim]$ is the Weyl-ordered product as in \eqref{Weyl_ordering},
      while
       $\mu^{\sharp q}_k$ is the following skew-field automorphism
       $$\mu_k^{\sharp q} = {\rm Ad}_{\Psi^q(
       X_k)}\colon \Fr(
       \mathcal{X}_q(\mathcal D))\rightarrow
      \Fr(
      \mathcal{X}_q(\mathcal D)).$$
\end{definition}

Suppose that $\mathcal{D} = (\Gamma,(X_v)_{v\in \mathcal{V}})$ 
is an $\mathcal X$-seed, 
     $k\in \mathcal{V}_{\rm mut}$ is a mutable vertex, and $\mathcal D'=\mu_k(\mathcal D)$. We have the following.

\begin{lemma}\label{lem-commute}
Let $\mathbb{F} = \mathbb{Z}$ or $\mathbb{Z}/m\mathbb{Z}$ for some positive integer $m\ge 2$.
    Let $\mathcal V_0\subset\mathcal V$ contain $k$, and let ${\bf t}'=(t_v')_{v\in\mathcal V}\in\bZ ^{\mathcal V}$ 
    satisfy $\sum_{v\in\mathcal V} Q'(u,v) t_v' = 0\in 
    \mathbb{F}$ for all $u\in\mathcal V_0$.
    Suppose that  $\mu_k'((
    X')^{{\bf t}'}) = 
    X^{\bf t}$, where
    ${\bf t}=(
    t_v)_{v\in\mathcal V}\in\bZ ^{\mathcal V}$.
    Then we have 
    $\sum_{v\in\mathcal V} Q(u,v) t_v = 0\in 
    \mathbb{F}$ for all $u\in\mathcal V_0$.
\end{lemma}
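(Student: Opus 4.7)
The plan is to make the relationship between $\mathbf t$ and $\mathbf t'$ explicit via the quantum mutation formula \eqref{eq-quantum-mutation}, and then expand $\sum_v Q'(u,v) t_v'$ using the quiver mutation formula \eqref{eq-mutation-Q}, showing that it differs from $\sum_v Q(u,v) t_v$ by a multiple of $\sum_v Q(k,v) t_v$; the latter vanishes modulo $\mathbb F$ thanks to the hypothesis $k \in \mathcal V_0$.

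First I would extract $\mathbf t$ explicitly. Applying $\mu'_k$ term by term to the Weyl-ordered product $(X')^{\mathbf t'}$ using \eqref{eq-quantum-mutation} and regrouping the result into the Weyl-ordered form $X^{\mathbf t}$ yields
\begin{equation*}
t_v = t_v' \quad (v \neq k), \qquad t_k = -t_k' + \sum_{v \neq k} t_v' \, [Q(v,k)]_+.
\end{equation*}
Next I would compute $\sum_v Q'(u, v) t_v'$ for each $u \in \mathcal V_0$. For $u = k$, using $Q'(k, v) = -Q(k, v)$ together with $Q(k, k) = 0$ and $t_v = t_v'$ for $v \neq k$, one immediately obtains $\sum_v Q'(k, v) t_v' = -\sum_v Q(k, v) t_v$. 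For $u \neq k$, the quiver mutation formula \eqref{eq-mutation-Q} can be rewritten as
\begin{equation*}
Q'(u, v) = Q(u, v) + [Q(u, k)]_+ \, Q(k, v) + Q(u, k) \, [-Q(k, v)]_+ \qquad (v \neq k).
\end{equation*}
Substituting this together with $Q'(u, k) = -Q(u, k)$, using antisymmetry $[-Q(k, v)]_+ = [Q(v, k)]_+$, and invoking the formula for $t_k$ above to rewrite $\sum_{v \neq k} [Q(v, k)]_+ t_v'$ as $t_k + t_k'$, a short calculation yields the key identity
\begin{equation*}
\sum_v Q'(u, v) t_v' = \sum_v Q(u, v) t_v + [Q(u, k)]_+ \sum_v Q(k, v) t_v.
\end{equation*}

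With these two identities in hand, the conclusion follows at once. Since $k \in \mathcal V_0$, the hypothesis gives $\sum_v Q'(k, v) t_v' \equiv 0 \pmod{\mathbb F}$, hence $\sum_v Q(k, v) t_v \equiv 0 \pmod{\mathbb F}$ by the $u = k$ identity. Because $k$ is mutable, every edge incident to $k$ has weight $1$, so $Q(u, k) \in \mathbb Z$ and hence $[Q(u, k)]_+ \in \mathbb Z$; therefore $[Q(u, k)]_+ \sum_v Q(k, v) t_v \equiv 0 \pmod{\mathbb F}$. For any $u \in \mathcal V_0$ with $u \neq k$, combining the hypothesis with the $u \neq k$ identity then gives $\sum_v Q(u, v) t_v \equiv 0 \pmod{\mathbb F}$, as required. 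The main difficulty is the bookkeeping in the middle step; the only conceptual subtlety is the potential appearance of half-integer entries in $Q$ between frozen vertices, but this does not affect the argument, since $k$ itself is mutable and hence $[Q(u, k)]_+$ is always an integer.
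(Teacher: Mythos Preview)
Your proof is correct and is precisely the ``straightforward computation using definitions'' that the paper alludes to but omits; the paper provides no argument of its own, instead citing \cite[Lem.~2.7]{FG09b} and \cite[Lem.~3.7(1)]{Kim_irreducible} for the core computation, which is exactly the identity you derive.
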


This lemma, which will be used later in our proofs, can be verified by straightforward computations using definitions. It can also be viewed as a corollary of \cite[Lem.2.7]{FG09b} or \cite[Lem.3.7(1)]{Kim_irreducible}, where the core of the necessary computation can be found. Hence we omit a proof of the above lemma.

\subsection{Quantum mutations for mutable-balanced $n$-th root Fock-Goncharov algebras}
Suppose 
that $\mathcal{D} = (\Gamma,(X_v)_{v\in \mathcal{V}})$ is an $\mathcal X$-seed 
whose exchange matrix is $Q$.
Define the {\bf $n$-th root Fock-Goncharov algebra} associated to $\mathcal D$ to be
\begin{align*}
\mathcal{Z}_\omega(\mathcal{D}) & = \mathbb{T}_\omega(Q) \\
& = R\langle Z_v^{\pm 1}, v\in \mathcal{V} \rangle / (Z_v Z_{v'} = \omega^{2Q(v,v')} Z_{v'} Z_v \mbox{ for } v,v' \in \mathcal{V}),
\end{align*}
equipped with the $*$-map $\mathcal{Z}_\omega(\mathcal{D}) \to \mathcal{Z}_\omega(\mathcal{D})$ sending each $Z_v^{\pm 1}$ to $Z_v^{\pm 1}$ and $\omega^{\pm 1}$ to $\omega^{\mp 1}$, 
into which the Fock-Goncharov algebra $\mathcal{X}_q(\mathcal{D})$ embeds as
$$
X_v \mapsto Z_v^n, \quad \forall v\in \mathcal{V}.
$$

We use $\Fr(
\mathcal{Z}_\omega(\mathcal{D}))$ to denote the 
skew-field of fractions of $
\mathcal{Z}_\omega(\mathcal D)$. 
Naturally, ${\rm Frac}(\mathcal{X}_q(\mathcal{D}))$ embeds into ${\rm Frac}(\mathcal{Z}_\omega(\mathcal{D}))$. The Weyl-ordered product can be defined similarly as before: for any $v_1,\ldots,v_r \in \mathcal{V}$ and $a_1,\ldots,a_r \in \mathbb{Z}$,
\begin{align}
\label{Weyl_ordering_Z}
\left[ Z_{v_1}^{a_1} \cdots Z_{v_r}^{a_r} \right] := \omega^{-\sum_{i<j} Q(v_i,v_j)} Z_{v_1}^{a_1} \cdots Z_{v_r}^{a_r}.
\end{align}
For ${\bf t} =(t_v)_{v\in \mathcal{V}} \in \mathbb{Z}^\mathcal{V}$ one defines the Weyl-ordered Laurent monomial:
\begin{align}\label{eq-weyl-order-Z}
Z^{\bf t} := \left[ \prod_{v\in \mathcal{V}}Z_v^{t_v} \right],
\end{align}
which does not depend on the order of the product taken, and which is invariant under the $*$-map.

The goal of the present subsection is to extend the quantum mutation maps $\mu^q_k$ defined on ${\rm Frac}(\mathcal{X}_q(\mathcal{D}))$ to the $n$-root versions on ${\rm Frac}(\mathcal{Z}_\omega(\mathcal{D}))$. We will soon see that the extension should be defined on a certain sub-skew-field of ${\rm Frac}(\mathcal{Z}_\omega(\mathcal{D}))$, rather than the entire ${\rm Frac}(\mathcal{Z}_\omega(\mathcal{D}))$.

Let $k$ be a 
mutable vertex of $\Gamma$ and let 
$$
\mathcal D' =
\mu_k(\mathcal D).
$$
Recall that the quantum mutation map $\mu^q_k : {\rm Frac}(\mathcal{X}_q(\mathcal{D}')) \to {\rm Frac}(\mathcal{X}_q(\mathcal{D}))$ is presented as the composition $\mu^q_k = \mu^{\sharp q}_k \circ \mu'_k$ of the monomial transformation part $\mu'_k : {\rm Frac}(\mathcal{X}_q(\mathcal{D}')) \to {\rm Frac}(\mathcal{X}_q(\mathcal{D}))$ and the automorphism part $\mu^{\sharp q}_k : {\rm Frac}(\mathcal{X}_q(\mathcal{D})) \to {\rm Frac}(\mathcal{X}_q(\mathcal{D}))$. We shall extend each of $\mu'_k$ and $\mu^{\sharp q}_k$ to its $n$-root versions.

     First, define the 
     skew-field isomorphism $$\nu_k'\colon \Fr(
     \mathcal{Z}_\omega(\mathcal D'))\rightarrow
      \Fr(
      \mathcal{Z}_\omega(\mathcal D))$$
      such that the action of $\nu_k'$ on the generators is given by 
      the formula \eqref{eq-quantum-mutation} for $\mu'_k$ with $
      X_{v}$ replaced by $
      Z_{v}$:
     \begin{align}\label{eq-quantum-mutation_Z}
      \nu_k'(Z_v') = 
      \begin{cases}
          Z_k^{-1}& \mbox{if } v=k,\\
          \left[ Z_v Z_k^{[Q(v,k)]_+}\right]
          & \mbox{if } v\neq k.
      \end{cases}
      \end{align}
      It is easy to check that $\nu'_k$ extends $\mu'_k$. We recall a useful lemma, which is straightforward to verify:
      \begin{lemma}[{\cite[Lem.3.19]{Kim21}}]
      \label{lem.nu_k_prime_preserves_Weyl-ordering}
          The map $\nu'_k$ sends a Weyl-ordered Laurent monomial to a Weyl-ordered Laurent monomial.
      \end{lemma}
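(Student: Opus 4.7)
The plan is to verify directly, by tracking $\omega$-prefactors, that $\nu'_k$ sends Weyl-ordered monomials to Weyl-ordered monomials. The starting observation is that the formulas in \eqref{eq-quantum-mutation_Z} already exhibit each $\nu'_k(Z'_v)$ as a Weyl-ordered monomial in the generators $Z_w$ of $\mathcal{Z}_\omega(\mathcal{D})$. Explicitly, letting ${\bf m}_k \in \mathbb{Z}^{\mathcal V}$ be the vector equal to $-1$ at $k$ and $0$ elsewhere, and for $v \neq k$ letting ${\bf m}_v \in \mathbb{Z}^{\mathcal V}$ be the vector with entries $1$ at $v$, $[Q(v,k)]_+$ at $k$, and $0$ elsewhere, one has $\nu'_k(Z'_v) = Z^{{\bf m}_v}$ for every $v \in \mathcal V$.

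I would then establish, for every ${\bf t} = (t_v)_{v} \in \mathbb{Z}^{\mathcal V}$, the identity
\[
\nu'_k\bigl( Z'^{\bf t} \bigr) \;=\; Z^{\sum_v t_v {\bf m}_v}.
\]
To do so, fix a linear order $v_1 < v_2 < \cdots$ on $\mathcal V$. By \eqref{eq-weyl-order-Z} applied inside $\mathcal{Z}_\omega(\mathcal D')$, the left-hand side equals an explicit power of $\omega$, determined by $Q'$ and $\bf t$, times the ordered product $\prod_i \nu'_k(Z'_{v_i})^{t_{v_i}} = \prod_i Z^{t_{v_i} {\bf m}_{v_i}}$. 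Rewriting each factor $Z^{t_{v_i}{\bf m}_{v_i}}$ in terms of the generators $Z_w$ and then re-merging into a single ordered product in the fixed order introduces further explicit powers of $\omega$ governed by $Q$, after which we obtain an ordered product whose total exponent vector in the $Z_w$'s is exactly $\sum_v t_v {\bf m}_v$. Comparing with the definition of $Z^{\sum_v t_v {\bf m}_v}$ on the right-hand side reduces the lemma to a single identity among $\omega$-exponents.

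The main obstacle, and really the only substantive thing to check, is that this $\omega$-exponent identity does hold. It follows from the fact that $\nu'_k$ is an algebra homomorphism: the images $T_v := \nu'_k(Z'_v)$ satisfy
\[
T_v T_w \;=\; \omega^{2\, Q'(v,w)}\, T_w T_v
\]
because of the defining relations of $\mathcal{Z}_\omega(\mathcal D')$, while a direct calculation using the Weyl-ordered form $T_v = Z^{{\bf m}_v}$ inside $\mathcal{Z}_\omega(\mathcal D)$ yields the same commutation with $Q'(v,w)$ replaced by ${\bf m}_v^{\top} Q\, {\bf m}_w$. Hence $Q'(v,w) = {\bf m}_v^{\top} Q\, {\bf m}_w$ for all $v,w \in \mathcal V$, which is precisely the mutation formula \eqref{eq-mutation-Q} in our normalization. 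Summing this bilinear identity weighted by $t_{v_i} t_{v_j}$ over the pairs $i<j$ provides exactly the match of $\omega$-prefactors required in the previous paragraph, completing the proof.
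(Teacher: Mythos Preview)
Your proof is correct and is exactly the kind of direct verification the paper has in mind: it states the lemma without proof, citing \cite[Lem.3.19]{Kim21} and calling it ``straightforward to verify.'' Your argument via the bilinear identity $Q'(v,w) = {\bf m}_v^{\top} Q\, {\bf m}_w$ (which, as you observe, is equivalent to the mutation formula \eqref{eq-mutation-Q} and is forced by the well-definedness of $\nu'_k$ as an algebra map) is precisely such a check.
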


      For an $n$-root extension of the automorphism part $\mu^{\sharp q}_k$, we consider the following subalgebra of $\mathcal{Z}_\omega(\mathcal{D})$.
   \begin{definition}\label{def.mbl}
   Define  
       $$\mathcal B_{\mathcal D}=\{{\bf t}=(
       t_v)_{v\in\mathcal V}\in\mathbb Z^{
       \mathcal{V}}\mid \sum_{v\in 
       \mathcal{V}} Q(u,v)
       t_v=0\in
       \mathbb{Z}/n \mathbb{Z}\text{ for all }u\in\mathcal V_{
       {\rm mut}}\},$$
    and define 
    $\mathcal{Z}^{\rm mbl}_\omega(\mathcal{D})$
    to be the $R$-submodule of $
    \mathcal{Z}_\omega(\mathcal D)$ spanned by $Z^{{\bf t}}$ for ${\bf t}\in \mathcal B_{\mathcal D}$. We call $\mathcal{Z}^{\rm mbl}_\omega(\mathcal{D})$ the {\bf mutable-balanced subalgebra} of $\mathcal{Z}_\omega(\mathcal{D})$.
   \end{definition}

Note that $\mathcal B_{\mathcal D}$ is a subgroup of $\mathbb Z^{
\mathcal{V}}$. So 
$\mathcal{Z}^{\rm mbl}_\omega(\mathcal{D})$
    is 
    indeed an $R$-subalgebra of $
    \mathcal{Z}_\omega(\mathcal D)$.
    Since $n \mathbb Z^{
    \mathcal{V}}\subset\mathcal B_{\mathcal D}\subset \mathbb Z^{
    \mathcal{V}}$, we have 
    $$\mathcal{X}_q(\mathcal{D}) \subset \mathcal{Z}^{\rm mbl}_\omega(\mathcal{D}) \subset \mathcal{Z}_\omega(\mathcal{D}).$$
    The mutable-balanced subalgebra $\mathcal{Z}^{\rm mbl}_\omega(\mathcal{D})$ is precisely the subalgebra of the $n$-th root Fock-Goncharov algebra $\mathcal{Z}_\omega(\mathcal{D})$ to which we can extend the automorphism-part map $\mu^{\sharp q}_k$ for $\mathcal{X}_q(\mathcal{D})$. More precisely, just as for $\mathcal{X}_q(\mathcal{D})$, one needs to consider the skew-field of fractions ${\rm Frac}(\mathcal{Z}^{\rm mbl}_\omega(\mathcal{D}))$, which does make sense because $\mathcal{Z}^{\rm mbl}_\omega(\mathcal{D})$ is a quantum torus algebra  \cite{Cohn}, which in turn follows from the fact that $\mathcal{B}_\mathcal{D} \subset \mathbb{Z}^\mathcal{V}$ is a finitely generated free $\mathbb{Z}$-module.

\begin{lemma}\label{lem.nu_sharp_well-defined}
There exists a unique skew-field isomorphism
    $$ \nu_k^{\sharp \omega}:={\rm Ad}_{\Psi^q(
    X_k)}\colon \Fr(
    \mathcal{Z}^{\rm mbl}_\omega(\mathcal D))\rightarrow
      \Fr(
      \mathcal{Z}^{\rm mbl}_\omega(\mathcal D))$$
      such that for each ${\bf t} = (t_v)_{v\in \mathcal{V}} \in \mathcal{B}_\mathcal{D}$, we have
      $$
      \nu^{\sharp \omega}_k(Z^{\bf t}) = Z^{\bf t} \, F^q(X_k, m),
$$
where $m = \frac{1}{n} \sum_{v\in \mathcal{V}} Q(k,v) t_v$.
\end{lemma}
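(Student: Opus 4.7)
The plan is to define $\nu^{\sharp\omega}_k$ on the quantum-torus generators of $\mathcal{Z}^{\rm mbl}_\omega(\mathcal D)$ by the prescribed formula $Z^{\bf t}\mapsto Z^{\bf t}\,F^q(X_k,m_{\bf t})$ with $m_{\bf t}:=\tfrac{1}{n}\sum_v Q(k,v)t_v$, verify that this respects all the defining relations of the quantum torus, and then extend to the skew-field by the universal property of the Ore localization. Two preliminary points need to be noted. First, $m_{\bf t}\in\mathbb Z$ because $k\in\mathcal V_{\rm mut}$ and ${\bf t}\in\mathcal B_{\mathcal D}$, by Definition \ref{def.mbl}. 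Second, since $n\mathbb Z^{\mathcal V}\subset\mathcal B_{\mathcal D}$ (as noted directly after Definition \ref{def.mbl}), $X_k=Z_k^n$ lies in $\mathcal{Z}^{\rm mbl}_\omega(\mathcal D)$, and hence every factor $(1+q^{2r-1}X_k)^{\pm1}$ appearing in $F^q(X_k,m_{\bf t})$ lies in $\Fr(\mathcal{Z}^{\rm mbl}_\omega(\mathcal D))$, so the putative image $Z^{\bf t}F^q(X_k,m_{\bf t})$ is a bona fide element of the target skew-field.

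Since $\mathcal B_{\mathcal D}$ is a free $\mathbb Z$-module of rank $|\mathcal V|$, I pick a $\mathbb Z$-basis ${\bf t}_1,\ldots,{\bf t}_N$; then $\mathcal{Z}^{\rm mbl}_\omega(\mathcal D)$ is itself a quantum torus generated by the $Z^{{\bf t}_i}$ with relations $Z^{{\bf t}_i}Z^{{\bf t}_j}=\omega^{2\tilde Q_{ij}}Z^{{\bf t}_j}Z^{{\bf t}_i}$ for some antisymmetric matrix $\tilde Q$ derived from $Q$. I declare $\nu^{\sharp\omega}_k(Z^{{\bf t}_i}):=Z^{{\bf t}_i}F^q(X_k,m_{{\bf t}_i})$ and extend as an algebra homomorphism. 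The central identity to verify is the telescoping commutation
\bes
F^q(X_k,m_{\bf t})\cdot Z^{\bf s}=Z^{\bf s}\cdot F^q(X_k,m_{\bf t}+m_{\bf s})\cdot F^q(X_k,m_{\bf s})^{-1},
\ees
which follows by commuting $Z^{\bf s}$ through each factor $(1+q^{2r-1}X_k)^{\pm1}$ using $X_kZ^{\bf s}=q^{2m_{\bf s}}Z^{\bf s}X_k$. From this one obtains
\bes
\bigl(Z^{\bf t}F^q(X_k,m_{\bf t})\bigr)\bigl(Z^{\bf s}F^q(X_k,m_{\bf s})\bigr)=Z^{\bf t}Z^{\bf s}\cdot F^q(X_k,m_{{\bf t}+{\bf s}}),
\ees
and the right-hand side is symmetric in $({\bf t},{\bf s})$ up to the same $\omega$-scalar that relates $Z^{\bf t}Z^{\bf s}$ to $Z^{\bf s}Z^{\bf t}$. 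Hence every defining relation of $\mathcal{Z}^{\rm mbl}_\omega(\mathcal D)$ is preserved, so the prescription gives a well-defined algebra homomorphism $\mathcal{Z}^{\rm mbl}_\omega(\mathcal D)\to\Fr(\mathcal{Z}^{\rm mbl}_\omega(\mathcal D))$, which extends uniquely to a skew-field endomorphism of $\Fr(\mathcal{Z}^{\rm mbl}_\omega(\mathcal D))$.

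To confirm that $\nu^{\sharp\omega}_k$ is a skew-field isomorphism, I exhibit the inverse. Setting $\tau(Z^{\bf t}):=Z^{\bf t}\cdot F^q(X_k,m_{\bf t})^{-1}$ on generators and extending multiplicatively, the same telescoping argument shows $\tau$ is a well-defined skew-field endomorphism. Because $\Psi^q(X_k)$ commutes formally with $X_k$, both $\nu^{\sharp\omega}_k$ and $\tau$ fix $X_k$ pointwise, so direct substitution on the generators $Z^{{\bf t}_i}$ yields $\nu^{\sharp\omega}_k\circ\tau=\tau\circ\nu^{\sharp\omega}_k=\mathrm{id}$. Uniqueness of $\nu^{\sharp\omega}_k$ is automatic, since its values on the generators of the quantum torus are prescribed by the formula and any skew-field map is determined by its values on a generating set.

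I expect the main obstacle to be the telescoping identity in the multiplicativity check. In the case $m_{\bf t},m_{\bf s}\geq 0$ it reduces to the clean combinatorial identity
\bes
\prod_{r=1}^{m_{\bf t}}(1+q^{2r-1+2m_{\bf s}}X_k)\cdot\prod_{r=1}^{m_{\bf s}}(1+q^{2r-1}X_k)=\prod_{r=1}^{m_{\bf t}+m_{\bf s}}(1+q^{2r-1}X_k),
\ees
but the cases of mixed signs of $m_{\bf t}$ and $m_{\bf s}$ require separate bookkeeping, since then $F^q(X_k,m_{{\bf t}+{\bf s}})$ involves partial cancellation between the numerator factors inherited from $F^q(X_k,m_{\bf t})$ and the denominator factors inherited from $F^q(X_k,m_{\bf s})$. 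Once this identity is verified in all four sign cases, every remaining step of the argument is formal.
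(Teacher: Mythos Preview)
Your proposal is correct and follows essentially the same approach as the paper's proof: verify $m_{\bf t}\in\mathbb Z$, check that the assignment $Z^{\bf t}\mapsto Z^{\bf t}F^q(X_k,m_{\bf t})$ respects the quantum-torus relations (the paper simply says this is ``straightforward to check''), and produce an explicit inverse. Your write-up is in fact more careful than the paper's on two points: you spell out the telescoping identity underlying multiplicativity, and your inverse $\tau(Z^{\bf t})=Z^{\bf t}F^q(X_k,m_{\bf t})^{-1}$ is the correct one, whereas the paper's stated formula $Z^{\bf t}\mapsto Z^{\bf t}F^q(X_k,-m)$ for the inverse appears to be a slip (note $F^q(x,m)^{-1}\neq F^q(x,-m)$ in general).
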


\begin{proof}
First, for ${\bf t} \in \mathcal{B}_\mathcal{D}$, we have $m\in \mathbb{Z}$, so that the expression $F^q(X_k,m)$ is well-defined. It is straightforward to check that $\nu^{\sharp \omega}_k$ extends to a unique algebra homomorphism. Similar arguments show that there is a unique skew-field homomorphism
$$
{\rm Ad}_{\Psi^q(X_k)^{-1}} : {\rm Frac}(\mathcal{Z}^{\rm mbl}_\omega(\mathcal{D})) \to {\rm Frac}(\mathcal{Z}^{\rm mbl}_\omega(\mathcal{D}))
$$
such that ${\rm Ad}_{\Psi^q(X_k)^{-1}}(Z^{\bf t}) = Z^{\bf t} F^q(X_k,-m)$ for each ${\bf t} = (t_v)_{v\in \mathcal{V}} \in \mathcal{B}_\mathcal{D}$, with $m = \frac{1}{n}\sum_{v\in \mathcal{V}} Q(k,v) t_v$. It is easy to see that $\nu^{\sharp \omega}_k = {\rm Ad}_{\Psi^q(X_k)}$ and ${\rm Ad}_{\Psi^q(X_k)^{-1}}$ are inverses to each other.
\end{proof}

One can observe that ${\rm Frac}(\mathcal{X}_q(\mathcal{D})) \subset {\rm Frac}(\mathcal{Z}^{\rm mbl}_\omega(\mathcal{D}))$, and that $\nu^{\sharp \omega}_k$ indeed extends $\mu^{\sharp q}_k$ defined in Def.\ref{def.quantum_X-mutation}, 
as already hinted by the fact that we denoted both by the symbol ${\rm Ad}_{\Psi^q(X_k)}$ in eq.\eqref{lem-F-P}.

The following lemma, which follows from Lemma \ref{lem-commute}, says that the skew-fields of fractions of the mutable-balanced subalgebras are compatible with the monomial-transformation isomorphism $\nu'_k$.

\begin{lemma}\label{lem.nu_prime_restricts}
    The map $\Fr(
    \mathcal{Z}_\omega(\mathcal D'))\xrightarrow{\nu_k'}
      \Fr(
      \mathcal{Z}_\omega(\mathcal D))$ defined in \eqref{eq-quantum-mutation_Z} restricts to a  
      skew-field isomorphism
      $\Fr(
      \mathcal{Z}^{\rm mbl}_\omega(\mathcal D'))\xrightarrow{\nu_k'}
      \Fr(
      \mathcal{Z}^{\rm mbl}_\omega(\mathcal D)).$ \qed
\end{lemma}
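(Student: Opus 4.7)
The plan is to reduce the claim to Lemma~\ref{lem-commute} applied in both directions via the involutivity of $\mu_k$, combined with an elementary lattice-index comparison.

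First, I would take any ${\bf t}' \in \mathcal{B}_{\mathcal{D}'}$ and, using Lemma~\ref{lem.nu_k_prime_preserves_Weyl-ordering}, write $\nu'_k((Z')^{{\bf t}'}) = Z^{\bf t}$ for a uniquely determined ${\bf t} \in \mathbb{Z}^\mathcal{V}$. The defining formula~\eqref{eq-quantum-mutation_Z} for $\nu'_k$ is obtained from the formula~\eqref{eq-quantum-mutation} for $\mu'_k$ purely by replacing $X$ with $Z$, so the induced $\mathbb{Z}$-linear exponent map ${\bf t}' \mapsto {\bf t}$ coincides with the one that appears implicitly in Lemma~\ref{lem-commute}. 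Applying that lemma with $\mathbb{F} = \mathbb{Z}/n\mathbb{Z}$ and $\mathcal{V}_0 = \mathcal{V}_{\rm mut}$ yields ${\bf t} \in \mathcal{B}_\mathcal{D}$. Thus $\nu'_k$ sends the $R$-basis $\{Z^{{\bf t}'}\}_{{\bf t}' \in \mathcal{B}_{\mathcal{D}'}}$ of $\mathcal{Z}^{\rm mbl}_\omega(\mathcal{D}')$ into the $R$-basis $\{Z^{\bf t}\}_{{\bf t} \in \mathcal{B}_\mathcal{D}}$ of $\mathcal{Z}^{\rm mbl}_\omega(\mathcal{D})$, and therefore restricts to an injective $R$-algebra homomorphism between these quantum tori. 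Ore localization (which is available since each $\mathcal{Z}^{\rm mbl}_\omega(\mathcal{D})$ is itself a quantum torus algebra, as noted before Lemma~\ref{lem.nu_sharp_well-defined}) then extends it to an injective skew-field map ${\rm Frac}(\mathcal{Z}^{\rm mbl}_\omega(\mathcal{D}')) \hookrightarrow {\rm Frac}(\mathcal{Z}^{\rm mbl}_\omega(\mathcal{D}))$.

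For the reverse inclusion, I would use that $\mu_k(\mathcal{D}') = \mathcal{D}$ by involutivity, and apply the same reasoning to the monomial transformation $\bar{\nu}'_k : {\rm Frac}(\mathcal{Z}_\omega(\mathcal{D})) \to {\rm Frac}(\mathcal{Z}_\omega(\mathcal{D}'))$ obtained by treating $\mathcal{D}'$ as the starting seed in \eqref{eq-quantum-mutation_Z}, hence with quiver $Q'$ in place of $Q$ in the formula. Lemma~\ref{lem-commute} in this direction gives an injective algebra homomorphism $\bar{\nu}'_k : \mathcal{Z}^{\rm mbl}_\omega(\mathcal{D}) \hookrightarrow \mathcal{Z}^{\rm mbl}_\omega(\mathcal{D}')$. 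Note that $\bar{\nu}'_k$ need not be the two-sided inverse of $\nu'_k$; it only furnishes the symmetric embedding required for the index comparison below.

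To upgrade the first embedding to a bijection, I observe that each of $\nu'_k$ and $\bar{\nu}'_k$ acts on exponent vectors by a unimodular $\mathbb{Z}$-linear automorphism of $\mathbb{Z}^\mathcal{V}$, and that both $\mathcal{B}_\mathcal{D}$ and $\mathcal{B}_{\mathcal{D}'}$ are finite-index sublattices of $\mathbb{Z}^\mathcal{V}$ containing $n\mathbb{Z}^\mathcal{V}$. Since unimodular maps preserve lattice indices, the inclusion $\nu'_k(\mathcal{B}_{\mathcal{D}'}) \subset \mathcal{B}_\mathcal{D}$ gives $[\mathbb{Z}^\mathcal{V} : \mathcal{B}_\mathcal{D}] \le [\mathbb{Z}^\mathcal{V} : \mathcal{B}_{\mathcal{D}'}]$, and the symmetric inclusion $\bar{\nu}'_k(\mathcal{B}_\mathcal{D}) \subset \mathcal{B}_{\mathcal{D}'}$ yields the reverse inequality. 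Equality of the indices forces $\nu'_k(\mathcal{B}_{\mathcal{D}'}) = \mathcal{B}_\mathcal{D}$, so the restriction of $\nu'_k$ is a genuine isomorphism of mutable-balanced subalgebras, which extends to the desired skew-field isomorphism. The main conceptual step is the invocation of Lemma~\ref{lem-commute} at the modulus $n$; I do not expect any substantive obstacle, the rest being routine lattice bookkeeping.
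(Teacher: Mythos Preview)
Your proposal is correct and follows the same approach the paper indicates: the paper simply asserts that the lemma ``follows from Lemma~\ref{lem-commute}'' and places a \qed\ without further detail, so your argument supplies exactly the missing surjectivity step. Your use of the involutivity $\mu_k(\mathcal D')=\mathcal D$ to obtain a second inclusion $\bar\nu'_k(\mathcal B_{\mathcal D})\subset\mathcal B_{\mathcal D'}$, together with the finite-index lattice comparison (both lattices contain $n\mathbb Z^{\mathcal V}$ and the exponent maps are unimodular), is a clean and correct way to close the argument.
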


We define the quantum mutation $\nu_k^{\omega}$ for the $n$-th root Fock-Goncharov algebras, or more precisely for the skew-fields of fractions of their mutable-balanced subalgebras,  
to be the composition 
$$
\nu^\omega_k := \nu^{\sharp\omega}_k \circ \nu'_k ~:~ 
\Fr(
\mathcal{Z}^{\rm mbl}_\omega(\mathcal D'))\xrightarrow{\nu_k'}
      \Fr(
      \mathcal{Z}^{\rm mbl}_\omega(\mathcal D))\xrightarrow{\nu_k^{\sharp \omega}}
      \Fr(
      \mathcal{Z}^{\rm mbl}_\omega(\mathcal D)).$$

\begin{lemma}\label{lem:nu_extends_mu}
    The map $\nu^\omega_k$ extends $\mu^q_k$. \qed
\end{lemma}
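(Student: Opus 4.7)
The plan is to decompose $\nu_k^\omega=\nu_k^{\sharp\omega}\circ\nu_k'$ and $\mu_k^q=\mu_k^{\sharp q}\circ\mu_k'$ and to verify the extension property factor-by-factor under the embedding $\mathcal X_q(-)\hookrightarrow\mathcal Z^{\rm mbl}_\omega(-)$ given by $X_v=Z_v^n$. Since each map is a skew-field homomorphism, it suffices to check agreement on a generating set of $\mathcal X_q(\mathcal D')$, namely $\{X_v'\}_{v\in\mathcal V}$.

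For the monomial part $\nu_k'\supset\mu_k'$, one evaluates $\nu_k'$ on $X_v'=(Z_v')^n$ and compares with $\mu_k'(X_v')$. The case $v=k$ is immediate since both yield $Z_k^{-n}$. For $v\neq k$, one must show that $\nu_k'((Z_v')^n)=\bigl[Z_v Z_k^{[Q(v,k)]_+}\bigr]_\omega^{\,n}$ equals $\mu_k'(X_v')=\bigl[X_v X_k^{[Q(v,k)]_+}\bigr]_q$. This reduces to the general identity that under $X_v=Z_v^n$ and $q=\omega^{n^2}$, a $q$-Weyl-ordered monomial $[X_{v_1}^{a_1}\cdots X_{v_r}^{a_r}]_q$ coincides with the $\omega$-Weyl-ordered monomial $[Z_{v_1}^{na_1}\cdots Z_{v_r}^{na_r}]_\omega$, which in turn follows by iterating $Z_u Z_w=\omega^{2Q(u,w)}Z_w Z_u$ together with the reordering identity $(Z_v Z_k^m)^n=\omega^{-mn(n-1)Q(v,k)}Z_v^n Z_k^{mn}$ (the $\binom{n}{2}$ count). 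The output lies in $\mathrm{Frac}(\mathcal Z^{\rm mbl}_\omega(\mathcal D))$ by Lemma \ref{lem.nu_prime_restricts}, so $\nu_k^{\sharp\omega}$ may be applied to it.

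For the automorphism part $\nu_k^{\sharp\omega}\supset\mu_k^{\sharp q}$, the key observation is that each generator $X_v$ of $\mathcal X_q(\mathcal D)$ is $X_v=Z^{ne_v}$ with exponent vector $ne_v\in n\mathbb Z^{\mathcal V}\subseteq\mathcal B_{\mathcal D}$, so Lemma \ref{lem.nu_sharp_well-defined} applies with ${\bf t}=ne_v$ and yields
$$
\nu_k^{\sharp\omega}(X_v)\;=\;X_v\,F^q\!\bigl(X_k,\;\tfrac1n{\textstyle\sum_{w\in\mathcal V}}Q(k,w)(ne_v)_w\bigr)\;=\;X_v\,F^q(X_k,Q(k,v)),
$$
which by Lemma \ref{lem-F-P} is exactly $\mu_k^{\sharp q}(X_v)=\mathrm{Ad}_{\Psi^q(X_k)}(X_v)$. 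As both $\nu_k^{\sharp\omega}$ and $\mu_k^{\sharp q}$ are skew-field homomorphisms agreeing on a generating set of $\mathcal X_q(\mathcal D)$, they agree on $\mathrm{Frac}(\mathcal X_q(\mathcal D))$, and composing the two factors concludes the proof.

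I do not anticipate a serious obstacle: the only computation requiring genuine care is the Weyl-ordering translation $[X^{\bf a}]_q=[Z^{n{\bf a}}]_\omega$, which is an unavoidable but routine bookkeeping step; everything else is a direct unwinding of the definitions of $\nu_k'$, $\nu_k^{\sharp\omega}$, $\mu_k'$, and $\mu_k^{\sharp q}$, together with the already-established Lemmas \ref{lem.nu_sharp_well-defined} and \ref{lem.nu_prime_restricts}.
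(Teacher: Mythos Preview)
Your proposal is correct and follows the same decomposition $\nu_k^\omega=\nu_k^{\sharp\omega}\circ\nu_k'$ versus $\mu_k^q=\mu_k^{\sharp q}\circ\mu_k'$ that the paper implicitly relies on; the paper simply asserts (just before and after the lemma) that $\nu_k'$ extends $\mu_k'$ and that $\nu_k^{\sharp\omega}$ extends $\mu_k^{\sharp q}$ without spelling out the Weyl-ordering bookkeeping you carry out. Your argument is a faithful elaboration of what the paper leaves as an easy check.
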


One useful property is:
\begin{lemma}[{\cite[Lem.3.27]{Kim21}}]\label{lem:nu_omega_preserves_star}
    The map $\nu^\omega_k$ preserves the $*$-structures. 
\end{lemma}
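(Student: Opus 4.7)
The plan is to exploit the factorization $\nu_k^\omega = \nu_k^{\sharp\omega}\circ \nu_k'$ and show that each of the two factors individually intertwines the $*$-structures on $\Fr(\mathcal{Z}^{\rm mbl}_\omega(\mathcal D'))$ and $\Fr(\mathcal{Z}^{\rm mbl}_\omega(\mathcal D))$. Recall that $*$ is a ring anti-homomorphism fixing the generators $Z_v^{\pm 1}$ and sending $\omega\mapsto\omega^{-1}$; in particular every Weyl-ordered monomial $Z^{\bf t}$ (and hence every $X_k = Z_k^n$) is $*$-invariant by construction of \eqref{Weyl_ordering_Z}. Because $\nu_k'$ is an algebra homomorphism and $*$ is an anti-homomorphism, verifying $\nu_k'\circ *=*\circ \nu_k'$ on the generators $(Z_v')^{\pm 1}$ is enough: both sides applied to a product reduce, after the usual reversal, to the same expression on generators, so the identity propagates. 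This is immediate from \eqref{eq-quantum-mutation_Z}, since $\nu_k'(Z_v')$ is either $Z_k^{-1}$ or a Weyl-ordered monomial, both of which are $*$-fixed; the same holds for the images of $(Z_v')^{-1}$ (cf.\ Lemma \ref{lem.nu_k_prime_preserves_Weyl-ordering}).

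For the automorphism part $\nu_k^{\sharp\omega}$, take ${\bf t} \in \mathcal B_{\mathcal D}$ and set $m = \frac{1}{n}\sum_{v\in\mathcal V} Q(k,v)t_v \in\mathbb Z$. By Lemma \ref{lem.nu_sharp_well-defined}, $\nu_k^{\sharp\omega}(Z^{\bf t}) = Z^{\bf t}\, F^q(X_k,m)$. Using that $*$ is an anti-homomorphism, that $(Z^{\bf t})^* = Z^{\bf t}$, and that $*$ fixes $X_k$ but inverts $q$, I compute
\[
\bigl(\nu_k^{\sharp\omega}(Z^{\bf t})\bigr)^* \;=\; F^q(X_k,m)^*\, Z^{\bf t} \;=\; F^{q^{-1}}(X_k,m)\, Z^{\bf t},
\]
using that the factors of $F^q(X_k,m)$ all commute with one another (they all are polynomials in $X_k$), so the anti-homomorphism does not change their order.

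The key commutation is $X_k Z^{\bf t} = q^{2m} Z^{\bf t} X_k$, which follows from $Z_k^n Z_v = \omega^{2nQ(k,v)}Z_v Z_k^n$ together with the definition of $m$ and $q = \omega^{n^2}$. Using this, for $m>0$,
\[
F^{q^{-1}}(X_k,m)\, Z^{\bf t} \;=\; \prod_{r=1}^{m}(1+q^{-(2r-1)}X_k)\, Z^{\bf t} \;=\; Z^{\bf t}\prod_{r=1}^{m}(1+q^{2m-(2r-1)}X_k),
\]
and the reindexing $r\mapsto m-r+1$ identifies the remaining product with $F^q(X_k,m)$. The case $m<0$ is symmetric (working with inverses of the same factors), and $m=0$ is trivial. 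Hence $(\nu_k^{\sharp\omega}(Z^{\bf t}))^* = Z^{\bf t}\, F^q(X_k,m) = \nu_k^{\sharp\omega}(Z^{\bf t}) = \nu_k^{\sharp\omega}((Z^{\bf t})^*)$, which establishes $*$-compatibility of $\nu_k^{\sharp\omega}$ on the spanning set of $\mathcal Z^{\rm mbl}_\omega(\mathcal D)$, hence on the whole skew-field.

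I do not expect a genuine obstacle here: the substance is the single commutation identity $X_k Z^{\bf t} = q^{2m} Z^{\bf t} X_k$ that holds precisely because ${\bf t}$ lies in $\mathcal B_{\mathcal D}$, making $m$ an integer and the quantum dilogarithm action well-defined. The most delicate bookkeeping is the reindexing of the $F^q$-factors after applying $*$; this is essentially the same calculation as in \cite[Lem.\ 3.27]{Kim21} for $n=3$, and the argument is purely formal and goes through verbatim for general $n$ once mutable-balancedness guarantees $m\in\mathbb Z$.
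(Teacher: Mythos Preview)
Your proof is correct. The paper itself omits the proof entirely, deferring to the citation \cite[Lem.~3.27]{Kim21}; your argument is precisely the natural one that citation points to, namely checking $*$-compatibility of each factor $\nu_k'$ and $\nu_k^{\sharp\omega}$ separately, with the only nontrivial step being the reindexing that identifies $F^{q^{-1}}(X_k,m)\,Z^{\bf t}$ with $Z^{\bf t}\,F^q(X_k,m)$ via the commutation $X_k Z^{\bf t} = q^{2m} Z^{\bf t} X_k$.
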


    In order to apply to our situation, consider a triangulable pb surface $\frak{S}$ with a triangulation $\lambda$.
    Letting $\mathcal{V}$ be $V_\lambda$ and $\mathcal{V}_{\rm mut}$ be the subset of $V_\lambda$ consisting of the vertices contained in the interior of $\frak{S}$, one obtains a cluster $\mathcal{X}$-seed $\mathcal{D}_\lambda := (\Gamma_\lambda,(X_v)_{v\in V_\lambda})$.
    We denote the $n$-root Fock-Goncharov algebra and its mutable-balanced subalgebra as
    $$
    \mathcal{Z}_\omega(\fS,\lambda) = \mathcal{Z}_\omega(\mathcal{D}_\lambda), \qquad
    \mathcal{Z}_\omega^{\rm mbl}(\fS,\lambda) = \mathcal{Z}^{\rm mbl}_\omega(\mathcal{D}_\lambda).
$$
On the other hand, recall the balanced subalgebra $\mathcal{Z}^{\rm bl}_\omega(\fS,\lambda)$ of $\mathcal{Z}_\omega(\frak{S}, \lambda)$ defined in \eqref{Z_bl_omega}.

\begin{lemma}
\label{lem:bl_in_mbl}
  We have $\mathcal{Z}^{\rm bl}_\omega(\fS,\lambda) \subset \mathcal{Z}^{\rm mbl}_\omega(\fS,\lambda)$.
\end{lemma}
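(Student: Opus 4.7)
The plan is to show the containment at the level of exponent vectors, i.e., that $\mathcal{B}_\lambda \subset \mathcal{B}_{\mathcal{D}_\lambda}$, whereupon the desired inclusion of algebras follows directly from the definitions of $\mathcal{Z}^{\rm bl}_\omega(\fS,\lambda)$ in \eqref{Z_bl_omega} and $\mathcal{Z}^{\rm mbl}_\omega(\fS,\lambda) = \mathcal{Z}^{\rm mbl}_\omega(\mathcal{D}_\lambda)$ in Definition \ref{def.mbl}, since both algebras are defined as the $R$-span of Weyl-ordered monomials $Z^{\bf t}$ indexed by the respective subgroups of $\mathbb{Z}^{V_\lambda}$.

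Since $\mathcal{B}_{\mathcal{D}_\lambda}$ is a subgroup of $\mathbb{Z}^{V_\lambda}$ and $\mathcal{B}_\lambda$ is generated as a group by the balanced vectors, it suffices to verify that every balanced vector ${\bf k} \in \mathbb{Z}^{V_\lambda}$ satisfies the condition defining $\mathcal{B}_{\mathcal{D}_\lambda}$, namely
\[
\sum_{v \in V_\lambda} Q_\lambda(u,v)\, k_v \equiv 0 \pmod{n} \qquad \text{for all } u \in \mathcal{V}_{\rm mut},
\]
where $\mathcal{V}_{\rm mut}$ is the set of vertices in the interior of $\fS$. To this end, I would apply Lemma \ref{lem-balanced-H}, which asserts that ${\bf k} H_\lambda \in (n\mathbb{Z})^{V_\lambda}$, viewing ${\bf k}$ as a row vector.

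The key observation is that for any interior vertex $u$ and any $v \in V_\lambda$, the vertices $u$ and $v$ cannot lie in the same boundary edge (since $u$ is interior), so by the definition of $H_\lambda$ in \S\ref{subsec.quantum_trace_maps} we have $H_\lambda(v,u) = Q_\lambda(v,u)$. Using the antisymmetry of $Q_\lambda$, this gives, for $u \in \mathcal{V}_{\rm mut}$,
\[
({\bf k} H_\lambda)_u \;=\; \sum_{v \in V_\lambda} k_v\, H_\lambda(v,u) \;=\; \sum_{v \in V_\lambda} k_v\, Q_\lambda(v,u) \;=\; -\sum_{v \in V_\lambda} Q_\lambda(u,v)\, k_v.
\]
Lemma \ref{lem-balanced-H} says the left-hand side lies in $n\mathbb{Z}$, hence $\sum_v Q_\lambda(u,v) k_v \in n\mathbb{Z}$ for every $u \in \mathcal{V}_{\rm mut}$. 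Thus ${\bf k} \in \mathcal{B}_{\mathcal{D}_\lambda}$, finishing the inclusion of subgroups and hence of algebras.

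There is no real obstacle here; the only subtle point is the distinction between $H_\lambda$ and $Q_\lambda$ on boundary edges, which is precisely why the $\mathcal{B}_{\mathcal{D}_\lambda}$-condition only needs to hold for mutable (interior) vertices $u$, where the two matrices agree column-wise. Were the condition required for all $u$, one would have to worry about the $\pm 1$ boundary contributions to $H_\lambda$, and the containment would generally fail.
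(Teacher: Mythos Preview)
Your proof is correct and follows essentially the same approach as the paper: reduce to showing $\mathcal{B}_\lambda \subset \mathcal{B}_{\mathcal{D}_\lambda}$, invoke Lemma~\ref{lem-balanced-H} to get ${\bf k}H_\lambda \in (n\mathbb{Z})^{V_\lambda}$, and then observe that for a mutable (interior) vertex $u$ one has $H_\lambda(v,u) = Q_\lambda(v,u) = -Q_\lambda(u,v)$ so the defining congruence for $\mathcal{B}_{\mathcal{D}_\lambda}$ holds. Your closing remark explaining why the restriction to mutable vertices is essential is a nice addition but not needed for the argument itself.
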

\begin{proof}
It suffices to show $\mathcal B_\lambda\subset \mathcal B_{
\mathcal{D}_\lambda}$.
    Suppose that ${\bf t}=(
    t_v)_{v\in V_\lambda}\in\mathcal B_\lambda$ and $k\in\mathcal V_{
    \rm mut}$. Lemma \ref{lem-balanced-H} implies that
    $\sum_{v\in V_\lambda} 
    t_v H_\lambda(v,k) = 0\in 
    \mathbb{Z}/n\mathbb{Z}$. From the definition of $H_\lambda$, it follows that $H_\lambda(v,k)= 
    Q_\lambda(v,k)= - Q_\lambda(k,v)$ for any $v\in V_\lambda$. Then we have 
    $$\sum_{v\in V_\lambda} Q_\lambda(k,v) 
    t_v = 0\in
    \mathbb{Z}/n\mathbb{Z}.$$
    This shows that ${\bf t}\in  \mathcal B_{
    \mathcal{D}_\lambda}.$
\end{proof}

      \begin{remark}\label{rem-balanced-mutable}
          When the triangulable pb surface $\frak{S}$ has empty boundary, one 
          would obtain $\mathcal{B}_\lambda = \mathcal{B}_{\mathcal{D}_\lambda}$  
          and hence $\mathcal{Z}^{\rm bl}_\omega(\fS,\lambda) = \mathcal{Z}^{\rm mbl}_\omega(\fS,\lambda)$
          if one could prove the converse of Lemma \ref{lem-balanced-H}, upgrading it to an if-and-only-if statement. One can try to come up with a modified version of $\mathcal{B}_\mathcal{D}$ for a general cluster $\mathcal{X}$-seed $\mathcal{D}$ that involves additional equations about the frozen vertices, so that it matches $\mathcal{B}_\lambda$ in the case of a seed coming from a triangulation $\lambda$ of a triangulable pb surface $\fS$ with boundary as well. We do not undertake 
          these tasks in the present paper. See \cite[Rem.3.25]{Kim21} for a related question.
      \end{remark}

Eventually, in the next section \S\ref{sec:compatibility} we will be applying the $n$-th root quantum mutations $\nu^\omega_k$ to the balanced subalgebras $\mathcal{Z}^{\rm bl}_\omega(\fS,\lambda)\subset \mathcal{Z}^{\rm mbl}_\omega(\fS,\lambda)$, or more precisely to their skew-fields of fractions ${\rm Frac}(\mathcal{Z}^{\rm bl}_\omega(\fS,\lambda))$, which make sense since $\mathcal{Z}^{\rm bl}_\omega(\fS,\lambda)$ is a quantum torus algebra, which in turn holds because $\mathcal{B}_\lambda \subset \mathcal{B}_{D_\lambda}$ is a finitely generated free $\mathbb{Z}$-module.

Still in this section, we will work on the mutable-balanced algebras $\mathcal{Z}^{\rm mbl}_\omega(\fS,\lambda)$ and the skew-fields of fractions thereof, for triangulations $\lambda$ of a triangulable pb surface $\fS$. In the following subsection, we will compose the maps $\nu^\omega_k$ studied in the present subsection to construct a coordinate change isomorphism between ${\rm Frac}(\mathcal{Z}^{\rm mbl}_\omega(\fS,\lambda))$ for different triangulations $\lambda$.

\subsection{Coordinate change isomorphisms 
for change of triangulations}\label{subsec:coordinate_change_isomorphisms_for_change_of_triangulations}

Let $\fS$ be a triangulable pb surface with a triangulation $\lambda$. In the last subsection we considered a particular cluster $\mathcal{X}$-seed $\mathcal{D}_\lambda = (\Gamma_\lambda, (X_v)_{v\in V_\lambda})$, associated to a special quiver $\Gamma_\lambda$, with certain vertex sets $\mathcal{V} = V_\lambda$ and $\mathcal{V}_{\rm mut}$. This cluster $\mathcal{X}$-seed comes from a special coordinate chart on a certain moduli space $\mathscr{P}_{{\rm PGL}_n,\fS}$ studied by Fock and Goncharov \cite{FG06}, and by Goncharov and Shen \cite{GS19}. This space parametrizes ${\rm PGL}_n$-local systems on $\fS$ together with some extra data at the punctures and the boundary of $\fS$. The stated ${\rm SL}_n$-skein algebra $\mathscr{S}_\omega(\fS)$ plays a crucial role in understanding the deformation quantization of this space. In \cite{FG06,GS19}, the authors construct a coordinate chart for this space per each triangulation $\lambda$ of $\fS$, whose coordinate variables are denoted by $X_v$, enumerated by the vertices $v$ of the quiver $\Gamma_\lambda$. They further show that these special charts for different triangulations are related to each other by a special sequence of mutations. In the present paper we do not recall the definition of the moduli space $\mathscr{P}_{{\rm PGL}_n,\fS}$ nor the construction of the coordinates, but only recall the sequence of mutations.

We begin with the elementary change of triangulations, namely with the case when $\lambda$ and $\lambda'$ are triangulations of $\fS$ that differ exactly by one edge, say $e$. We say that each of these triangulations are obtained from each other by a {\bf flip} at the edge $e$. Consider the ideal quadrilateral formed by the two ideal triangles of $\lambda$ sharing $e$ (it may not actually be a quadrilateral, but that doesn't matter here). Let $V_{\lambda;e}$ be the set of all vertices of $V_\lambda$ that either lie in the interior of one of these two triangles or lie on $e$; so $V_{\lambda;e}$ consists of $(n-1)^2$ vertices. For each $i=0,1,2,\ldots,n-2$, we will define a subset $V_{\lambda;e}^{(i)}$ of $V_{\lambda;e}$. Draw this ideal quadrilateral for $\lambda$ as in Figure \ref{Fig;mutation_sequence_for_flip}, so that $e$ is the `middle vertical line', and after flipping at $e$, the flipped $e$ would be the `middle horizontal line'. As in Figure \ref{Fig;mutation_sequence_for_flip}, for each vertex $v$ in $V_{\lambda,e}$ one can define the vertical distance $d_{\rm vert}(v) \in \mathbb{N}$ from the middle horizontal line, and the horizontal distance $d_{\rm hori}(v) \in \mathbb{N}$ from the middle vertical line. Define
$$
V_{\lambda;e}^{(i)} := \left\{v\in V_{\lambda;e} \, \left| \, \begin{array}{ll} d_{\rm vert}(v) \le i, &  d_{\rm vert}(v) \equiv i \, (\mbox{mod } 2), \\ d_{\rm hori}(v) \le n-2-i, & d_{\rm hori}(v) \equiv n-2-i (\mbox{mod } 2) \end{array} \right. \right\}.
$$
See Figure \ref{Fig;mutation_sequence_for_flip}; so each $V^{(i)}_{\lambda;e}$ can be viewed as forming the grid points of a rectangle, consisting of $(i+1)(n-i-1)$ points. Notice that these sets for different $i$ are not necessarily disjoint with each other. For example, if $n\ge 4$, then $V_{\lambda;e}^{(0)}$ and $V_{\lambda;e}^{(2)}$ have $n-3$ vertices in common.

\begin{figure}[h]
    \centering
    \scalebox{1.0}{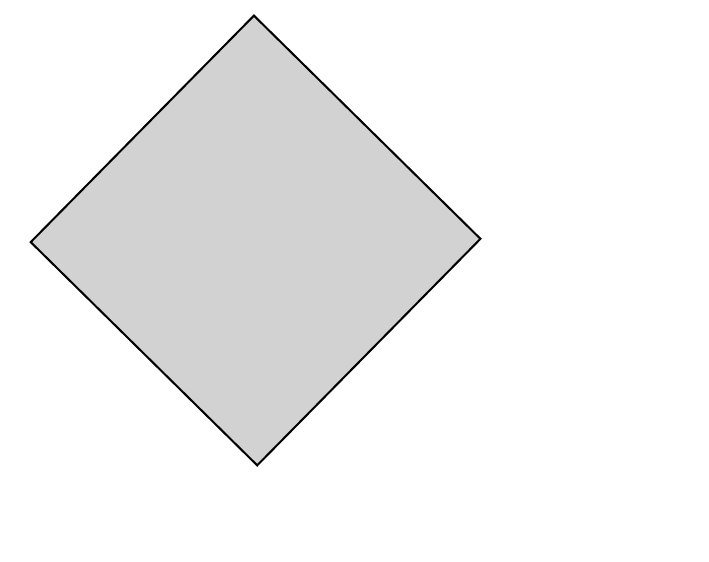}
    \caption{Vertices of the $n$-triangulation quiver involved in the flip of triangulations; the above example picture is for the case $n=4$}\label{Fig;mutation_sequence_for_flip}
\end{figure}

The sought-for mutation sequence of Fock, Goncharov and Shen consists of first mutations at all vertices of $V_{\lambda;e}^{(0)}$ in any order, then mutations at all vertices of $V_{\lambda;e}^{(1)}$ in any order, etc., then lastly mutations at all vertices of $V_{\lambda;e}^{(n-2)}$ in any order. Regardless of the orders chosen, this mutation sequence connects the coordinate chart $\mathcal{D}_\lambda$ of $\mathscr{P}_{{\rm PGL}_n,\fS}$ for $\lambda$ to the chart $\mathcal{D}_{\lambda'}$ for $\lambda'$. In particular, the length of this mutation sequence is $\sum_{i=0}^{n-2} (i+1)(n-1-i) = \sum_{j=1}^{n-1} j(n-j) = \frac{1}{6}(n^3-n) =:r$. Let us denote the corresponding sequence of vertices as
$$
v_1,v_2,\ldots,v_r.
$$
So, the first $n-1$ of them would be the elements of $V^{(0)}_{\lambda;e}$, and we have
\begin{align}
\label{two_seeds_connected_by_sequence_of_mutations}
\mathcal{D}_{\lambda'} = \mu_{v_r} \cdots \mu_{v_2} \mu_{v_1} (\mathcal{D}_\lambda)    
\end{align}
and in particular,
$$
\Gamma_{\lambda'} = \mu_{v_r} \cdots \mu_{v_2} \mu_{v_1} (\Gamma_\lambda).
$$

At the quantum level, we first define the quantum coordinate change isomorphism for the skew-fields of fractions of the usual Fock-Goncharov algebras
\begin{equation}\label{eq-Theta1}
\Phi_{
\lambda \lambda'}^q:=\mu_{v_1}^{q}\circ\cdots\circ\mu_{v_r}^{q}\colon\Fr(\mathcal X_q(\fS,\lambda'))\rightarrow 
\Fr(\mathcal X_q(\fS,\lambda)).
\end{equation}
Note that the order of the mutation sequence seems to be reversed because of the contravariant nature of the classical and quantum coordinate change maps. We then define the $n$-root balanced version of the quantum coordinate change isomorphism to be
\begin{equation}\label{eq-Theta2}
\Theta_{
\lambda \lambda'}^{\omega}:=\nu_{v_1}^{\omega}\circ\cdots\circ\nu_{v_r}^{\omega}\colon\Fr(\mathcal Z_\omega^{\rm mbl}(\fS,\lambda'))\rightarrow 
\Fr(\mathcal Z_\omega^{\rm mbl}(\fS,\lambda)).
\end{equation}

Suppose now that $\lambda$ and $\lambda'$ are any two triangulations of $\fS$. A {\bf triangulation sweep} connecting 
$\lambda$ and $\lambda'$ is a sequence of triangulations
$\Lambda=(\lambda_1,\cdots,\lambda_m)$ such that 
$\lambda_1=\lambda$, $\lambda_m=\lambda'$, and $\lambda_{i+1}$ is obtained from $\lambda_i$ by a flip for each $1\leq i\leq m-1$. It is well known that for any $\lambda$ and $\lambda'$, there exists a triangulation sweep $\Lambda$ connecting $\lambda$ and $\lambda'$ (\cite{Lab09}). For any such triangulation sweep $\Lambda$, we define the corresponding quantum coordinate change isomorphisms
\begin{equation}\label{eq-Theta-change1}
\Phi_\Lambda^q :=\Phi_{
\lambda_1\lambda_2}^q\circ\cdots\circ\Phi_{
\lambda_{m-1}\lambda_m}^q\colon\Fr(
\mathcal{X}_q(\fS,\lambda'))\rightarrow 
\Fr(
\mathcal{X}_q(\fS,\lambda)),
\end{equation}
\begin{equation}\label{eq-Theta-change2}
\Theta_{\Lambda}^{\omega}  :=\Theta_{
\lambda_1\lambda_2}^{\omega}\circ\cdots\circ\Theta_{
\lambda_{m-1}\lambda_m}^{\omega}\colon\Fr(
\mathcal{Z}^{\rm mbl}_\omega(\fS,\lambda'))\rightarrow 
\Fr(
\mathcal{Z}^{\rm mbl}_\omega(\fS,\lambda)).
\end{equation}

\begin{proposition} [\cite{BZ,FG09a,KN}; see also \cite{Kim_phase,Kim21}]
    $\Phi^q_\Lambda$ depends only on $\lambda$ and $\lambda'$.
\end{proposition}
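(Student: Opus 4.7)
The plan is to reduce the claim to the invariance of $\Phi^q_\Lambda$ under the elementary relations in the flip groupoid of triangulations of $\fS$. It is classical (see e.g.~Hatcher and Penner) that any two triangulation sweeps with the same endpoints $\lambda,\lambda'$ can be connected by a finite sequence of the following three local moves: (i) the \emph{involution move}, which replaces a pair of consecutive flips at the same edge by the trivial (empty) sweep; (ii) the \emph{commutation move}, which swaps two consecutive flips at edges that are not adjacent to a common triangle; and (iii) the \emph{pentagon move}, which replaces a sweep of two flips inside a pentagon by the complementary sweep of three flips. Therefore it suffices to verify that, for any triangulation sweep $\Lambda$ realizing one of (i)--(iii) as a closed loop (i.e.\ $\lambda_1=\lambda_m$), the composite $\Phi^q_\Lambda$ built from the FGS mutation sequences equals the identity on $\mathrm{Frac}(\mathcal{X}_q(\fS,\lambda_1))$.

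For case (ii), the two FGS flip sequences operate on disjoint subsets of $V_\lambda$ (namely $V_{\lambda;e}$ and $V_{\lambda;e'}$ lie in quadrilaterals with disjoint interiors), so the corresponding mutation sequences act on disjoint subquivers, and their quantum mutations commute by inspection of the formulas in Definition \ref{def.quantum_X-mutation}. For case (i), one has to show that the composition of the two FGS flip sequences for a flip at $e$ and then its reverse equals the identity; this can be verified using the standard identity $\mu^q_k\circ\mu^q_k = \mathrm{id}$ at each mutable vertex together with the explicit layering of the FGS sequence described around Figure \ref{Fig;mutation_sequence_for_flip}, as the second flip traverses the grid $V_{\lambda;e}^{(i)}$ in the reverse order. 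Case (iii), the pentagon move, is the only genuinely nontrivial one: it amounts to the assertion that five composed FGS flip sequences around a pentagon give the identity quantum mutation on $\mathrm{Frac}(\mathcal{X}_q(\fS,\lambda))$. This identity reduces via Definition \ref{def.quantum_X-mutation} and Lemma \ref{lem-F-P} to repeated applications of the quantum pentagon identity for the compact quantum dilogarithm
$$
\Psi^q(X)\,\Psi^q(Y)\;=\;\Psi^q(Y)\,\Psi^q(q\,XY)\,\Psi^q(X)
\qquad (XY = q^2 YX),
$$
together with bookkeeping of Weyl-ordered monomial transformations.

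The main obstacle is case (iii): while the underlying quantum pentagon for $\Psi^q$ is a classical identity, checking that the specific FGS flip sequence of length $\tfrac{1}{6}(n^3-n)$ manifests the pentagon relation requires a careful matching of the mutation orders on the two sides. This matching has, however, already been carried out in the cited works \cite{BZ,FG09a,KN}, and is reformulated in the language used here in \cite{Kim_phase,Kim21}. Accordingly, the present proposition follows directly by invoking their results once the reduction to (i)--(iii) is made. Since no ingredient of the argument involves the $n$-th root generators or the balanced subalgebras, the proposition is a purely cluster-theoretic statement about $\mathcal{X}_q(\fS,\lambda)$ and requires no new computation here.
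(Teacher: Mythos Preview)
Your proposal is essentially correct and, like the paper, is really a sketch that defers the hard computation to the cited literature. However, your route differs from the paper's in a meaningful way.

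The paper does not attempt to verify the three flip-groupoid relations one by one. Instead, after noting that it suffices to show that any identity among classical coordinate-change (or mutation) maps lifts to the quantum ones, it invokes the decomposition $\mu^q_k = \mu^{\sharp q}_k \circ \mu'_k$ (and a signed variant) to separate any such identity into a monomial-transformation identity among the $\mu'_k$ and a conjugation identity among the $\mu^{\sharp q}_k = \mathrm{Ad}_{\Psi^q(X_k)}$. The former is straightforward; the latter is the content of \cite{KN,Kim_phase}. The point is that one never has to unpack what the $\tfrac{1}{6}(n^3-n)$-step FGS sequence actually does inside a pentagon: once the classical identity is known (it is, by construction of the FGS sequence), the quantum one follows. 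Your approach, by contrast, keeps the flips as the basic units and must in principle track how the long FGS sequences interact across the pentagon relation---considerably more bookkeeping, which you (reasonably) also defer to the literature.

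One imprecision worth noting: your treatment of case (i) is not quite right. The FGS sequence for the return flip $\lambda'\to\lambda$ is defined intrinsically with respect to $\lambda'$, and it is not literally the reverse of the sequence for $\lambda\to\lambda'$; in particular, the claim that the second flip ``traverses the grid $V_{\lambda;e}^{(i)}$ in the reverse order'' together with pointwise $\mu^q_k\circ\mu^q_k=\mathrm{id}$ does not by itself give the involution. What is true is that the composite mutation sequence classically returns to the original seed, and then one again needs the general lifting principle (or a direct check) to conclude the quantum identity. So even your case (i) ultimately leans on the same mechanism the paper isolates.
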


In order to prove this proposition, one should show that any relation satisfied by classical coordinate change maps for flips, say by the maps $\Phi^1_{\lambda \lambda'}$, such as the pentagon relation, is also satisfied by the quantum counterparts $\Phi^q_{\lambda\lambda'}$. It suffices to show that any relation satisfied by classical mutation maps $\mu^1_k$ is satisfied by the quantum counterparts $\mu^q_k$. One could use the decomposition $\mu^q_k = \mu^{\sharp q}_k \circ \mu'_k$ for each quantum mutation $\mu^q_k$. Then, for a sought-for equation among $\mu^q_k$, one can move all $\mu'_k$ to the left (or right), with the help of \eqref{eq-quantum-mutation}. By also exploiting a slight variant of the decomposition $\mu^q_k = \mu^{\sharp q}_k \circ \mu'_k$ (a `signed' decomposition), one can ensure that the problem boils down to proving an equation among $\mu'_k$'s only, and an equation among $\mu^{\sharp q}_k$'s only. We refer the readers to \cite{KN,Kim_phase} how this argument works. Essentially the same argument works for the balanced $n$-th root quantum mutations, yielding the following.

\begin{proposition}
\label{prop:Theta_omega_consistency}
    $\Theta^\omega_\Lambda$ depends only on $\lambda$ and $\lambda'$. \qed
\end{proposition}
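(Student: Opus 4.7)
The plan is to follow exactly the strategy sketched in the paragraph preceding the proposition for $\Phi^q_\Lambda$, and verify that every step of that argument transplants without modification to the balanced $n$-th root setting $\nu^\omega_k = \nu^{\sharp\omega}_k \circ \nu'_k$. First I would reduce the claim to a finite list of defining relations of the Ptolemy (flip) groupoid, namely the pentagon relation and the commutation/square relations between disjoint flips. Since $\Theta^\omega_{\lambda\lambda'}$ is defined along a specific sequence of mutations as in \eqref{two_seeds_connected_by_sequence_of_mutations}--\eqref{eq-Theta2}, showing independence of the sweep $\Lambda$ is equivalent to showing that whenever two sequences of flips connect the same pair $(\lambda,\lambda')$, the corresponding compositions of quantum $n$-th root mutations agree on $\Fr(\mathcal{Z}^{\rm mbl}_\omega(\fS,\lambda'))$. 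Well-definedness of these compositions in the mutable-balanced setting is already ensured by Lemmas \ref{lem.nu_sharp_well-defined} and \ref{lem.nu_prime_restricts}.

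Next, fix such an identity, say $\nu^\omega_{v_1}\circ\cdots\circ\nu^\omega_{v_s} = \nu^\omega_{w_1}\circ\cdots\circ\nu^\omega_{w_t}$. Using the factorization $\nu^\omega_k = \nu^{\sharp\omega}_k\circ\nu'_k$ and the signed variant discussed in \cite{KN,Kim_phase}, I would push every $\nu'_k$ factor to the right (or left) through the $\nu^{\sharp\omega}$ factors, using the commutation $\nu'_j\circ\nu^{\sharp\omega}_k = \nu^{\sharp\omega}_{k'}\circ\nu'_j$ where $\nu^{\sharp\omega}_{k'} = \Ad_{\Psi^q(\nu'_j(X_k))}$. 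The crucial point, already observed for $\mu^q_k$, is that $\nu'_j(X_k)$ lies in the ordinary Fock--Goncharov subalgebra $\mathcal{X}_q(\cdot)\subset \mathcal{Z}^{\rm mbl}_\omega(\cdot)$, because $\nu'_j$ sends the variables $X_v = Z_v^n$ to Weyl-ordered Laurent monomials in the $X$-variables (by Lemma \ref{lem.nu_k_prime_preserves_Weyl-ordering}). Consequently the identity splits into two independent identities: one among monomial transformations $\nu'_k$, and one among automorphisms $\nu^{\sharp\omega}_k$ that only involve quantum dilogarithms of $X$-variables.

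The identity among the $\nu'_k$'s is a purely combinatorial statement about Weyl-ordered monomials and is verified by the same computation that proves it in the classical and in the ordinary-quantum $\Phi^q_\Lambda$ setting; Lemma \ref{lem.nu_k_prime_preserves_Weyl-ordering} guarantees that Weyl-ordering is preserved throughout, so the bookkeeping goes through verbatim. The identity among the $\nu^{\sharp\omega}_k$'s is an identity of operators of the form $\Ad_{\Psi^q(X_v)}$ acting on $\Fr(\mathcal{Z}^{\rm mbl}_\omega(\cdot))$; by Lemma \ref{lem.nu_sharp_well-defined} such an operator is completely determined by the integer $m = \frac{1}{n}\sum_v Q(k,v)t_v$ attached to each Weyl-ordered monomial $Z^{\bf t}$, and this datum depends only on the commutation between $X_k$ and $Z^{\bf t}$ inside $\mathcal{Z}^{\rm mbl}_\omega(\cdot)$. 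Since these $X$-variable commutations are identical to those in $\mathcal{X}_q(\cdot)$, the required identities are exactly the pentagon and commutation identities for compositions of $\Ad_{\Psi^q(X_k)}$ that were established in the proof of the preceding proposition.

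The main obstacle, as is often the case in such generalizations, is not a genuine new difficulty but the careful verification that the signed decomposition of $\nu^\omega_k$ and the conjugation formulas carrying $\nu^{\sharp\omega}$'s past $\nu'$'s all restrict to the mutable-balanced skew-field, so that no element ever escapes from $\Fr(\mathcal{Z}^{\rm mbl}_\omega(\cdot))$. This is ultimately guaranteed by Lemmas \ref{lem-commute}, \ref{lem.nu_sharp_well-defined}, and \ref{lem.nu_prime_restricts}, which together assert that the mutable-balanced condition is preserved by $\nu'_k$ and that $\nu^{\sharp\omega}_k$ acts on the mutable-balanced skew-field exactly through the integer $m$ built from $Q$-adjacency data. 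Once these compatibilities are in place, the proof is, as claimed, essentially identical to that of the previous proposition.
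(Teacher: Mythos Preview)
Your proposal is correct and follows essentially the same approach as the paper: the paper itself offers no separate proof beyond the preceding paragraph, which sketches exactly the strategy you describe (decompose each $\nu^\omega_k$ as $\nu^{\sharp\omega}_k\circ\nu'_k$, use the signed decomposition of \cite{KN,Kim_phase} to separate the monomial and dilogarithm parts, and observe that the resulting identities are the same as those already established for $\mu^q_k$), and then simply asserts that ``essentially the same argument works for the balanced $n$-th root quantum mutations.'' Your write-up supplies the verifications (via Lemmas \ref{lem-commute}, \ref{lem.nu_k_prime_preserves_Weyl-ordering}, \ref{lem.nu_sharp_well-defined}, \ref{lem.nu_prime_restricts}) that the paper leaves implicit.
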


Therefore, when we do not have to keep track of a specific triangulation sweep, we can write
$$
\Phi^q_{\lambda\lambda'} = \Phi^q_\Lambda \quad\mbox{and}\quad \Theta^\omega_{\lambda\lambda'} = \Theta^\omega_\Lambda.
$$
One obvious remark: 
\begin{remark}\label{rem-extension}
For any two triangulations $\lambda$ and $\lambda'$, the balanced $n$-th root quantum coordinate change map $\Theta^\omega_{\lambda\lambda'}$ extends the quantum coordinate change map $\Phi^q_{\lambda\lambda'}$.
\end{remark}

A crucial property we shall develop and use about the balanced quantum coordinate change maps is the compatibility with cutting and gluing of a pb surface along an ideal arc.

\def\cut{\mathsf{Cut}}
\def\pr{{\bf pr}}

Still, let $\fS$ be a triangulable pb surface with a triangulation $\lambda$. 
In principle, one can cut the surface $\fS$ along any ideal arc, but here let us assume that $e$ is a non-boundary edge of the triangulation $\lambda$. As in \S\ref{sub-splitting}, we denote by  
$\cut_e(\fS)$ 
the pb surface obtained from $\fS$ by cutting along edge $e$, and denote the projection map by
$$
\pr_e : \cut_e(\fS) \to \fS,
$$
so that $\pr_e^{-1}({e})$ consists of two ideal arcs $e',e''$ of $\cut_e\fS$. In \S\ref{sub-splitting} we studied the induced splitting homomorphism $\mathbb{S}_e$ between the stated ${\rm SL}_n$-skein algebras and that between the reduced stated ${\rm SL}_n$-skein algebras; here we will need the latter
$$
\mathbb{S}_e : \rS(\fS) \to \rS(\cut_e(\fS)).
$$
We further need to investigate a `splitting map' between the ($n$-th root)  Fock-Goncharov algebras. Note that  
$\lambda_e = (\lambda\setminus\{e\})\cup\{e',e''\}$ is a triangulation of
$\cut_e(\fS)$. We say $\lambda_e$ is induced from $\lambda$.
For a small
vertex $v$ in $e$, i.e. $v\in V_\lambda$ lying in $e$, we have $\pr^{-1}_{e}(v) = \{v',v''\}$, where $v'$ and $v''$ are small vertices of $\lambda'$. 
There is an algebra embedding
\begin{align}\label{eq-splitting-torus}
    \mathcal S_e
    : \mathcal{Z}_\omega(\fS,\lambda)\rightarrow
    \mathcal{Z}_\omega(\cut_e(\fS),\lambda_e)
\end{align}
defined on the generators $Z_v$, $v\in V_\lambda$, by
\begin{align}
\label{cutting_homomorphism_for_Z_omega}
    \mathcal S_e
    (
    Z_v)=
    \begin{cases}
        Z_v & \text{ if $v$ is not contained in $e$},\\
        [
        Z_{v'} Z_{v''}] & \text{ if $v$ is contained in $e$ and 
        $\pr^{-1}_{e}(v) = \{v',v''\}$},
    \end{cases}
\end{align}
where $[\sim]$ is the Weyl-ordered product \eqref{Weyl_ordering_Z}; since we are not allowing self-folded triangles in this paper, in fact $Z_{v'}$ commutes with $Z_{v''}$ in the second case, and hence $\mathcal{S}_e(Z_v) = Z_{v'}Z_{v''}$.

The following statement is about the compatibility of the quantum trace map $\overline{\rm tr}_\lambda 
   $ (Theorem \ref{thm.quantum_trace}) with the splitting homomorphisms.

\begin{theorem}[\cite{LY23}]\label{thm-trace-cut}
    The following diagram commutes
    \begin{equation*}
\begin{tikzcd}
\rdS \arrow[r, "\mathbb S_e"]
\arrow[d, "\tr"]  
&  \overline{\cS}_{\omega}(\cut_e(\fS)) \arrow[d, "\overline{\rm tr}_{\lambda_e}"] \\
 \mathcal{Z}_\omega(\fS,\lambda)
 \arrow[r, "\mathcal S_e"] 
&  
\mathcal{Z}_\omega(\cut_e(\fS),\lambda_e),
\end{tikzcd}
\end{equation*}
    where $\mathbb S_e$ is the splitting homomorphism defined in subsection \ref{sub-splitting}.
\end{theorem}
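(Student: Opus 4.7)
The plan is to verify commutativity on a generating family of $\overline{\mathscr{S}}_\omega(\fS)$, leveraging that both $\overline{\rm tr}_{\lambda_e}\circ \mathbb{S}_e$ and $\mathcal{S}_e\circ \overline{\rm tr}_\lambda$ are $R$-algebra homomorphisms between the same pair of algebras. The quantum traces are algebra maps by Theorem \ref{thm.quantum_trace}, $\mathbb{S}_e$ is the splitting homomorphism recalled in \S\ref{sub-splitting}, and $\mathcal{S}_e$ is easily seen to be an algebra embedding because our standing hypothesis excludes self-folded triangles, which forces $Z_{v'}$ and $Z_{v''}$ to commute, and hence $\mathcal{S}_e(Z_v)=Z_{v'}Z_{v''}$ is an honest Weyl product.

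First I would isotope each test stated $n$-web $\alpha$ into a normal form with respect to $\lambda$: transverse intersection with $e \times (-1,1)$, $n$-valent vertices disjoint from $e$, and all heights along $e$ distinct. Then I would further split $\alpha$ along every edge of $\lambda$ distinct from $e$. Applying the desired statement inductively to those simpler cuts, the verification reduces to the case where $\fS$ is the union of the two ideal triangles adjacent to $e$, i.e.\ a quadrilateral or a once-punctured bigon, and $\alpha$ is a generator of its reduced stated skein algebra. On this model surface, the L\^e--Yu quantum trace can be described explicitly as a sum indexed by consistent colorings along $n$-paths in the dual network of the $n$-triangulation, as in \cite{LY23,SS17}. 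The composition $\mathcal{S}_e\circ \overline{\rm tr}_\lambda$ expands each edge generator $Z_v$ with $v \in e$ as $Z_{v'}Z_{v''}$, while $\overline{\rm tr}_{\lambda_e}\circ \mathbb{S}_e$ first sums over states $s\colon \alpha\cap e\to\{1,\ldots,n\}$ and then evaluates two triangle traces independently; both procedures yield the same sum over admissible colorings state by state, since the local boundary relations \eqref{wzh.five}--\eqref{wzh.eight} are designed precisely so that summing over matching boundary states glues the two triangle traces back into the whole-surface trace.

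The main obstacle will be matching the quantum corrections on the two sides. The Weyl-ordering in $[Z_{v'}Z_{v''}]$ contributes a power of $\omega$ governed by $Q_{\lambda_e}(v',v'')$, whereas the $\mathbb{S}_e$-side produces powers of $\omega$ from the prescribed height order on $e$ and from the $\omega$-factors appearing in the boundary relations \eqref{wzh.six}--\eqref{wzh.eight}. I would handle this by first checking the single-arc base case---a stated arc crossing $e$ exactly once---where the identity is essentially the gluing axiom of the triangle trace, then bootstrapping to arbitrary stated $n$-webs multiplicatively via the algebra-homomorphism property. Descent to the reduced skein algebra is automatic, since $\mathbb{S}_e$ sends bad arcs to bad arcs and $\mathcal{S}_e$ is injective, so the ideal $I^{\mathrm{bad}}$ is preserved by both compositions.
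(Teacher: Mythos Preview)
The paper does not prove this statement at all: it is quoted verbatim as a result of L\^e--Yu \cite{LY23}, with no argument given. So there is no ``paper's own proof'' to compare against; the theorem functions here purely as an input.

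As for your sketch itself, there is a structural issue. You propose to reduce to the two-triangle case by first cutting along every edge of $\lambda$ other than $e$, invoking ``the desired statement inductively to those simpler cuts.'' But the statement you are invoking for those auxiliary cuts is exactly Theorem~\ref{thm-trace-cut} again, applied to a different edge. Unless you set up a genuine induction (on the number of faces, say) with an independently verified base case, this is circular. In practice the way L\^e--Yu avoid this is that their quantum trace $\overline{\rm tr}_\lambda$ is \emph{constructed} by cutting $\fS$ into triangles and tensoring the triangle traces; compatibility with a single cut along $e$ then follows almost tautologically from associativity of iterated splittings and the fact that $\mathcal{S}_e$ is the corresponding map on quantum tori. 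Your paragraph about matching Weyl-ordering corrections and boundary relations is the right local picture for the triangle gluing, but the global reduction step as written does not stand on its own.
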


\begin{lemma}
    The algebra embedding $\mathcal S_e\colon 
    \mathcal{Z}_\omega(\fS,\lambda)\rightarrow 
    \mathcal{Z}_\omega(\cut_e(\fS),\lambda_e)$ restricts to an algebra embedding from $
    \mathcal{Z}^{\rm mbl}_\omega(\fS,\lambda)$ to 
$\mathcal{Z}^{\rm mbl}_\omega(\cut_e(\fS),\lambda_e)$. 
\end{lemma}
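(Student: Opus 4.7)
The plan is to trace a generator $Z^{\bf t}$ of $\mathcal{Z}^{\rm mbl}_\omega(\fS, \lambda)$ through $\mathcal{S}_e$ and verify that its image again has balanced exponent vector. Since $\mathcal{S}_e$ is already known to be an algebra embedding, only the membership of the image in the mutable-balanced subalgebra needs checking; injectivity of the restriction is then automatic.

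First I would unwind equation \eqref{cutting_homomorphism_for_Z_omega}: for ${\bf t} = (t_v)_{v \in V_\lambda} \in \mathcal{B}_{\mathcal{D}_\lambda}$, the image $\mathcal{S}_e(Z^{\bf t})$ equals $\omega^{N} Z^{\bf s}$ for some integer $N$ that accounts for Weyl re-ordering, with ${\bf s} = (s_w)_{w \in V_{\lambda_e}}$ defined by $s_w = t_{\pr_e(w)}$; in particular $s_{v'} = s_{v''} = t_v$ whenever $\pr_e^{-1}(v) = \{v', v''\}$ for $v$ on $e$. The task reduces to showing ${\bf s} \in \mathcal{B}_{\mathcal{D}_{\lambda_e}}$.

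The key computational input is the behavior of the signed adjacency matrix under cutting. For any $u, w \in V_{\lambda_e}$ with neither lying on $e' \cup e''$, one has $Q_{\lambda_e}(u, w) = Q_\lambda(\pr_e(u), \pr_e(w))$, since arrows of $\Gamma_\lambda$ located away from $e$ are untouched by cutting. For $u$ not on $e' \cup e''$ and $v \in e$ with $\pr_e^{-1}(v) = \{v', v''\}$, any arrow of $\Gamma_\lambda$ between $\pr_e(u)$ and $v$ lies inside one of the two faces of $\lambda$ adjacent to $e$, and cutting attaches it to exactly one of the two copies $v', v''$; this produces
\[
Q_\lambda(\pr_e(u), v) = Q_{\lambda_e}(u, v') + Q_{\lambda_e}(u, v'').
\]
I would verify this identity by a direct inspection of the two faces adjacent to $e$ using the arrow conventions for the $n$-triangulation recalled in \S\ref{subsec.quantum_trace_maps}.

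Now the mutable vertices of $\lambda_e$, being those lying in the interior of $\cut_e(\fS)$, are precisely the mutable vertices of $\lambda$ that do not lie on $e$: each vertex of $\lambda$ on $e$ becomes a pair of vertices of $\lambda_e$ sitting on the new boundary edges $e', e''$, hence frozen in $\lambda_e$. Combining this identification (via $\pr_e$) with $s_{v'} = s_{v''} = t_v$ and the two displayed identities, for any mutable vertex $u$ of $\lambda_e$ we obtain
\[
\sum_{w \in V_{\lambda_e}} Q_{\lambda_e}(u, w)\, s_w \;=\; \sum_{v \in V_\lambda} Q_\lambda(\pr_e(u), v)\, t_v,
\]
and the right-hand side lies in $n\mathbb{Z}$ because $\pr_e(u)$ is also a mutable vertex of $\lambda$ and ${\bf t} \in \mathcal{B}_{\mathcal{D}_\lambda}$. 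Hence ${\bf s} \in \mathcal{B}_{\mathcal{D}_{\lambda_e}}$, completing the argument. The main obstacle I anticipate is the arrow-distribution identity for vertices on $e$; although it is local, it requires careful bookkeeping of the weighted arrows and of the fact that no self-folded triangles occur, so that $v', v''$ do not lie in a common face of $\lambda_e$. Everything else is straightforward, and the proof benefits crucially from the observation that cutting only weakens the mutable-balancing constraint, since the set of mutable vertices shrinks under passing from $\lambda$ to $\lambda_e$.
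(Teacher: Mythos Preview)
Your proof is correct and follows essentially the same logic as the paper's, but the presentations differ slightly. You work directly with exponent vectors and the adjacency-matrix identity $Q_\lambda(\pr_e(u),v)=Q_{\lambda_e}(u,v')+Q_{\lambda_e}(u,v'')$; this identity is exactly equation \eqref{Q_after_cutting}, which the paper establishes and uses later in the proof of Lemma \ref{lem-mutation-cutiing}. The paper's own proof of the present lemma instead uses the equivalent characterization that $Z^{\bf t}$ is mutable-balanced if and only if $Z^{\bf t}X_k=q^{2m}X_kZ^{\bf t}$ for some integer $m$ at each mutable $k$, and then simply applies the homomorphism $\mathcal{S}_e$ to this commutation relation, noting $\mathcal{S}_e(X_{k'})=X_{k'}$ for mutable $k'$. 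This is a bit slicker since it bypasses the explicit arrow-counting you flag as the main obstacle, but your approach has the advantage of making the underlying combinatorics explicit and in fact anticipates computations the paper needs anyway in \eqref{Q_after_cutting}--\eqref{Q-linear_combi_after_cutting}.
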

\begin{proof}
    For any ${\bf t}\in \mathbb Z^{V_{\lambda}}$, we have 
    $
    Z^{\bf t}\in 
    \mathcal{Z}^{\rm mbl}_\omega(\fS,\lambda)$
    if and only if, for each 
    mutable vertex $k\in V_\lambda$ (i.e. $k$ is not contained in the boundary), there exists an integer $m$ such that 
    $Z^{\bf t} X_k = q^{2m} X_k Z^{\bf t}$ (see Definition \ref{def.mbl}).

    Suppose that $k'\in V_{\lambda_e}$ is a 
    mutable vertex, and that ${\bf t}'\in \mathbb Z^{V_{\lambda}}$ 
    satisfies $
    Z^{{\bf t}'}\in 
    \mathcal{Z}^{\rm mbl}_\omega (\fS,\lambda)$. Since $k$ is also a 
    mutable vertex for $V_\lambda$, the above discussion shows that there exists an integer $m'$ such that 
    $Z^{{\bf t}'} X_{k'} = q^{2m'} X_{k'} Z^{{\bf t}'}$.
    Since $k'\in V_{\lambda_e}$ is a 
    mutable vertex, we have 
    $\mathcal S_e(
    X_{k'}) = 
    X_{k'}.$
    This implies that $\mathcal S_e(
    Z^{{\bf t}'})
    X_{k'} = q^{2m'} 
    X_{k'}  
    \mathcal{S}_e(
    Z^{{\bf t}'})\in 
    \mathcal{Z}_\omega(\cut_e(\fS),\lambda_e)$.
    Thus we have $
    \mathcal{S}_e(Z^{{\bf t}'})\in 
    \mathcal{Z}^{\rm mbl}_\omega(\cut_e(\fS),\lambda_e)$.

\end{proof}

Suppose that $\lambda$ and $\lambda'$ are two triangulations  
and that $\lambda'$ is obtained from $\lambda$ by performing a flip at an edge $e_1$. 
Let $e \in \lambda \setminus\{e_1\}$.
Suppose that $\lambda_e$ (resp. $\lambda'_e$) is the triangulation of $\cut_e(\fS)$ induced by $\lambda$ (resp. $\lambda'$). 
Then $\lambda_e'$ is obtained from $\lambda_e$ by performing a flip at $e_1$.  
Let $v_1,\ldots,v_r$ be a sequence of mutable vertices of $\Gamma_\lambda$ such that $\mathcal{D}_{\lambda'} = \mu_{v_r} \cdots \mu_{v_1}(\mathcal{D}_\lambda)$, in the sense of \eqref{two_seeds_connected_by_sequence_of_mutations}.
Since $e\neq e_1$, 
each $v_i = {\bf pr}_e^{-1}(v_i) \in V_{\lambda_e}$ is
contained in the interior of $\cut_e(\fS)$.
We claim that \begin{align}
\label{mutation_sequence_of_cut_surface}
\mathcal{D}_{\lambda'_e} = \mu_{v_r} \cdots \mu_{v_1} (\mathcal{D}_{\lambda_e})
\end{align} 
holds. Consider the subsurface of $\fS$ formed by the two ideal triangles of $\lambda$ having $e_1$ as a side. Say that the three edges of one triangle are $e_1,e_2,e_3$ in the clockwise order, while those of the other are $e_1,e_4,e_5$ in the clockwise order. If $e$ appears at most once in the sequence $e_2,e_3,e_4,e_5$, then \eqref{mutation_sequence_of_cut_surface} is easy to see. In case $e=e_2=e_4$ or $e=e_3=e_5$, then one has to be careful about variables associated to vertices lying in $e$ when checking \eqref{mutation_sequence_of_cut_surface}, but one can still check the equality. The case $e_2=e_3$ or $e_4=e_5$ is excluded, because then $\lambda$ has a self-folded triangle. The case $e_2=e_5$ or $e_3=e_4$ is excluded, because then $\lambda'$ has a self-folded triangle. We thus have \eqref{mutation_sequence_of_cut_surface} indeed.

The following lemma can be viewed as a balanced $n$-th root quantum version of \eqref{mutation_sequence_of_cut_surface}. See \cite[Prop.3.34]{Kim21} for the case when $n=3$, the proof of which we closely follow in our proof below.

\begin{lemma}\label{lem-mutation-cutiing}
    The following diagram commutes
    \begin{equation*}
\begin{tikzcd}
\Fr(
\mathcal{Z}^{\rm mbl}_\omega(\fS,\lambda')) \arrow[r, "\mathcal S_e"]
\arrow[d, "\Theta^{\omega}_{
\lambda \lambda'}"]  
& \Fr(
\mathcal{Z}^{\rm mbl}_\omega(\cut_e(\fS),\lambda_e')) \arrow[d, "\Theta^{\omega}_{
\lambda_e \lambda_e'}"] \\
 \Fr(
 \mathcal{Z}^{\rm mbl}_\omega(\fS,\lambda))
 \arrow[r, "\mathcal S_e"] 
&  \Fr(
\mathcal{Z}^{\rm mbl}_\omega(\cut_e(\fS),\lambda_e)).
\end{tikzcd}
\end{equation*}
\end{lemma}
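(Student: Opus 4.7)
The plan is to establish the commutativity one cluster mutation at a time along the mutation sequence realizing the flip. I would fix a sequence of mutable vertices $v_1,\ldots,v_r$ of $\Gamma_\lambda$ such that $\mathcal{D}_{\lambda'} = \mu_{v_r}\cdots\mu_{v_1}(\mathcal{D}_\lambda)$, and let $\mathcal{D}_0,\mathcal{D}_1,\ldots,\mathcal{D}_r$ denote the intermediate cluster $\mathcal{X}$-seeds on the uncut side, with $\mathcal{D}_0^{(e)},\ldots,\mathcal{D}_r^{(e)}$ the corresponding seeds on the cut side produced by the same sequence of mutations, as guaranteed by \eqref{mutation_sequence_of_cut_surface}. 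Since each $v_i$ belongs to $V_{\lambda;e_1}$ and $e\neq e_1$, each $v_i$ lies in a single component of $\cut_e(\fS)$, so $\pr_e^{-1}(v_i)$ is a single vertex and the variable $X_{v_i}$ is matched with itself under cutting.

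Next, I would extend the splitting map to every intermediate stage by defining $\mathcal{S}_e^{(i)}\colon\mathcal{Z}_\omega(\mathcal{D}_i)\to\mathcal{Z}_\omega(\mathcal{D}_i^{(e)})$ via the same rule as \eqref{cutting_homomorphism_for_Z_omega}, using the projection $\pr_e\colon V_{\lambda_e}\to V_\lambda$. That $\mathcal{S}_e^{(i)}$ is a well-defined algebra embedding rests on the additivity relation
$$
Q_i(u,v) \;=\; \sum_{\tilde u\in \pr_e^{-1}(u)}\sum_{\tilde v\in \pr_e^{-1}(v)} Q_i^{(e)}(\tilde u,\tilde v),
$$
which holds at $i=0$ (this is precisely what makes the original $\mathcal{S}_e$ of \eqref{eq-splitting-torus} an algebra map) and is inherited from $i-1$ to $i$ via the mutation formula \eqref{eq-mutation-Q}, using $v_i\notin e$. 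The restriction of $\mathcal{S}_e^{(i)}$ to mutable-balanced subalgebras is checked by the same type of argument as in the preceding lemma, using that interior vertices of $\cut_e(\fS)$ project to interior vertices of $\fS$.

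The main step will be the commutativity of each elementary square $\mathcal{S}_e^{(i-1)}\circ\nu^\omega_{v_i} = \nu^\omega_{v_i}\circ\mathcal{S}_e^{(i)}$. Using the decomposition $\nu^\omega_{v_i} = \nu^{\sharp\omega}_{v_i}\circ\nu'_{v_i}$, I would handle the two factors separately. For the monomial transformation $\nu'_{v_i}$, the only non-trivial case is a generator $Z^{(i)}_w$ with $w\in e$, for which $\mathcal{S}_e^{(i)}(Z^{(i)}_w) = Z^{(i)}_{w'} Z^{(i)}_{w''}$ (the two factors commute, as $w',w''$ lie in different components of $\cut_e(\fS)$); since exactly one of $w',w''$ lies in the component containing $v_i$, we get $Q_i^{(e)}(w',v_i) + Q_i^{(e)}(w'',v_i) = Q_i(w,v_i)$ with only one summand nonzero, so both ways around the square yield the same element. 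For the automorphism part $\nu^{\sharp\omega}_{v_i} = \mathrm{Ad}_{\Psi^q(X_{v_i})}$, since $\mathcal{S}_e^{(i-1)}(X_{v_i}) = X_{v_i}$ and the action on a Weyl-ordered monomial $Z^{\bf t}$ is determined by the integer $m=\tfrac{1}{n}\sum_v Q_{i-1}(v_i,v)t_v$ (Lemma \ref{lem.nu_sharp_well-defined}), the same additivity relation shows $m$ is preserved by cutting. Composing the commutative squares for $i=1,\ldots,r$ will then yield the statement.

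The main technical obstacle I anticipate is controlling the Weyl-ordering $\omega$-factors accumulated from the brackets $[\,\cdot\,]$ during the monomial-part check. These should be tractable by appealing to Lemma \ref{lem.nu_k_prime_preserves_Weyl-ordering}, together with the crucial fact that the inert doubled variable (the one in the opposite component from $v_i$) commutes with both its partner and with $Z_{v_i}$, causing all Weyl-ordering bookkeeping to collapse. The overall strategy closely parallels \cite[Prop.3.34]{Kim21}, which deals with the $n=3$ case.
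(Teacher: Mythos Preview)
Your overall architecture matches the paper's proof exactly: decompose $\Theta^\omega_{\lambda\lambda'}$ into the elementary mutations $\nu^\omega_{v_i}=\nu^{\sharp\omega}_{v_i}\circ\nu'_{v_i}$, set up the splitting map $\mathcal S_e$ at each intermediate seed, and check compatibility with $\nu'_{v_i}$ and $\nu^{\sharp\omega}_{v_i}$ separately. The additivity relation you state is precisely the paper's \eqref{Q_after_cutting}, and your treatment of the automorphism part $\nu^{\sharp\omega}_{v_i}$ is the same as the paper's.

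There is one genuine gap in your handling of $\nu'_{v_i}$. You assert that for $w\in e$ with $\pr_e^{-1}(w)=\{w',w''\}$, ``exactly one of $w',w''$ lies in the component containing $v_i$'', and hence only one of $Q_{i-1,e}(w',v_i)$, $Q_{i-1,e}(w'',v_i)$ is nonzero. But cutting along $e$ need not disconnect $\fS$; for a non-separating ideal arc, $\cut_e(\fS)$ is connected and $w',w'',v_i$ all lie in the same component. What you actually need is the identity
\[
[Q_{i-1}(w,v_i)]_+ \;=\; [Q_{i-1,e}(w',v_i)]_+ + [Q_{i-1,e}(w'',v_i)]_+,
\]
which, given additivity, holds if and only if the two summands are not of opposite nonzero signs. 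The paper secures this by a different argument: for $n>2$ it observes that both summands being nonzero would force $e$ and $e_1$ to share an interior puncture of valence~$2$, so that flipping at $e_1$ would produce a self-folded triangle in $\lambda'$, which is excluded; for $n=2$ it checks directly that $Q_{i-1,e}(w',v_i)\,Q_{i-1,e}(w'',v_i)\ge 0$. You should replace the ``different components'' reasoning with this topological/combinatorial argument. The Weyl-ordering bookkeeping you flag is indeed handled cleanly by Lemma~\ref{lem.nu_k_prime_preserves_Weyl-ordering}, exactly as you anticipate.
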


\begin{proof}
    Let $\mathcal{D}_0 := \mathcal{D}_\lambda$, and for each $i=1,\ldots,r$ let $\mathcal{D}_i = \mu_{v_i}(\mathcal{D}_{i-1})$ be the $i$-th seed in the sequence of seeds connecting $\mathcal{D}_\lambda$ and $\mathcal{D}_{\lambda'}$. In particular, $\mathcal{D}_r =\mathcal{D}_{\lambda'}$. Likewise for the cut surface $\cut_e(\fS)$, let $\mathcal{D}_{0,e} := \mathcal{D}_{\lambda_e}$, and $\mathcal{D}_{i,e} := \mu_{v_i}(\mathcal{D}_{i-1,e})$ for $i=1,\ldots,r$. For each $1\le i\le r$ we have the algebra embedding
  $$
  \mathcal{S}_e : \mathcal{Z}_\omega(\mathcal{D}_i) \to \mathcal{Z}_\omega(\mathcal{D}_{i,e})
$$
from \eqref{eq-splitting-torus}. We first claim that this restricts to an algebra embedding between the mutable-balanced subalgebras
\begin{align}
\label{mathcal_S_e_for_mbl}
    \mathcal{S}_e : \mathcal{Z}_\omega^{\rm mbl} (\mathcal{D}_i) \to \mathcal{Z}_\omega^{\rm mbl}(\mathcal{D}_{i,e}).
\end{align}
Indeed, let ${\bf t}=(t_v)_{v\in \mathcal{V}} \in \mathcal{B}_{\mathcal{D}_i} \subset \mathbb{Z}^\mathcal{V}$, so that $\sum_{v\in \mathcal{V}} Q_i(u,v) t_v \in n\mathbb{Z}$ for all mutable vertices $u$ of $\mathcal{V}$, i.e. vertices of $V_\lambda$ not lying on the boundary of $\fS$, where $Q_i$ is the exchange matrix of the seed $\mathcal{D}_i$. Then 
\begin{align}
    \label{mathcal_S_e_on_Laurent_monomial}
    \mathcal{S}_e(Z^{\bf t}) = Z^{{\bf t}'}
\end{align}
with ${\bf t}' = (t'_v)_{v\in \mathcal{V}_e} \in \mathbb{Z}^{\mathcal{V}_e}$, where $\mathcal{V}_e = V_{\lambda_e}$; if $v\in \mathcal{V}$ does not lie on $e$ so that ${\bf pr}_e^{-1}(v) = \{v\}$ then $t'_v = t_v$, and if $v\in \mathcal{V}$ lies on $e$ so that ${\bf pr}_e^{-1}(v) = \{v',v''\}$ then $t'_{v'} = t'_{v''} = t_v$. If $u \in \mathcal{V}_e$ is mutable, then $u$ does not lie in the boundary of $\cut_e(\fS)$, hence ${\bf pr}_e^{-1}(u)=\{u\}$ where $u$ is a mutable vertex of $\mathcal{V} = V_\lambda$ not lying on $e$. We would like to show that
$$
\sum_{v \in \mathcal{V}_e} Q_{i,e}(u,v) t'_v \in n\mathbb{Z}
$$
holds, where $Q_{i,e}$ is the exchange matrix of $\mathcal{D}_{i,e}$. For each $v\in \mathcal{V}$, observe that
\begin{align}
    \label{Q_after_cutting}
    Q_{i}(u,v) = \sum_{w \in {\bf pr}_e^{-1}(v)} Q_{i,e}(u,w).
\end{align}
Since ${\bf pr}_e : \mathcal{V}_e \to \mathcal{V}$ is surjective, and since $t'_w = t_{{\bf pr}_e(w)}$ for each $w\in \mathcal{V}_e$, it follows that 
\begin{align}
\label{Q-linear_combi_after_cutting}
\sum_{v\in \mathcal{V}} Q_i(u,v)t_v = \sum_{w\in \mathcal{V}_e} Q_{i,e}(u,w) t_w'.    
\end{align}
The left hand side belongs to $n\mathbb{Z}$, it follows that so does the right hand side, as desired.

We still denote by $\mathcal{S}_e$ the map between the skew-fields of fractions ${\rm Frac}(\mathcal{Z}^{\rm mbl}_\omega(\mathcal{D}_i)) \to {\rm Frac}(\mathcal{Z}^{\rm mbl}_\omega(\mathcal{D}_{i,e}))$ induced by $\mathcal{S}_e : \mathcal{Z}^{\rm mbl}_\omega(\mathcal{D}_i) \to \mathcal{Z}^{\rm mbl}_\omega(\mathcal{D}_{i,e})$. We now claim that the following diagram commutes:
\begin{align}\label{eq-com-S-vi}
\xymatrix{
{\rm Frac}(\mathcal{Z}^{\rm mbl}_\omega(\mathcal{D}_{i})) \ar[r]^{\mathcal{S}_e} \ar[d]_{\nu^\omega_{v_i}} & {\rm Frac}(\mathcal{Z}^{\rm mbl}_\omega(\mathcal{D}_{i,e})) \ar[d]^{\nu^\omega_{v_i}} \\
{\rm Frac}(\mathcal{Z}^{\rm mbl}_\omega(\mathcal{D}_{i-1})) \ar[r]^{\mathcal{S}_e} & {\rm Frac}(\mathcal{Z}^{\rm mbl}_\omega(\mathcal{D}_{i-1,e})).
}
\end{align}

To show that the diagram in \eqref{eq-com-S-vi} commutes, it suffices to show that the following two diagrams commute:
\begin{align}\label{eq-com-S-vi1}
\xymatrix{
{\rm Frac}(\mathcal{Z}_\omega(\mathcal{D}_{i})) \ar[r]^{\mathcal{S}_e} \ar[d]_{\nu_{v_i}'} & {\rm Frac}(\mathcal{Z}_\omega(\mathcal{D}_{i,e})) \ar[d]_{\nu_{v_i}'} \\
{\rm Frac}(\mathcal{Z}_\omega(\mathcal{D}_{i-1})) \ar[r]^{\mathcal{S}_e} & {\rm Frac}(\mathcal{Z}_\omega(\mathcal{D}_{i-1,e}))
}
\end{align}
and 
\begin{align}\label{eq-com-S-vi2}
\xymatrix{
{\rm Frac}(\mathcal{Z}^{\rm mbl}_\omega(\mathcal{D}_{i-1})) \ar[r]^{\mathcal{S}_e} \ar[d]_{\nu^{\sharp\omega}_{v_i}} & {\rm Frac}(\mathcal{Z}^{\rm mbl}_\omega(\mathcal{D}_{i-1,e})) \ar[d]^{\nu^{\sharp\omega}_{v_i}} \\
{\rm Frac}(\mathcal{Z}^{\rm mbl}_\omega(\mathcal{D}_{i-1})) \ar[r]^{\mathcal{S}_e} & {\rm Frac}(\mathcal{Z}^{\rm mbl}_\omega(\mathcal{D}_{i-1,e})).
}
\end{align}
Note that the first diagram restricts to the maps between the mutable-balanced subalgebras (or the skew-fields of fractions thereof), since we showed that both $\nu'_{v_i}$ and $\mathcal{S}_e$   restrict to the mutable-balanced subalgebras.

Let us temporarily denote the generators of $\mathcal{Z}_\omega(\mathcal{D}_{i})$ or $\mathcal{Z}_\omega(\mathcal{D}_{i,e})$ by $Z_v'$, and those of $\mathcal{Z}_\omega(\mathcal{D}_{i-1})$ or $\mathcal{Z}_\omega(\mathcal{D}_{i-1,e})$ by $Z_v$.
For any $v\in\mathcal V$ such that $v$ is not contained in $e$, we have ${\bf pr}_e^{-1}(v)=\{v\}$ (also ${\bf pr}_e^{-1}(v_i)=\{v_i\}$), so $Q_i(v,v_i)=Q_{i,e}(v,v_i)$ 
 (cf.\eqref{Q_after_cutting}), $\mathcal{S}_e(Z_v') = Z_v'$ and $\mathcal{S}_e(Z_{v_i}')=Z_{v_i}'$ (\eqref{cutting_homomorphism_for_Z_omega}). In view of \eqref{eq-quantum-mutation_Z}, we can observe
\begin{align*}
    \mathcal S_e(\nu_{v_i}'(Z_v'))  = \nu_{v_i}'(\mathcal S_e(Z_v')).
\end{align*}
Now let $v\in \mathcal{V}$ be a vertex that lies in $e$; then
$(\textbf{pr}_e)^{-1}(v) = \{v',v''\}$, for some distinct $v',v'' \in \mathcal{V}_e$. From \eqref{cutting_homomorphism_for_Z_omega} and \eqref{eq-quantum-mutation_Z}, together with Lemma \ref{lem.nu_k_prime_preserves_Weyl-ordering}, we
have 
\begin{align*}
    \nu_{v_i}'(\mathcal S_e(Z_v'))
    =&\nu_{v_i}'([Z_{v'}' Z_{v''}'])
    =[Z_{v'} Z_{v_i}^{[Q_{i-1,e}(v',v_i)]_{+}} Z_{v''} Z_{v_i}^{[Q_{i-1,e}(v'',v_i)]_{+}}],
     \\
    \mathcal S_e(\nu_{v_i}'(Z_v'))
    =&\mathcal S_e([Z_{v} Z_{v_i}^{[Q_{i-1}(v,v_i)]_{+}}])
    =[Z_{v'} Z_{v''} Z_{v_i}^{[Q_{i-1}(v,v_i)]_{+}}].
\end{align*} 
From \eqref{Q_after_cutting} one has
\begin{align*}
    Q_{i-1}(v,v_i)
    =Q_{i-1,e}(v',v_i) + Q_{i-1,e}(v'',v_i).
\end{align*}
When $n>2$, we claim that at least one of $Q_{i-1,e}(v',v_i) $ and $ Q_{i-1,e}(v'',v_i)$ is zero. As observed in the proof of \cite[Prop.3.34]{Kim21}, the only possible case when both are nonzero is when $e$ and 
$e_1$ shares a puncture in the interior of the surface, such that the valence of $\lambda$ at this puncture is $2$. However, then flipping at $e$ would yield a self-folded triangle (in $\lambda'$), so this case is excluded. 
When $n=2$, it is easy to check that $Q_{i-1,e}(v',v_i)Q_{i-1,e}(v'',v_i)\geq 0$. 
This shows 
$[Q_{i-1}(v,v_i)]_{+}
    =[Q_{i-1,e}(v',v_i)]_{+} + [Q_{i-1,e}(v'',v_i)]_{+}$
and therefore
$\nu_{v_i}'(\mathcal S_e(Z_v'))= \mathcal S_e(\nu_{v_i}'(Z_v')).$
Thus the diagram in \eqref{eq-com-S-vi1} commutes.

For ${\bf t} = (t_v)_{v\in \mathcal{V}} \in \mathcal{B}_{\mathcal{D}_{i-1}}$,
suppose that $\mathcal S_e(Z^{\bf t}) = Z^{{\bf t}'}$, where ${\bf t}'=(t_v')_{v\in \mathcal V_e}\in \mathcal B_{\mathcal D_{i-1,e}}$.
Lemma \ref{lem.nu_sharp_well-defined} implies that
\begin{align*}
    \nu^{\sharp \omega}_{v_i}(\mathcal S_e(Z^{\bf t})) &= Z^{{\bf t}'} \, F^q(X_{v_i}, 
    \textstyle{ \frac{1}{n} } {\textstyle \sum}_{v\in \mathcal{V}_e} Q_{i-1,e}(v_i,v) t_v'),\\
    \mathcal S_e(\nu^{\sharp \omega}_{v_i}(Z^{\bf t}))&=\mathcal S_e(Z^{\bf t} \, F^q(X_{v_i}, {\textstyle \frac{1}{n}} {\textstyle \sum}_{v\in \mathcal{V}} Q_{i-1}(v_i,v) t_v))
    =Z^{{\bf t}'}\, F^q(X_{v_i}, {\textstyle \frac{1}{n}} {\textstyle \sum}_{v\in \mathcal{V}} Q_{i-1}(v_i,v) t_v).
\end{align*}
From \eqref{Q-linear_combi_after_cutting} we have
$$\sum_{v\in \mathcal{V}_e} Q_{i-1,e}(v_i,v) t_v'=
\sum_{v\in \mathcal{V}} Q_{i-1}(v_i,v) t_v.
$$
Thus we have $\nu^{\sharp \omega}_{v_i}(\mathcal S_e(Z^{\bf t})) = \mathcal S_e(\nu^{\sharp \omega}_{v_i}(Z^{\bf t}))$, and hence 
the diagram in \eqref{eq-com-S-vi2} commutes, finishing the proof. 
\end{proof}

\def\wl{\widetilde\lambda}

\section{Compatibility between $\SL$-quantum trace maps and 
coordinate change isomorphisms}\label{sec:compatibility}

\subsection{Compatibility of the quantum trace map with the balanced quantum coordinate change maps $\Theta^\omega_{\lambda\lambda'}$}\label{subsec.compatibility_Theta}

Let $\fS$ be a triangulable pb surface. For each triangulation $\lambda$ of $\fS$, we have a quantum trace homomorphism
$$
\tr : \rdS \to \mathcal{Z}_\omega^{\rm bl}(\fS,\lambda) \subset \mathcal{Z}_\omega^{\rm mbl} (\fS,\lambda) \subset \mathcal{Z}_\omega(\fS,\lambda),
$$
from Theorem \ref{thm.quantum_trace}; see Lemma \ref{lem:bl_in_mbl} for $\mathcal{Z}_\omega^{\rm bl}(\fS,\lambda) \subset \mathcal{Z}_\omega^{\rm mbl} (\fS,\lambda)$. Meanwhile, for any pair of triangulations $\lambda$ and $\lambda'$, we developed the balanced $n$-th root quantum coordinate change isomorphism
$$
\Theta^\omega_{\lambda\lambda'} : {\rm Frac}(\mathcal{Z}^{\rm mbl}_\omega(\fS,\lambda')) \to {\rm Frac}(\mathcal{Z}^{\rm mbl}_\omega(\fS,\lambda))
$$
in the previous section, which extends the previously known quantum coordinate change isomorphism $\Phi^q_{\lambda\lambda'} : {\rm Frac}(\mathcal{X}_q(\fS,\lambda')) \to {\rm Frac}(\mathcal{X}_q(\fS,\lambda))$ coming from the theory of quantum cluster algebras. 

In the present subsection we show that the quantum trace maps for different triangulations are compatible with each other via the isomorphisms $\Theta^\omega_{\lambda\lambda'}$. This compatibility, formulated as the following theorem, constitutes the core part of the main result of the present paper.
\begin{theorem}[compatibility of quantum trace with mutable-balanced $n$-root quantum coordinate change maps $\Theta^\omega_{\lambda\lambda'}$]\label{thm-main-compatibility}
    Let $\fS$ be a triangulable pb surface. For any two triangulations $\lambda$ and $\lambda'$ of $\fS$, the following diagram commutes:
    \begin{align}
        \label{eq-compability-tr-mutation}
        \raisebox{7mm}{\xymatrix{
        & \rdS \ar[dl]_{\overline{\rm tr}_{\lambda'}} \ar[dr]^{\tr} & \\
        {\rm Frac}(\mathcal{Z}^{\rm mbl}_\omega(\fS,\lambda')) \ar[rr]_-{\Theta^\omega_{\lambda\lambda'}} & & {\rm Frac}(\mathcal{Z}^{\rm mbl}_\omega(\fS,\lambda)).}}
    \end{align}
    That is, we have
    \begin{align}
    \label{compatibility_equation}
        \tr = \Theta^\omega_{\lambda\lambda'} \circ \overline{\rm tr}_{\lambda'}.
    \end{align}
\end{theorem}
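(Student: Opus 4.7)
The plan is to prove \eqref{compatibility_equation} by reducing in two stages to the case $\fS = \mathbb{P}_4$, and then to handle the 4-gon case via the Schrader--Shapiro network method outlined in \S\ref{subsec.main_ideas}.

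\emph{Reduction to a single flip.} Any two triangulations of $\fS$ are connected by a triangulation sweep \cite{Lab09}, and by construction $\Theta^\omega_{\lambda\lambda'}$ factors as the composition of the single-flip quantum coordinate change maps along the sweep. If the compatibility \eqref{compatibility_equation} holds across each single flip, then composing the resulting equations and invoking Proposition \ref{prop:Theta_omega_consistency} yields the statement in general. So I may assume that $\lambda'$ is obtained from $\lambda$ by one flip at an interior edge $e$ of $\fS$.

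\emph{Reduction to $\mathbb{P}_4$ by cutting.} Let $e' \in \lambda \setminus \{e\}$ be an edge lying in the interior of $\fS$; then $e'$ is also an edge of $\lambda'$, and the induced triangulations $\lambda_{e'}$ and $\lambda'_{e'}$ on $\cut_{e'}(\fS)$ still differ by a single flip at $e$. Combining Theorem \ref{thm-trace-cut}, applied to both $\lambda$ and $\lambda'$, with Lemma \ref{lem-mutation-cutiing} gives
\[
\mathcal{S}_{e'} \circ \bigl( \tr - \Theta^\omega_{\lambda\lambda'} \circ \overline{\rm tr}_{\lambda'} \bigr) \;=\; \bigl( \overline{\rm tr}_{\lambda_{e'}} - \Theta^\omega_{\lambda_{e'}\lambda'_{e'}} \circ \overline{\rm tr}_{\lambda'_{e'}} \bigr) \circ \mathbb{S}_{e'}.
\]
Since $\mathcal{S}_{e'}$ is an algebra embedding of domains, it extends to an injection of skew-fields of fractions, so the compatibility on $\cut_{e'}(\fS)$ forces the compatibility on $\fS$. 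Iterating this cut along all non-boundary edges of $\lambda$ other than $e$, the problem reduces to a disjoint union consisting of the quadrilateral containing $e$ (on which the two triangulations of $\mathbb{P}_4$ appear) together with several ideal triangles on which both sides carry identical triangulations, where the compatibility is trivial. What remains is the case $\fS = \mathbb{P}_4$ with its two triangulations.

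\emph{The $\mathbb{P}_4$ case and the main obstacle.} Both sides of \eqref{compatibility_equation} are algebra homomorphisms, so it suffices to verify the equation on a generating set; following L\^e--Yu, I would work with stated arcs. The key input is \cite[Thm.~10.5]{LY23}, which expresses $\tr$ of a stated arc in $\mathbb{P}_3$ as a weighted sum of Laurent monomials indexed by directed paths in the Schrader--Shapiro network dual to the $n$-triangulation of $\mathbb{P}_3$. I would first extend this formula from $\mathbb{P}_3$ to $\mathbb{P}_4$ by gluing two copies of the path-sum along the middle edge (this is the content of the anticipated Lemma \ref{lem.quantum_trace_as_sum_over_paths}), obtaining path-sum formulas for both $\tr$ and $\overline{\rm tr}_{\lambda'}$ on stated arcs of $\mathbb{P}_4$. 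The flip $\lambda \leadsto \lambda'$ is realized by the Fock--Goncharov--Shen sequence of $\tfrac{1}{6}(n^3 - n)$ cluster mutations, each of which corresponds to a local Schrader--Shapiro mutation of the network under which the path-sum transforms by a transfer-matrix identity \cite{SS17}; applying these identities step by step along the entire mutation sequence, $\Theta^\omega_{\lambda\lambda'}$ would carry the $\lambda'$ path-sum term-by-term into the $\lambda$ path-sum, yielding \eqref{compatibility_equation} on $\mathbb{P}_4$. I expect this third step to be the main obstacle: extending L\^e--Yu's path-sum across the middle edge requires careful bookkeeping of the balancedness condition, Weyl orderings, and state values at the glued endpoints, and the inductive application of the Schrader--Shapiro identity through all $\tfrac{1}{6}(n^3-n)$ mutations demands that every sign, scalar, and $n$-root normalization constant align precisely on the two sides. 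Once this is carried out, the reduction steps above promote the $\mathbb{P}_4$ result to the full Theorem \ref{thm-main-compatibility}.
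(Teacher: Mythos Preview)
Your proposal is correct and follows essentially the same approach as the paper: reduce to a single flip via consistency, then to $\mathbb{P}_4$ via the splitting homomorphisms (Theorem \ref{thm-trace-cut} and Lemma \ref{lem-mutation-cutiing}), and settle the 4-gon case by combining the L\^e--Yu path-sum formula (extended to $\mathbb{P}_4$, Lemma \ref{lem.quantum_trace_as_sum_over_paths}) with the Schrader--Shapiro mutation identity. The one refinement the paper makes explicit, which you only gesture at under ``$n$-root normalization constants,'' is that the path-sum factors as $[X^{{\bf k}(p,\lambda)}Z^{\bf k}]$ with the $X$-part handled by $\Phi^q_{\lambda\lambda'}$ (Lemma \ref{lem.SS_core}) and the balanced monomial $Z^{\bf k}$ handled separately by a direct argument (Lemma \ref{lem.Theta_on_normalizer_monomial}); the residual power of $\omega$ is then killed by $*$-invariance (Lemma \ref{lem:nu_omega_preserves_star}) rather than by tracking constants through all mutations.
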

Here is how we break down the proof. First, from Proposition \ref{prop:Theta_omega_consistency} we immediately obtain the consistency among the quantum coordinate change isomorphisms; that is, for any triangulations $\lambda$, $\lambda'$ and $\lambda''$, we have
\begin{align}
\label{Theta_omega_lambda_lambda_prime_consistency}
\Theta^\omega_{\lambda\lambda'} \circ \Theta^\omega_{\lambda'\lambda''} = \Theta^\omega_{\lambda\lambda''}.
\end{align}
Now, since any two triangulations are connected by a sequence of flips, it suffices to prove Theorem \ref{thm-main-compatibility} for the case when $\lambda$ and $\lambda'$ are related by a single flip at an edge, say $e_1$. 

There are two triangles of $\lambda$ having $e_1$ as a side. Let one of them have sides $e_1,e_2,e_3$ in the clockwise order, and the other have sides $e_1,e_4,e_5$ in the clockwise order. Cutting the surface $\fS$ along all the edges in the set $\{e_2,e_3,e_4,e_5\}$ (whose cardinality may be less than 4) that do not lie in the boundary of $\fS$ yields a triangulable pb surface, the connected component of which that contains (the image of) $e_1$ is isomorphic to $\mathbb{P}_4$, the ideal quadrilateral, the 4-gon, i.e. the closed disc minus four punctures at the boundary. By a repeated application of Lemma \ref{lem-mutation-cutiing} about the compatibility between the coordinate change maps $\Theta^\omega_{\lambda\lambda'}$ and the splitting homomorphisms of skein algebras, the problem boils down to proving Theorem \ref{thm-main-compatibility} in the case when the surface $\fS$ is the 4-gon $\mathbb{P}_4$ (we will justify this `boiling down' process). So we shall first show the following:
\begin{proposition}\label{prop-P4-compatibility_new}
    Theorem \ref{thm-main-compatibility} holds when  
 $\fS$ is the 4-gon $\mathbb P_4$, where $\lambda,\lambda'$ are the triangulations illustrated in Figure \ref{P4_two_triangulations}.
\end{proposition}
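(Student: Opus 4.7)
The plan is to verify the compatibility equation $\overline{\rm tr}_\lambda = \Theta^\omega_{\lambda\lambda'} \circ \overline{\rm tr}_{\lambda'}$ on a generating set of $\rdP$. Since both sides are algebra homomorphisms into a domain, the locus where they agree forms a subalgebra, so it suffices to test the equation on stated arcs with endpoints on the boundary (the closed-loop generators, being handled at the single-triangle level, pose no new issue on the 4-gon). Concretely, one picks a stated arc $\alpha$ connecting two boundary components of $\mathbb P_4$ and shows $\overline{\rm tr}_\lambda(\alpha) = \Theta^\omega_{\lambda\lambda'}(\overline{\rm tr}_{\lambda'}(\alpha))$.

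For such an $\alpha$, I would invoke Lemma \ref{lem.quantum_trace_as_sum_over_paths}, the 4-gon extension of L\^e--Yu's theorem, to write $\overline{\rm tr}_\mu(\alpha)$ for $\mu\in\{\lambda,\lambda'\}$ as an explicit sum of Weyl-ordered Laurent monomials in $Z_v^{\pm 1}$, indexed by directed paths in the Schrader--Shapiro network dual to the $n$-triangulation of $\mu$. The claim then becomes the assertion that the composition
$\nu^\omega_{v_1}\circ\cdots\circ\nu^\omega_{v_r}$, with $r=\tfrac{1}{6}(n^3-n)$ and $(v_i)$ the Fock--Goncharov--Shen mutation sequence defining $\Theta^\omega_{\lambda\lambda'}$, transforms the path-sum for the $\lambda'$-network into the path-sum for the $\lambda$-network.

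The crucial input is the Schrader--Shapiro compatibility: a single elementary local move of the dual network, corresponding to a single cluster mutation at a vertex, intertwines the path-sums for the two adjacent networks. Applying this compatibility inductively along our mutation sequence, each intermediate $\nu^\omega_{v_i}$ carries the current path-sum to the path-sum of the network modified at $v_i$. After all $r$ elementary mutations, the $\lambda'$-network has been transformed into the $\lambda$-network, and the two path-sums match term by term, establishing \eqref{compatibility_equation} on $\alpha$.

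The main obstacle is lifting Schrader and Shapiro's elementary-mutation identity, which is naturally formulated in the standard Fock--Goncharov algebra $\mathcal{X}_q$, to our mutable-balanced $n$-th root algebra $\mathcal{Z}^{\rm mbl}_\omega$. This requires verifying that the individual Laurent monomial summands of the path-sum lie in $\mathcal{Z}^{\rm mbl}_\omega$ so that $\nu^{\sharp\omega}_{v_i}$ may be applied to them one at a time, tracking Weyl-ordering through each monomial mutation step via Lemma \ref{lem.nu_k_prime_preserves_Weyl-ordering}, and matching the $q$-phases and signs (arising from the braid factors in \eqref{wzh.four} and the boundary relation \eqref{wzh.eight}) with those produced by $F^q(X_{v_i},m)$ in each $\nu^{\sharp\omega}_{v_i}$. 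A secondary subtlety is the careful identification of the Schrader--Shapiro local network move with the cluster mutation at $v_i$ in $\Gamma_\lambda$, including the combinatorics of paths being split, merged, or rerouted through the mutated vertex.
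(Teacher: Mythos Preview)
Your outline has the right skeleton---reduce to stated corner arcs, invoke Lemma~\ref{lem.quantum_trace_as_sum_over_paths} for the path-sum description, and appeal to Schrader--Shapiro---but you have misidentified where the real work lies, and as written the argument is incomplete.

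The obstacle you name, lifting the Schrader--Shapiro local-mutation identity from $\mathcal{X}_q$ to $\mathcal{Z}^{\rm mbl}_\omega$, is precisely what the paper \emph{avoids} rather than confronts. The key observation (built into the statement of Lemma~\ref{lem.quantum_trace_as_sum_over_paths}) is that each path monomial factors as $[X^{{\bf k}(p,\lambda)} Z^{\bf k}]$, where $X^{{\bf k}(p,\lambda)}$ lies in the ordinary Fock--Goncharov algebra $\mathcal{X}_q$ and the normalizing monomial $Z^{\bf k}$ is \emph{the same for every path} $p$. Hence $\overline{\rm tr}_\lambda(a_{ij})$ is, up to a single power of $\omega$, the product of the $X$-level path sum $\sum_p X^{{\bf k}(p,\lambda)}$ with the fixed element $Z^{\bf k}$. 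Now $\Theta^\omega_{\lambda\lambda'}$ restricted to $\mathcal{X}_q$ is just $\Phi^q_{\lambda\lambda'}$ (Remark~\ref{rem-extension}), so Schrader--Shapiro applies \emph{verbatim} to the $X$-factor (Lemma~\ref{lem.SS_core}) with no lifting needed. The $Z^{\bf k}$-factor is handled separately (Lemma~\ref{lem.Theta_on_normalizer_monomial}): one checks that $Z^{{\bf k}'}$ commutes with every $X_u$ for mutable $u$, so each $\nu^{\sharp\omega}_{v_i}$ acts trivially on it and $\Theta^\omega_{\lambda\lambda'}(Z^{{\bf k}'}) = Z^{\bf k}$ follows from the monomial part $\nu'_{v_i}$ alone. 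Combining the two factors gives $\Theta^\omega_{\lambda\lambda'}(\overline{\rm tr}_{\lambda'}(a_{ij})) = \omega^{?}\,\overline{\rm tr}_\lambda(a_{ij})$, and the residual power of $\omega$ is killed by observing that both sides are $*$-invariant (Lemma~\ref{lem:nu_omega_preserves_star}).

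Your proposal to track each $\nu^\omega_{v_i}$ through individual $Z$-monomials would instead require proving an $n$-th-root analogue of the Schrader--Shapiro local move, including matching all fractional $\omega$-phases step by step; you flag this as an obstacle but offer no mechanism to overcome it. Two minor points: on $\mathbb{P}_4$ there are no nontrivial closed loops (the surface is a disk), so your parenthetical about loop generators is vacuous; and the relevant generating set is the specific corner arcs $a_{ij}, b_{ij}, c_{ij}$ of Lemma~\ref{lem-satuared-rd}, not generic arcs between boundary components.
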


\begin{figure}[h]
    \centering
\begingroup%
  \makeatletter%
  \providecommand\color[2][]{%
    \errmessage{(Inkscape) Color is used for the text in Inkscape, but the package 'color.sty' is not loaded}%
    \renewcommand\color[2][]{}%
  }%
  \providecommand\transparent[1]{%
    \errmessage{(Inkscape) Transparency is used (non-zero) for the text in Inkscape, but the package 'transparent.sty' is not loaded}%
    \renewcommand\transparent[1]{}%
  }%
  \providecommand\rotatebox[2]{#2}%
  \newcommand*\fsize{\dimexpr\f@size pt\relax}%
  \newcommand*\lineheight[1]{\fontsize{\fsize}{#1\fsize}\selectfont}%
  \ifx\svgwidth\undefined%
    \setlength{\unitlength}{198.42519685bp}%
    \ifx\svgscale\undefined%
      \relax%
    \else%
      \setlength{\unitlength}{\unitlength * \real{\svgscale}}%
    \fi%
  \else%
    \setlength{\unitlength}{\svgwidth}%
  \fi%
  \global\let\svgwidth\undefined%
  \global\let\svgscale\undefined%
  \makeatother%
  \begin{picture}(1,0.47142857)%
    \lineheight{1}%
    \setlength\tabcolsep{0pt}%
    \put(0,0){\includegraphics[width=\unitlength,page=1]{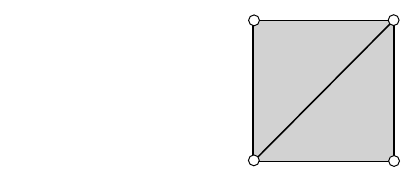}}%
    \put(0.19359755,0.02315805){\color[rgb]{0,0,0}\makebox(0,0)[lt]{\lineheight{1.25}\smash{\begin{tabular}[t]{l}$\lambda$\end{tabular}}}}%
    \put(0,0){\includegraphics[width=\unitlength,page=2]{P4_two_triangulations.pdf}}%
    \put(0.74544285,0.02315805){\color[rgb]{0,0,0}\makebox(0,0)[lt]{\lineheight{1.25}\smash{\begin{tabular}[t]{l}$\lambda'$\end{tabular}}}}%
  \end{picture}%
\endgroup%

    \caption{Two triangulations $\lambda$ and $\lambda'$ of $\mathbb{P}_4$}\label{P4_two_triangulations}
\end{figure}

In fact, the core computational part of the proof of the entire Theorem \ref{thm-main-compatibility}, hence the main difficulty thereof, lies in this case of $\mathbb{P}_4$, i.e. in the proof of Proposition \ref{prop-P4-compatibility_new}, which we postpone until the next subsection. At the moment, we explain how Proposition \ref{prop-P4-compatibility_new} implies Theorem \ref{thm-main-compatibility}, i.e. justify the `boiling down' process.

\begin{proof}[Proof of Theorem \ref{thm-main-compatibility}, using Proposition \ref{prop-P4-compatibility_new}]
    Let $\fS$ be a triangulable pb surface, and $\lambda$ and $\lambda'$ be triangulations of $\fS$. As mentioned above, it suffices to show the statement, \eqref{compatibility_equation}, in the case when $\lambda$ and $\lambda'$ are related by a flip at an edge, say $e_1$.

    Define 
    \begin{align}
        \label{lambda_0}
        E =\{e\in\lambda\mid e\neq e_1\text{ and $e$ is not a boundary edge}\}.
    \end{align}
    We use $
    \fS_E:=\mathsf{Cut}_E(\fS)$ to denote the pb surface obtained from $\fS$ by cutting $\fS$ along all edges in $
    E$. Then the connected components of $
    \fS_E$ consist of exactly one $\mathbb P_4$ and some number of $\mathbb P_3$. We use $\lambda_E$ (resp. $
    \lambda'_E$) to denote the triangulation of $
    \fS_E$ induced by $\lambda$
    (resp. $\lambda'$). 

    Then
    $\lambda_E'$ is obtained from $
    \lambda_E$ by a flip at the edge $e_1$. 
Consider the following diagram
\begin{equation}\label{eq-cutting-maps}
\begin{tikzcd}[column sep=small]
\Fr(\mathcal Z_\omega^{\rm mbl}(\fS,\lambda')) \arrow[rr,"\Theta^{\omega}_{\lambda \lambda'}"] 
\arrow[ddd,hook,"\mathcal{S}_{
E}"] & & \Fr(\mathcal Z_\omega^{\rm mbl}(\fS,\lambda)) 
\arrow[ddd,hook,"\mathcal{S}_{
E}"] \\
& \rdS  \arrow[d,"\mathbb{S}_{
E}"] \arrow[lu, "\overline{\rm tr}_{\lambda'}"] \arrow[ru,"
\overline{\rm tr}_{\lambda}"]& \\
& \overline{\cS}_{\omega}(
\fS_E)  \arrow[rd,"
\overline{\rm tr}_{
\lambda_E}"]\arrow[ld,"
\overline{\rm tr}_{
\lambda'_E}"] & \\
 \Fr(\mathcal Z_\omega^{\rm mbl}(
 \fS_E,
 \lambda'_E)) \arrow[rr,"\Theta^{\omega}_{
 \lambda_E \lambda'_E}"] 
 & &  \Fr(\mathcal Z_\omega^{\rm mbl}(
 \fS_E,
 \lambda_E)),
\end{tikzcd}
\end{equation}
where all the vertical arrows are the algebra homomorphisms induced by the 
splitting maps (for cutting along 
all the edges in $
E$), as in Theorem \ref{thm-trace-cut}; note that one can cut the edges in any order, as the splitting maps for different edges of a triangulation commute with each other.  
Observe that the two long vertical arrows in the above diagram are injective. 
Theorem \ref{thm-trace-cut} implies that each of the left and the right trapezoids of this diagram commutes. 
Proposition \ref{prop-P4-compatibility_new} implies that the lower triangle in this diagram commutes. 
Lemma \ref{lem-mutation-cutiing} implies that the outermost square of this diagram commutes. 

One can now show that the upper triangle also commutes. Let us elaborate. Pick any $u \in \overline{\cS}_\omega(\fS)$. Then
\begin{align*}
    \mathcal{S}_{
    E}(\Theta^\omega_{\lambda\lambda'}(
    \overline{\rm tr}_{\lambda'}(u)))
    & = \Theta^\omega_{\bar{\lambda} \bar{\lambda}'}(\mathcal{S}_{
    E}(
    \overline{\rm tr}_{\lambda'}(u))) \quad (\because\mbox{outermost square}) \\
    & = \Theta^\omega_{\bar{\lambda}\bar{\lambda}'}(
    \overline{\rm tr}_{
    \lambda'_E}(\mathbb{S}_{
    E}(u))) \quad (\because\mbox{left trapezoid}) \\
    & = 
    \overline{\rm tr}_{
    \lambda_E}(\mathbb{S}_{
    E}(u)) \quad (\because \mbox{lower triangle}) \\
    & = \mathcal{S}_{
    E}(
    \overline{\rm tr}_{
    \lambda}(u)) \quad (\because \mbox{right trapezoid}).
\end{align*}
Since $\mathcal{S}_{
E}$ (for the right vertical arrow) is injective, we get $\Theta^\omega_{\lambda\lambda'}(
\overline{\rm tr}_
{\lambda'}(u)) = 
\overline{\rm tr}_{
\lambda}(u)$, as desired.
\end{proof}

\subsection{Proof of compatibility for the case of quadrilateral $\mathbb{P}_4$}

In the present subsection we prove Proposition \ref{prop-P4-compatibility_new}. We shall verify that \eqref{compatibility_equation} holds when applied to generating elements of the reduced stated ${\rm SL}_n$-skein algebra $\rdP$. Consider the three corner arcs $a,b,c$ in $\mathbb P_4$ as in Figure \ref{P4_corner_arcs}. 
For $i,j\in\mathbb \{1,2,\ldots,n\}$, we use 
$a_{ij}$ to denote the stated web diagram such that the state of the starting point (resp. endpoint) of $a$ is $i$ (resp. $j$). Similarly, we define $b_{ij}$ and $c_{ij}$. 
We have the following.

\begin{lemma}\cite[Theorem 6.1(c)]{LY23}\label{lem-satuared}
    The stated ${\rm SL}_n$-skein algebra $\mathscr{S}_\omega(\mathbb{P}_4)$ is generated by
    $a_{ij},b_{ij},c_{ij}$ for $i,j\in\mathbb \{1,\ldots,n\}$.
\end{lemma}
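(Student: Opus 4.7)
The plan is to reduce any stated $n$-web in $\mathbb{P}_4 \times (-1,1)$ to an $R$-linear combination of ordered products of $a_{ij}, b_{ij}, c_{ij}$, using the skein relations \eqref{w.cross}--\eqref{wzh.four} together with the boundary relations \eqref{wzh.five}--\eqref{wzh.eight}. The argument naturally splits into three stages, where the first two are topological/combinatorial reductions and the third is the only step with a genuine computational content.

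First I would put any stated $n$-web diagram $\alpha$ into a normal form consisting of disjoint embedded stated arcs with no interior $n$-valent vertices and no crossings. Since $\mathbb{P}_4$ is simply connected, every closed loop component bounds a disk, so \eqref{w.twist} and \eqref{w.unknot} reduce it to a scalar; more generally, any interior $n$-valent vertex sits inside a small disk together with its $n$ incident half-edges and can be opened using \eqref{wzh.four}, after which the resulting braid is resolved by repeated application of the crossing relation \eqref{w.cross}. Inductively on (crossings $+$ interior vertices) this leaves only disjoint unions of embedded stated arcs with endpoints on $\partial \mathbb{P}_4$.

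Second I would classify the isotopy classes of embedded arcs: there are four corner arcs (one per pair of adjacent boundary edges), two arcs connecting opposite edges, and families of arcs whose two endpoints lie on a common boundary edge. Arcs of the last type can be pushed so that their two endpoints are adjacent in height; the height-exchange relation \eqref{wzh.eight} combined with the corner relations \eqref{wzh.six}, \eqref{wzh.seven} then reduces them to scalars. For a diagonal arc connecting opposite edges, I would isotope it close to a corner at which one of $a, b, c$ sits and apply the sum decomposition \eqref{wzh.five} at a nearby puncture, which expresses the diagonal as an $R$-linear combination of products of two stated corner arcs. Hence we are reduced to the four families of corner arcs.

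The final and main step is to express the fourth corner arc $d_{ij}$ (the one not among $a, b, c$) as a linear combination of monomials in $a_{ij}, b_{ij}, c_{ij}$. My plan is to isotope $d_{ij}$ so that it runs very close to the two boundary edges meeting at the neighboring puncture shared with, say, $a$, and then invoke \eqref{wzh.five}, \eqref{wzh.six}, \eqref{wzh.seven} to trade $d_{ij}$ for a sum of diagrams each of which either has strictly smaller complexity (and is handled inductively) or is a product of stated corner arcs already in the list. Tracking the state indices and the prefactors $\mathbbm{a}, \mathbbm{c}_i, \mathbbm{t}$ through these successive rewritings is the main technical obstacle; however, since only the existence of such an expression is required for the lemma, no closed-form identity is needed, and the reduction terminates on the finite family of isotopy types above.
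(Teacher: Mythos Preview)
The paper does not prove this lemma; it is quoted from \cite[Theorem~6.1(c)]{LY23}, so there is no in-paper argument to compare against. Your proposal must therefore be assessed on its own.

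Stage~1 has a minor misstep: relation~\eqref{wzh.four} resolves a sink--source \emph{pair}, not a lone $n$-valent vertex; a single vertex should instead be pushed to the boundary and absorbed via~\eqref{wzh.five}, which is always possible in the simply connected $\mathbb{P}_4$. The substantive gap is in Stages~2 and~3. You assert that isotoping a diagonal arc (or the fourth corner arc $d_{ij}$) near a puncture and invoking~\eqref{wzh.five}--\eqref{wzh.seven} rewrites it as a sum of products of corner arcs. But~\eqref{wzh.five} manipulates $n$-valent vertices, \eqref{wzh.six} evaluates a cap whose two endpoints already lie on the boundary, and~\eqref{wzh.seven} \emph{creates} two boundary endpoints from an interior cap---none of these turns a single embedded arc into a product of two shorter arcs inside $\mathscr{S}_\omega(\mathbb{P}_4)$, and the stacked product of two corner arcs meeting a common edge is simply their disjoint union, not a concatenation. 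Eliminating $d_{ij}$ in fact requires an algebraic identity of quantum-Cramer type (the matrix of stated corner arcs at a puncture has quantum determinant a central unit, so its entries are expressible through quantum minors built from the adjacent corner matrices); deriving this diagrammatically needs a specific combination of~\eqref{wzh.five} and~\eqref{wzh.six} together with a separate treatment of orientation reversal of corner arcs. Your induction on ``complexity'' does not supply this mechanism, and the relations as you invoke them do not by themselves decrease it.
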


\begin{figure}[h]
    \centering
\begingroup%
  \makeatletter%
  \providecommand\color[2][]{%
    \errmessage{(Inkscape) Color is used for the text in Inkscape, but the package 'color.sty' is not loaded}%
    \renewcommand\color[2][]{}%
  }%
  \providecommand\transparent[1]{%
    \errmessage{(Inkscape) Transparency is used (non-zero) for the text in Inkscape, but the package 'transparent.sty' is not loaded}%
    \renewcommand\transparent[1]{}%
  }%
  \providecommand\rotatebox[2]{#2}%
  \newcommand*\fsize{\dimexpr\f@size pt\relax}%
  \newcommand*\lineheight[1]{\fontsize{\fsize}{#1\fsize}\selectfont}%
  \ifx\svgwidth\undefined%
    \setlength{\unitlength}{96.37795276bp}%
    \ifx\svgscale\undefined%
      \relax%
    \else%
      \setlength{\unitlength}{\unitlength * \real{\svgscale}}%
    \fi%
  \else%
    \setlength{\unitlength}{\svgwidth}%
  \fi%
  \global\let\svgwidth\undefined%
  \global\let\svgscale\undefined%
  \makeatother%
  \begin{picture}(1,1)%
    \lineheight{1}%
    \setlength\tabcolsep{0pt}%
    \put(0,0){\includegraphics[width=\unitlength,page=1]{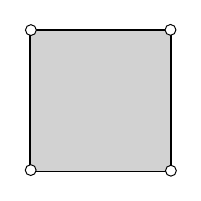}}%
    \put(0.03084271,0.49939851){\color[rgb]{0,0,0}\makebox(0,0)[lt]{\lineheight{1.25}\smash{\begin{tabular}[t]{l}$e_3$\end{tabular}}}}%
    \put(0.40414908,0.27322287){\color[rgb]{0,0,0}\makebox(0,0)[lt]{\lineheight{1.25}\smash{\begin{tabular}[t]{l}$a$\end{tabular}}}}%
    \put(0.33709571,0.56961649){\color[rgb]{0,0,0}\makebox(0,0)[lt]{\lineheight{1.25}\smash{\begin{tabular}[t]{l}$b$\end{tabular}}}}%
    \put(0.57763257,0.60308036){\color[rgb]{0,0,0}\makebox(0,0)[lt]{\lineheight{1.25}\smash{\begin{tabular}[t]{l}$c$\end{tabular}}}}%
    \put(0.87128392,0.49939851){\color[rgb]{0,0,0}\makebox(0,0)[lt]{\lineheight{1.25}\smash{\begin{tabular}[t]{l}$e_5$\end{tabular}}}}%
    \put(0.46050854,0.89131787){\color[rgb]{0,0,0}\makebox(0,0)[lt]{\lineheight{1.25}\smash{\begin{tabular}[t]{l}$e_4$\end{tabular}}}}%
    \put(0.46050854,0.05087684){\color[rgb]{0,0,0}\makebox(0,0)[lt]{\lineheight{1.25}\smash{\begin{tabular}[t]{l}$e_2$\end{tabular}}}}%
    \put(0,0){\includegraphics[width=\unitlength,page=2]{P4_corner_arcs.pdf}}%
  \end{picture}%
\endgroup%

    \caption{Three corner arcs $a,b,c$ in $\mathbb P_4$. Four edges of $\mathbb P_4$ are labeled by $e_2,e_3,e_4,e_5$}\label{P4_corner_arcs}
\end{figure}

In view of the definition of the bad arcs and reduced stated ${\rm SL}_n$-skein algebra, we get:

\begin{lemma}\label{lem-satuared-rd}
    The reduced stated ${\rm SL}_n$-skein algebra $\rdP$ is generated by
    $a_{ij},b_{ij},c_{ij}$ for $i,j \in \{1,\ldots,n\}$ with $i\geq j$.
\end{lemma}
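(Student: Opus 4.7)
The plan is straightforward: deduce the reduced statement from the non-reduced version (Lemma \ref{lem-satuared}) by showing that the generators with $i < j$ are already killed in the quotient.

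First, I would recall the definitions: $\rdP = \mathscr{S}_\omega(\mathbb{P}_4)/I^{\text{bad}}$, where $I^{\text{bad}}$ is the two-sided ideal generated by the bad arcs $C(p)_{ij}$ and $\cev{C}(p)_{ij}$ for boundary punctures $p$ and indices $i<j$. By Lemma \ref{lem-satuared}, the full algebra $\mathscr{S}_\omega(\mathbb{P}_4)$ is generated by $\{a_{ij}, b_{ij}, c_{ij} : 1 \le i,j \le n\}$, so by passing to the quotient, the images of these elements generate $\rdP$.

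Next, I would observe that each of the three arcs $a$, $b$, $c$ in Figure \ref{P4_corner_arcs} is, by construction, a corner arc at one of the boundary punctures of $\mathbb{P}_4$ (not lying on a monogon component, since $\mathbb{P}_4$ has no monogon component). Thus, according to the orientation of the arc with respect to its puncture, each stated arc $a_{ij}$ is of the form $C(p_a)_{ij}$ or $\cev{C}(p_a)_{ij}$ for some boundary puncture $p_a$; similarly for $b_{ij}$ and $c_{ij}$. Consequently, whenever $i<j$, the elements $a_{ij}$, $b_{ij}$, $c_{ij}$ are bad arcs in the sense of \S\ref{subsec.reduced_stated_SLn-skein_algebras}, hence lie in $I^{\text{bad}}$ and represent the zero element of $\rdP$.

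Combining the two observations, the images of $\{a_{ij}, b_{ij}, c_{ij} : 1 \le i,j \le n, \ i \ge j\}$ already generate $\rdP$, which is exactly the claim. This is a direct consequence of Lemma \ref{lem-satuared} together with the defining property of bad arcs, so no real obstacle is expected; the only point that warrants care is the identification of each $a_{ij}$, $b_{ij}$, $c_{ij}$ (for $i<j$) as an element of $C_{p}\cup\cev{C}_{p}$ for the appropriate puncture $p$, which is immediate from the picture.
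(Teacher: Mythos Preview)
Your proposal is correct and follows essentially the same approach as the paper, which simply remarks that the lemma follows ``in view of the definition of the bad arcs and reduced stated ${\rm SL}_n$-skein algebra''; you have merely made explicit the two steps (quotient of Lemma~\ref{lem-satuared} and identification of the $i<j$ generators as bad arcs) that the paper leaves implicit.
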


Thus it suffices to check \eqref{compatibility_equation} when applied to these generators. See \cite[Lem.4.2]{Kim21} for another argument that allows us to focus only on these corner arcs in $\mathbb{P}_4$.

We now explain our proof of \eqref{compatibility_equation} for the generator $a_{ij} \in \rdP$, with $i\ge j$. Proof by a direct computation would be quite involved; see \cite{Kim21} for such a proof, for the case $n=3$. Instead, we adapt the methodology using paths lying in a certain `network', which is developed initially by Schrader and Shapiro in \cite{SS17} and further investigated in \cite{CS23,LY23}. This methodology, which we shall now briefly recall, will let us bypass difficult computations. 

Choose any triangulation of $\mathbb{P}_4$. For each triangle $t$ of this triangulation, consider its $n$-triangulation, which subdivides the triangle $t$ into $n^2$ small triangles. Consider a graph dual to this $n$-triangulation of $t$, such that the set of all edges of this graph is in bijection with the set of all edges of the $n$-triangulation. In particular, for each small triangle there is a vertex of this graph, and for each boundary edge of the $n$-triangulation there is a vertex; see the blue graph in Figure \ref{P4_networks}. These graphs for different triangles can naturally be glued together.

Now we give orientations to the edges of this graph in a particular way that depends on the choice of a distinguished side of each (big) triangle; we call the resulting oriented graph a {\bf network}. There is a left-turn version and a right-turn version; here we only need to consider the left-turn version. See \cite{SS17,CS23,LY23} for the precise recipe. In particular, in each (big) triangle, the orientations of the edges meeting the distinguished side of the triangle are `outward', while those of the edges meeting the other sides are `inward'. Figure \ref{P4_networks} presents examples of (left-turn) networks $\mathcal{N}$ and $\mathcal{N}'$ for the two triangulations $\lambda$ and $\lambda'$ of $\mathbb{P}_4$, where $e_3$
is the `outward' side, and 
$e_2,e_4,e_5$ are the `inward' sides, where we are using the same labels for the four boundary arcs of $\mathbb{P}_4$ as in Figure \ref{P4_two_triangulations}. Although Figure \ref{P4_networks} is only about the case when $n=4$, we believe that the readers can easily see how it would work for general $n$. We label the vertices of these networks lying on the bottom side 
$e_2$ as $\alpha_1,\ldots,\alpha_n$, and those lying on the left side 
$e_3$ as $\beta_1,\ldots,\beta_n$, as appearing in Figure \ref{P4_networks}.

\begin{figure}[h]
    \centering
    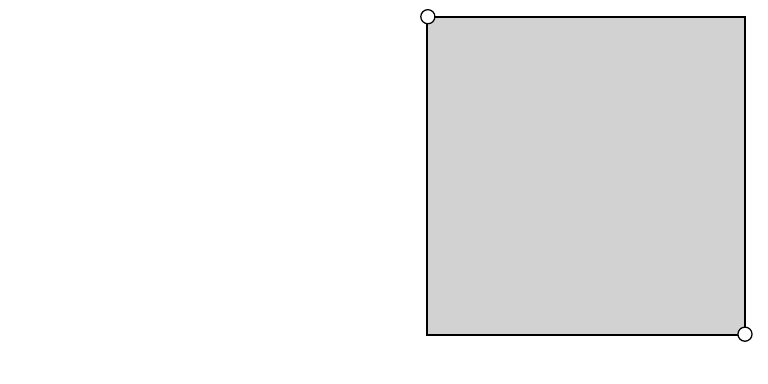
    \caption{(left-turn) Networks for triangulations of $\mathbb{P}_4$; when $n=4$}\label{P4_networks}
\end{figure}

A {\bf path} in a network is a concatenation of oriented edges of the network. For $1\le i,j\le n$, let $\mathcal{N}(ij)$ (resp. $\mathcal{N}'(ij)$) be the set of all paths in the network $\mathcal{N}$ (resp. $\mathcal{N}'$) from $\alpha_i$ to $\beta_j$. For example, $\mathcal{N}(11)$ consists of a single path, which is made of three oriented edges. As can be verified by inspection in Figure \ref{P4_networks}, $\mathcal{N}(21)$ consists of two paths, if $n\ge 2$; one path is made of six oriented edges, and the other path of eight oriented edges.

We shall soon see that the role of paths is that the value of a stated arc under the quantum trace map can be presented as a sum over all paths starting and ending with specified boundary vertices of a suitable network. The summand corresponding to a path $p$ is a balanced Weyl-ordered Laurent monomial $Z^{\bf t}$ (see \eqref{eq-weyl-order-Z}). To describe the exponent ${\bf t}$, we need some definitions.

For any path $p$ in $\mathcal{N}(ij)$ (resp. $\mathcal{N}'(ij)$), define the element 
${\bf k}(p,\lambda) = (k_v(p))_{v \in V_\lambda} \in \mathbb Z^{V_{\lambda}}$ (resp. ${\bf k}(p,\lambda') = (k_v(p))_{v\in V_{\lambda'}} \in \mathbb Z^{V_{\lambda'}}$) as
$$
k_v(p)=
\begin{cases}
    1 &\text{ if $v \in V_\lambda$ (resp. $v\in V_{\lambda'}$) lies strictly on the left of the path $p$,}\\
    0 & \text{otherwise},
\end{cases}
$$
where the elements of $V_\lambda$ (resp. $V_{\lambda'}$) are naturally thought of as vertices of the $n$-triangulation of the triangles of $\lambda$ (resp. $\lambda'$) used in the definition of the network $\mathcal{N}$ (resp. $\mathcal{N}'$); see Figure \ref{P4_networks}.

Note that for each $i=1,2,\ldots,n$, $\mathcal{N}(ii)$ (resp. $\mathcal{N}'(ii)$) consists of a single path, say $p_i$ (resp. $p_i'$). Now define
$$
{\bf k} := \sum_{1\leq i\leq n} {\bf k}(p_i,\lambda) \in \mathbb{Z}^{V_\lambda},\qquad
{\bf k}' := \sum_{1\leq i\leq n} {\bf k}(p_i',\lambda') \in \mathbb{Z}^{V_{\lambda'}}.
$$

\begin{lemma}[quantum trace as sum over paths in a network]\label{lem.quantum_trace_as_sum_over_paths}
    One has
    \begin{enumerate}[label={\rm (\arabic*)}]
        \item $\displaystyle 
        \overline{\rm tr}_\lambda(a_{ij})$ equals $\sum_{p \in \mathcal{N}(ij)} [X^{{\bf k}(p,\lambda)} Z^{\bf k}] \in \mathcal{Z}^{\rm bl}_\omega(\mathbb{P}_4, \lambda)$.

        \item $\displaystyle 
        \overline{\rm tr}_{\lambda'}(a_{ij})$ equals $\sum_{p' \in \mathcal{N}'(ij)} [X^{{\bf k}(p',\lambda')} Z^{{\bf k}'}] \in \mathcal{Z}^{\rm bl}_\omega(\mathbb{P}_4, \lambda').$
    \end{enumerate}
\end{lemma}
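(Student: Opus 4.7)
The plan is to derive both identities from L\^e-Yu's triangle formula \cite[Theorem 10.5]{LY23} by cutting $\mathbb{P}_4$ along the diagonal of the relevant triangulation and invoking Theorem \ref{thm-trace-cut}. Since the splitting embedding $\mathcal{S}_\bullet$ on the $n$-th root quantum torus algebras (which we already know to restrict to the balanced subalgebras) is injective, each identity is equivalent to its image on the cut surface, which consists of two triangles to which L\^e-Yu's triangle formula directly applies.

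For part (1), let $e$ be the unique non-boundary edge of $\lambda$. It separates $\mathbb{P}_4$ into two triangles, and the arc $a$ sits entirely inside the triangle $\tau$ containing the corner where $e_2$ meets $e_3$. Because $a_{ij}$ is disjoint from $e$, the splitting formula \eqref{eq-def-splitting} yields $\mathbb{S}_e(a_{ij}) = a_{ij}^{\tau} \otimes \varnothing$. Theorem \ref{thm-trace-cut} then identifies $\mathcal{S}_e \circ \overline{\mathrm{tr}}_{\lambda}(a_{ij})$ with $\overline{\mathrm{tr}}_{\lambda_e}(\mathbb{S}_e(a_{ij}))$, and L\^e-Yu's triangle formula expresses the latter as a sum over paths in the dual network of $\tau$ from $\alpha_i$ to $\beta_j$. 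By inspection of Figure \ref{P4_networks}, these triangle paths are exactly the elements of $\mathcal{N}(ij)$, so the task reduces to verifying that $\mathcal{S}_e$ sends the claimed sum on $\mathbb{P}_4$ to the corresponding triangle sum, which is a direct computation from definitions \eqref{cutting_homomorphism_for_Z_omega} and \eqref{Weyl_ordering_Z}.

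For part (2), let $e'$ be the non-boundary edge of $\lambda'$, which crosses $a$ transversely exactly once. The splitting formula now gives $\mathbb{S}_{e'}(a_{ij}) = \sum_{k=1}^{n} a_{ik}^{(1)} \otimes a_{kj}^{(2)}$, where $a^{(1)}, a^{(2)}$ are stated corner arcs in the two triangles $\sigma_1, \sigma_2$ of $\lambda'$. Applying L\^e-Yu's triangle formula to each factor expresses $\overline{\mathrm{tr}}_{\lambda'_{e'}}(\mathbb{S}_{e'}(a_{ij}))$ as a triple sum over $k$, a path $p^{(1)}$ from $\alpha_i$ to the $k$-th diagonal vertex in $\sigma_1$, and a path $p^{(2)}$ from that vertex to $\beta_j$ in $\sigma_2$. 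The combinatorial core of the argument is the bijection, given by concatenation at $e'$, between such triples and the set $\mathcal{N}'(ij)$, under which the two triangle exponent vectors glue to ${\bf k}(p, \lambda')$ for the concatenated path $p$.

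The main obstacle in both parts is the normalization bookkeeping on the diagonal: one must verify that $\mathcal{S}_e(Z^{{\bf k}})$, respectively $\mathcal{S}_{e'}(Z^{{\bf k}'})$, equals the product of the two per-triangle normalization factors appearing in L\^e-Yu's triangle formula, including the Weyl-ordering constant produced when the $X$- and $Z$-parts are interleaved via \eqref{cutting_homomorphism_for_Z_omega}. The expected reduction is to the observation that for any vertex $v$ lying on the diagonal, the count $k_v = \#\{i : p_i \text{ passes strictly to the left of } v\}$ decomposes as the sum of the analogous counts measured in the two triangles on either side of the diagonal, matching the splitting $Z_v \mapsto Z_{v'} Z_{v''}$ precisely and accounting for the $\omega$-powers that arise when bringing the two halves into Weyl-ordered form.
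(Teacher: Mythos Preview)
Your overall strategy is correct and matches the paper's approach closely: for part (2) the paper does exactly what you outline (cut along the diagonal $e_1'$, apply the splitting $\mathbb{S}_{e_1'}$ to get $\sum_t a_{it}^\alpha\otimes a_{tj}^\beta$, invoke \cite[Theorem 10.5]{LY23} on each triangle, and use the concatenation bijection $\bigsqcup_t \mathcal{N}^\alpha(it)\times\mathcal{N}^\beta(tj)\to\mathcal{N}'(ij)$). For part (1) the paper is slightly more direct: rather than cutting, it simply observes that the network edges of $\mathcal{N}$ meeting the diagonal of $\lambda$ are oriented \emph{into} the left triangle, so every path in $\mathcal{N}(ij)$ already lives entirely in that triangle, and \cite[Theorem 10.5]{LY23} applies with no gluing step. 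Your cutting argument for (1) is not wrong, just more work than needed.

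There is a genuine error in your final paragraph, however. You claim that for a diagonal vertex $v$ the count $k'_v$ ``decomposes as the sum of the analogous counts measured in the two triangles,'' matching $Z_v\mapsto Z_{v'}Z_{v''}$. This is backwards: since $\mathcal{S}_{e'}(Z_v^{k'_v})=Z_{v'}^{k'_v}Z_{v''}^{k'_v}$, what you actually need is that $k'_v$ \emph{equals} each of the two triangle counts $k^\alpha_{v'}$ and $k^\beta_{v''}$, not that it equals their sum. (Also, your phrasing ``$p_i$ passes strictly to the left of $v$'' has the roles reversed; the definition is that $v$ lies strictly to the left of $p_i$.) The paper sidesteps this exponent-matching bookkeeping entirely by observing that $\mathcal{S}_{e'}$ sends Weyl-ordered Laurent monomials to Weyl-ordered Laurent monomials (as in \cite[Lem.3.19(3)--(5)]{Kim21}); once the underlying exponent vectors are seen to agree, which is straightforward from the definition of ``left of a path'' being local near the diagonal, the $\omega$-powers take care of themselves. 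If you carry out your proposed verification with the corrected statement $k'_v=k^\alpha_{v'}=k^\beta_{v''}$, it goes through.
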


\begin{proof}
    (1) The edges of the network $\mathcal{N}$ meeting the diagonal ideal arc of $\lambda$ are oriented toward the lower-left direction as seen in Figure \ref{P4_networks}; so a path in $\mathcal{N}$ starting at a vertex in the left triangle \raisebox{-0,3\height}{\scalebox{0.25}{
\begingroup%
  \makeatletter%
  \providecommand\color[2][]{%
    \errmessage{(Inkscape) Color is used for the text in Inkscape, but the package 'color.sty' is not loaded}%
    \renewcommand\color[2][]{}%
  }%
  \providecommand\transparent[1]{%
    \errmessage{(Inkscape) Transparency is used (non-zero) for the text in Inkscape, but the package 'transparent.sty' is not loaded}%
    \renewcommand\transparent[1]{}%
  }%
  \providecommand\rotatebox[2]{#2}%
  \newcommand*\fsize{\dimexpr\f@size pt\relax}%
  \newcommand*\lineheight[1]{\fontsize{\fsize}{#1\fsize}\selectfont}%
  \ifx\svgwidth\undefined%
    \setlength{\unitlength}{73.7007874bp}%
    \ifx\svgscale\undefined%
      \relax%
    \else%
      \setlength{\unitlength}{\unitlength * \real{\svgscale}}%
    \fi%
  \else%
    \setlength{\unitlength}{\svgwidth}%
  \fi%
  \global\let\svgwidth\undefined%
  \global\let\svgscale\undefined%
  \makeatother%
  \begin{picture}(1,1)%
    \lineheight{1}%
    \setlength\tabcolsep{0pt}%
    \put(0,0){\includegraphics[width=\unitlength,page=1]{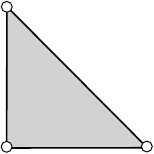}}%
  \end{picture}%
\endgroup%
}} can never leave this triangle. Therefore each path $p \in \mathcal{N}(ij)$ is completely contained in the left triangle. Then the sought-for result follows directly from \cite[Theorem 10.5]{LY23}, which expresses the quantum trace for a triangle $\mathbb{P}_3$ as a sum over paths in a network.

    (2) A path $p \in \mathcal{N}'(ij)$ necessarily meets both triangles of $\lambda'$, so \cite[Theorem 10.5]{LY23} does not directly apply. Denote by 
    $e_1'$ the diagonal ideal arc of $\lambda'$. For each $i=1,\ldots,n$, recall that there is only one path in $\mathcal{N}'(ii)$, which we denoted by $p_i'$; this path $p_i'$ intersects 
    $e_1'$ at a single point, which we denote by $\gamma_i$ as in Figure \ref{P4_networks}.

    We now cut $\mathbb{P}_4$ along 
    $e_1'$. Note that  $\mathsf{Cut}_{
    e_1'}(\mathbb P_4)$ consists of two copies of triangles $\mathbb P_3$. The triangle \raisebox{-0,3\height}{\scalebox{0.25}{
\begingroup%
  \makeatletter%
  \providecommand\color[2][]{%
    \errmessage{(Inkscape) Color is used for the text in Inkscape, but the package 'color.sty' is not loaded}%
    \renewcommand\color[2][]{}%
  }%
  \providecommand\transparent[1]{%
    \errmessage{(Inkscape) Transparency is used (non-zero) for the text in Inkscape, but the package 'transparent.sty' is not loaded}%
    \renewcommand\transparent[1]{}%
  }%
  \providecommand\rotatebox[2]{#2}%
  \newcommand*\fsize{\dimexpr\f@size pt\relax}%
  \newcommand*\lineheight[1]{\fontsize{\fsize}{#1\fsize}\selectfont}%
  \ifx\svgwidth\undefined%
    \setlength{\unitlength}{73.7007874bp}%
    \ifx\svgscale\undefined%
      \relax%
    \else%
      \setlength{\unitlength}{\unitlength * \real{\svgscale}}%
    \fi%
  \else%
    \setlength{\unitlength}{\svgwidth}%
  \fi%
  \global\let\svgwidth\undefined%
  \global\let\svgscale\undefined%
  \makeatother%
  \begin{picture}(1,1)%
    \lineheight{1}%
    \setlength\tabcolsep{0pt}%
    \put(0,0){\includegraphics[width=\unitlength,page=1]{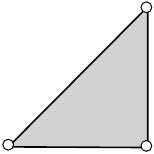}}%
  \end{picture}%
\endgroup%
}} containing $\alpha_i$ (resp. the triangle \raisebox{-0,3\height}{\scalebox{0.25}{
\begingroup%
  \makeatletter%
  \providecommand\color[2][]{%
    \errmessage{(Inkscape) Color is used for the text in Inkscape, but the package 'color.sty' is not loaded}%
    \renewcommand\color[2][]{}%
  }%
  \providecommand\transparent[1]{%
    \errmessage{(Inkscape) Transparency is used (non-zero) for the text in Inkscape, but the package 'transparent.sty' is not loaded}%
    \renewcommand\transparent[1]{}%
  }%
  \providecommand\rotatebox[2]{#2}%
  \newcommand*\fsize{\dimexpr\f@size pt\relax}%
  \newcommand*\lineheight[1]{\fontsize{\fsize}{#1\fsize}\selectfont}%
  \ifx\svgwidth\undefined%
    \setlength{\unitlength}{73.7007874bp}%
    \ifx\svgscale\undefined%
      \relax%
    \else%
      \setlength{\unitlength}{\unitlength * \real{\svgscale}}%
    \fi%
  \else%
    \setlength{\unitlength}{\svgwidth}%
  \fi%
  \global\let\svgwidth\undefined%
  \global\let\svgscale\undefined%
  \makeatother%
  \begin{picture}(1,1)%
    \lineheight{1}%
    \setlength\tabcolsep{0pt}%
    \put(0,0){\includegraphics[width=\unitlength,page=1]{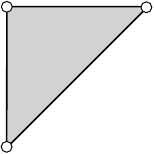}}%
  \end{picture}%
\endgroup%
}} containing $\beta_j$) is denoted by $\mathbb P_3^\alpha$ (resp. $\mathbb P_3^\beta$). We use ${\bf pr}_{
    e_1'}$ to denote the projection from 
    $\mathsf{Cut}_{
    e_1'}(\mathbb P_4)$ to $\mathbb P_4$. For each $i=1,\ldots,n$, note that 
    $({\bf pr}_{
    e_1'})^{-1}(\gamma_i)$ consists of two elements. The one contained in $\mathbb P_4^\alpha$ (resp. $\mathbb P_4^\beta$) is denoted by $w_i^\alpha$ (resp. $w_i^\beta$). The network $\mathcal N'$ for $\mathbb P_4$ induces a network $({\bf pr}_{
    e_1'})^{-1}(\mathcal N')$ for $\mathsf{Cut}_{
    e_1'}(\mathbb P_4)$, which has two components. The component contained in $\mathbb P_3^\alpha$ (resp. $\mathbb P_3^\beta$) is denoted by $\mathcal N^\alpha$ (resp. $\mathcal N^\beta$).
    Then $\mathcal N^\alpha$ (resp. $\mathcal N^\beta$) contains vertices $w_i^\alpha$ (resp. $w_i^\beta$) for $i=1,\ldots,n$.
    The preimage $({\bf pr}_{
    e_1'})^{-1}(a)$ in $\mathsf{Cut}_{
    e_1'}(\mathbb{P}_4)$ of the oriented arc $a$ in $\mathbb{P}_4$ consists of two oriented arcs. The one contained in $\mathbb P_3^\alpha$ (resp. $\mathbb P_3^\beta$) is denoted by $a^\alpha$ (resp. $a^\beta$). For each  $i,j \in \{1,\ldots,n\}$, similarly as $a_{ij}$, we can define stated arcs $a_{ij}^\alpha$ (resp. $a_{ij}^\beta$) in $\mathbb P_3^\alpha$ (resp. $\mathbb P_3^\beta$), whose state values at the initial and the terminal endpoints are $i$ and $j$ respectively. 

    The definition of the splitting homomorphism $\mathbb{S}_{
    e_1'}$ in equation \eqref{eq-def-splitting} shows that
    $$
    \mathbb S_{
    e_1'}(a_{ij})
    =\sum_{1\leq t\leq n} a^\alpha_{it}\otimes a^\beta_{tj}\in \overline{\cS}_{\omega}(\mathbb P_3^\alpha)\otimes_R
    \overline{\cS}_{\omega}(\mathbb P_3^\beta)=\overline{\cS}_{\omega}(\mathsf{Cut}_{
    e_1'}(\mathbb P_4)).
    $$

    As before, let $\lambda'_{
    e_1'}$ denote the triangulation of $\mathsf{Cut}_{
    e_1'}(\mathbb{P}_4)=\mathbb{P}_3^\alpha \sqcup \mathbb{P}_3^\beta$ induced by $\lambda'$; that is, $\lambda'_{
    e_1'} = \lambda^\alpha \sqcup \lambda^\beta$, where $\lambda^\alpha$ (resp. $\lambda^\beta$) is the unique triangulation of $\mathbb{P}_3^\alpha$ (resp. $\mathbb{P}_3^\beta$). Hence one can express $
    \overline{\rm tr}_{\lambda'_{
    e_1'}} : \overline{\cS}_\omega(\mathsf{Cut}_{
    e_1'}(\mathbb{P}_4)) \to \mathcal{Z}_\omega(\mathsf{Cut}_{
    e_1'}(\mathbb{P}_4),\lambda'_{
    e_1'})$ as $
    \overline{\rm tr}_{\lambda^\alpha} \otimes 
    \overline{\rm tr}_{\lambda^\beta} : \overline{\cS}_\omega(\mathbb{P}_3^\alpha) \otimes_R \overline{\cS}_\omega(\mathbb{P}_3^\beta) \to \mathcal{Z}_\omega(\mathbb{P}_3^\alpha,\lambda^\alpha) \otimes_R \mathcal{Z}_\omega(\mathbb{P}_3^\beta,\lambda^\beta)$. For each of $
    \overline{\rm tr}_{\lambda^\alpha}$ and $
    \overline{\rm tr}_{\lambda^\beta}$ we apply \cite[Theorem 10.5]{LY23}. That is, note that
    \begin{align}
    \nonumber
        \overline{\rm tr}_{\lambda'_{
        e_1'}}(\mathbb{S}_{
        e_1'}(a_{ij})) & = (
        \overline{\rm tr}_{\lambda^\alpha} \otimes 
        \overline{\rm tr}_{\lambda^\beta}) (\sum_{1\le t\le n} a_{it}^\alpha \otimes a^\beta_{tj}) \\
        \nonumber
        & = \sum_{1\le t\le n} 
        \overline{\rm tr}_{\lambda^\alpha}(a_{it}^\alpha) \otimes 
        \overline{\rm tr}_{\lambda^\beta}(a_{tj}^\beta) \\
        \label{quantum_trace_for_lambda_e_prime}
        & = \sum_{1\le t \le n} (\sum_{p \in \mathcal{N}^\alpha(it)} [X^{{\bf k}(p,\lambda^\alpha)} Z^{{\bf k}^\alpha}] \otimes \sum_{p \in \mathcal{N}^\beta(tj)} [X^{{\bf k}(p,\lambda^\beta)} Z^{{\bf k}^\beta}] ), \\
        & \hspace{55mm} (\because \mbox{\cite[Theorem 10.5]{LY23}})
    \end{align}
where 
\begin{align*}
 \mathcal{N}^\alpha(it) & = \{\mbox{paths in $\mathcal{N}^\alpha$ from $\alpha_i$ to $\gamma_t^\alpha$}\}, \\
\mathcal{N}^\beta(tj) & = \{\mbox{paths in $\mathcal{N}^\beta$ from $\gamma_t^\beta$ to $\beta_j$}\}, \\
{\bf k}(p,\lambda^\alpha) & = (k_v(p))_{v\in V_{\lambda^\alpha}}, \quad k_v(p) = \left\{
\begin{array}{ll}
1 & \mbox{if $v \in \lambda^\alpha$ lies on the left of $p$}, \\
0 & \mbox{otherwise},
\end{array}
 \right. \\
{\bf k}(p,\lambda^\beta) & = (k_v(p))_{v\in V_{\lambda^\beta}}, \quad k_v(p) = \left\{
\begin{array}{ll}
1 & \mbox{if $v \in \lambda^\beta$ lies on the left of $p$}, \\
0 & \mbox{otherwise},
\end{array}
 \right. \\
{\bf k}^\alpha & = \sum_{i=1}^n {\bf k}_{(p_i)^\alpha}, \quad \mbox{where $(p_i)^\alpha$ is the only constituent path of $\mathcal{N}^\alpha(ii)$}, \\
{\bf k}^\beta & = \sum_{i=1}^n {\bf k}_{(p_i)^\beta}, \quad \mbox{where $(p_i)^\beta$ is the only constituent path of $\mathcal{N}^\beta(ii)$}.
\end{align*}

The natural concatenation yields a map
\begin{align}
\label{concatenation_bijection}
\bigsqcup_{1\le t \le n} (\mathcal{N}^\alpha(it) \times \mathcal{N}^\beta(tj)) \to \mathcal{N}'({ij}),
\end{align}
which is easily seen to be an injection. It is also surjective, as any path $p'$ in $\mathcal{N}'(ij)$ must meet the diagonal ideal arc $e_1'$ of $\lambda'$ exactly once, say at $\gamma_t$; then one can see that $p'$ is the concatenation of a uniquely determined path in $\mathcal{N}^\alpha(it)$, say $(p')^\alpha$, and a uniquely determined path in $\mathcal{N}^\beta(tj)$, say $(p')^\beta$.

Recall from \eqref{eq-splitting-torus} the algebra embedding
$$
\mathcal{S}_{e_1'} : \mathcal{Z}_\omega(\mathbb{P}_4,\lambda') \to \mathcal{Z}_\omega(\mathsf{Cut}_{e_1'}(\mathbb{P}_4), \lambda'_{e_1'})
\cong \mathcal{Z}_\omega(\mathbb{P}_3^\alpha,\lambda^\alpha) \otimes_R \mathcal{Z}_\omega(\mathbb{P}_3^\beta,\lambda^\beta).
$$
For any path $p'$ in $\mathcal{N}'(ij)$ we can observe
\begin{align}
\label{mathcal_S_e_prime_applied_to_P4}
    \mathcal{S}_{e_1'}[ X^{{\bf k}(p',\lambda')} Z^{{\bf k}'}] = [X^{{\bf k}((p')^\alpha,\lambda^\alpha)} Z^{ {\bf k}^\alpha }] \otimes [X^{{\bf k}((p')^\beta,\lambda^\beta)} Z^{ {\bf k}^\beta }],
\end{align}
which one can see from the definition of $\mathcal{S}_{e_1'}$ in \eqref{cutting_homomorphism_for_Z_omega} to hold up to a multiplicative constant that is an integer power of $\omega^{1/2}$. One can further observe that the multiplicative constant is $1$, by showing that $\mathcal{S}_{e_1'}$ sends a Weyl-ordered Laurent monomial to a Weyl-ordered Laurent monomial, which in turn is straightforward to prove, e.g. by the same argument as in the proof of \cite[Lem.3.19(3)--(5)]{Kim21}.

Now, for $i,j \in \{1,\ldots,n\}$, take the sum of \eqref{mathcal_S_e_prime_applied_to_P4} over all paths $p'$ in $\mathcal{N}'(ij)$:
\begin{align*}
    \mathcal{S}_{e_1'} (\sum_{p' \in \mathcal{N}'(ij)} [ X^{{\bf k}(p',\lambda')} Z^{{\bf k}'}]) & = \sum_{p' \in \mathcal{N}'(ij)} ( [X^{{\bf k}((p')^\alpha,\lambda^\alpha)} Z^{ {\bf k}^\alpha }] \otimes [X^{{\bf k}((p')^\beta,\lambda^\beta)} Z^{ {\bf k}^\beta }] ) \\
    & = \sum_{1\le t\le n} (\sum_{p \in \mathcal{N}^\alpha(it)} [X^{{\bf k}(p,\lambda^\alpha)} Z^{ {\bf k}^\alpha }]) \otimes (\sum_{p \in \mathcal{N}^\beta(tj)} [X^{{\bf k}(p,\lambda^\beta)} Z^{ {\bf k}^\beta }]),
\end{align*}
where the latter equality is due to the bijection in \eqref{concatenation_bijection}. We thus have
\begin{align*}
    \mathcal{S}_{e_1'} (\sum_{p' \in \mathcal{N}'(ij)} [ X^{{\bf k}(p',\lambda')} Z^{{\bf k}'}]) & = 
    \overline{\rm tr}_{\lambda'_{e_1'}}(\mathbb{S}_{e_1'}(a_{ij})) \qquad (\because \mbox{\eqref{quantum_trace_for_lambda_e_prime}}) \\
    & = \mathcal{S}_{e_1'} (
    \overline{\rm tr}_{\lambda'}(a_{ij})) \qquad (\because \mbox{Theorem \ref{thm-trace-cut}}).
\end{align*}
Since $\mathcal{S}_{e_1'}$ is injective, it follows that $\sum_{p' \in \mathcal{N}'(ij)} [ X^{{\bf k}(p',\lambda')} Z^{{\bf k}'}]$ equals $\overline{\rm tr}_{\lambda'}(a_{ij})$, as desired. 
\end{proof}

\begin{remark}
   Using the same technique, Lemma \ref{lem.quantum_trace_as_sum_over_paths} can be generalized to calculate the image of the stated corner arcs in any triangulable pb surface under the quantum trace map.
\end{remark}

Keep in mind that our goal is to show the following form of compatibility:
$$
\Theta^\omega_{\lambda\lambda'} (\overline{\rm tr}_{\lambda'}(a_{ij})) = \overline{\rm tr}_\lambda(a_{ij}).
$$
The reason why we invoked the description of the value of the quantum trace in terms of sum over paths in a network is because Schrader and Shapiro showed a form of compatibility of such sums over paths with mutations, namely \cite[Proposition 4.2]{SS17}. Each cluster mutation corresponds to mutation of a network (see \cite[Figure 9]{SS17}), and the sequence of cluster mutations realizing the flip $\lambda \leadsto \lambda'$ along the ideal arc $e_1$ in $\mathbb{P}_4$ (as in \eqref{two_seeds_connected_by_sequence_of_mutations}) transforms the network $\mathcal{N}$ into the network $\mathcal{N}'$. Applying \cite[Proposition 4.2]{SS17} to this mutation sequence, we obtain the following:
\begin{lemma}\label{lem.SS_core}
    For each $i,j\in\{1,\ldots,n\}$, one has
    \begin{align}
    \nonumber
    \Phi^q_{\lambda \lambda'} (\sum_{p' \in \mathcal{N}'(ij)} X^{{\bf k}(p',\lambda')}) = \sum_{p \in \mathcal{N}(ij)} X^{{\bf k}(p,\lambda)}. \qed
\end{align}
\end{lemma}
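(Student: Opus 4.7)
\emph{Proof proposal.} The plan is to deduce the identity by iterated application of Schrader--Shapiro's network-mutation compatibility \cite[Proposition 4.2]{SS17}, which asserts that a single square-move of a directed network implements a single quantum cluster $\mathcal{X}$-mutation on the path-sum associated to that network. Since by \eqref{eq-Theta1} we have $\Phi^q_{\lambda\lambda'} = \mu^q_{v_1}\circ\cdots\circ\mu^q_{v_r}$ with $r = \frac{1}{6}(n^3-n)$, it suffices to produce a sequence of left-turn networks
\[
\mathcal{N} = \mathcal{N}_0,\ \mathcal{N}_1,\ \ldots,\ \mathcal{N}_r = \mathcal{N}',
\]
one per intermediate seed $\mathcal{D}_s$ in the flip, such that $\mathcal{N}_s$ is obtained from $\mathcal{N}_{s-1}$ by the local square-move at the face labelled by $v_s$, and then to apply \cite[Proposition 4.2]{SS17} once per step.

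First I would carry out the combinatorial verification that the Fock--Goncharov--Shen sweep through the layers $V^{(0)}_{\lambda;e_1},\ldots,V^{(n-2)}_{\lambda;e_1}$, applied as network square-moves starting from $\mathcal{N}$, yields precisely the left-turn network $\mathcal{N}'$ of $\lambda'$, with the same distinguished side $e_3$ and the same boundary labels $\alpha_i,\beta_j$ as in Figure \ref{P4_networks}. This is essentially the picture-level content of \cite[Section 4]{SS17}. Next, for each $s\in\{1,\ldots,r\}$ and each $(i,j)\in\{1,\ldots,n\}^2$, let $P_s(ij)\in\mathcal{X}_q(\mathcal{D}_s)$ denote the path-sum associated to $\mathcal{N}_s$, so that
\[
P_0(ij) \;=\; \sum_{p\in\mathcal{N}(ij)} X^{{\bf k}(p,\lambda)}
\qquad\text{and}\qquad
P_r(ij) \;=\; \sum_{p'\in\mathcal{N}'(ij)} X^{{\bf k}(p',\lambda')}.
\]
Schrader--Shapiro's compatibility then reads $\mu^q_{v_s}(P_s(ij)) = P_{s-1}(ij)$, and composing these equalities for $s=1,\ldots,r$ yields $\Phi^q_{\lambda\lambda'}(P_r(ij)) = P_0(ij)$, which is exactly the claim.

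The main obstacle is reconciling conventions between \cite{SS17} and the present paper: their quantum parameter and Weyl-ordering normalization must be matched with ours, and one must check that their boundary data (which endpoints are sources versus sinks, the distinguished side, the left-turn rule) agree with those used in Lemma \ref{lem.quantum_trace_as_sum_over_paths}. Once these identifications are fixed --- essentially a translation of notation, and already implicit in \cite{LY23} where the same networks appear in the triangle case --- \cite[Proposition 4.2]{SS17} applies verbatim at each step and no further non-trivial computation is required.
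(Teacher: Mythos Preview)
Your proposal is correct and follows essentially the same approach as the paper: the paper simply observes that each cluster mutation in the flip sequence corresponds to a network square-move (cf.\ \cite[Figure 9]{SS17}) transforming $\mathcal{N}$ step by step into $\mathcal{N}'$, and then cites \cite[Proposition 4.2]{SS17} applied iteratively along this sequence to conclude. Your write-up is a more detailed unpacking of exactly this argument, including the reasonable caveat about matching conventions.
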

We stress that this lemma settles the most involved part of the proof of our sought-for Proposition \ref{prop-P4-compatibility_new}. That is, a direct computational proof of this lemma, for each fixed $n$, would require an enormous effort; see \cite{Kim21} for a version of such a proof for $n=3$. In a sense, the advantage of using the network description of quantum trace mainly lies in this lemma.

We now perform the remaining steps to prove Proposition \ref{prop-P4-compatibility_new}.
\begin{lemma}\label{lem.Theta_on_normalizer_monomial}
    One has
    \begin{align}
        \nonumber
        \Theta^\omega_{\lambda\lambda'}(Z^{{\bf k}'}) = Z^{\bf k}.
    \end{align}
\end{lemma}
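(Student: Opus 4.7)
The plan is to exploit the diagonal specialization $i = j$ of Lemma \ref{lem.SS_core}, in which each of $\mathcal{N}(ii)$ and $\mathcal{N}'(ii)$ consists of a single path ($p_i$ and $p_i'$ respectively), to first establish the transformation formula at the level of $n$-th powers and then upgrade to the $n$-th root claim.

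First, I specialize Lemma \ref{lem.SS_core} to $i = j$, obtaining $\Phi^q_{\lambda\lambda'}(X^{{\bf k}(p_i', \lambda')}) = X^{{\bf k}(p_i, \lambda)}$ for $i = 1, \ldots, n$. Since $\Theta^\omega_{\lambda\lambda'}$ extends $\Phi^q_{\lambda\lambda'}$ (Remark \ref{rem-extension}) and $X^{\bf t} = Z^{n\bf t}$ under the embedding $X_v = Z_v^n$, this lifts to
\[
\Theta^\omega_{\lambda\lambda'}(Z^{n {\bf k}(p_i', \lambda')}) = Z^{n {\bf k}(p_i, \lambda)}, \quad i = 1, \ldots, n.
\]
Multiplying these identities in a fixed order, and observing that because $\Phi^q_{\lambda\lambda'}$ preserves $q$-commutation relations the cross-commutation constants $Q_{\lambda'}({\bf k}(p_i', \lambda'), {\bf k}(p_j', \lambda'))$ and $Q_\lambda({\bf k}(p_i, \lambda), {\bf k}(p_j, \lambda))$ agree for all $i, j$, the Weyl-ordering constants on both sides of the resulting product match, yielding
\[
\Theta^\omega_{\lambda\lambda'}(Z^{n {\bf k}'}) = Z^{n {\bf k}}.
\]

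Setting $A := \Theta^\omega_{\lambda\lambda'}(Z^{{\bf k}'})$, and using that $(Z^{\bf t})^n = Z^{n \bf t}$ by skew-symmetry of $Q$, we get $A^n = Z^{n{\bf k}} = (Z^{\bf k})^n$. It then suffices to show that $A$ is itself a Weyl-ordered Laurent monomial $Z^{\bf l}$ for some ${\bf l}\in\mathbb{Z}^{V_\lambda}$; for then $Z^{n \bf l} = Z^{n \bf k}$ immediately forces ${\bf l} = {\bf k}$, completing the proof.

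The remaining task, and the main obstacle, is to verify that $A$ is a Weyl-ordered Laurent monomial. Decomposing $\Theta^\omega_{\lambda\lambda'} = \nu^\omega_{v_1} \circ \cdots \circ \nu^\omega_{v_r}$ and further each $\nu^\omega_{v_i} = \nu^{\sharp\omega}_{v_i} \circ \nu'_{v_i}$, the monomial parts $\nu'_{v_i}$ preserve Weyl-ordered monomials by Lemma \ref{lem.nu_k_prime_preserves_Weyl-ordering}, whereas the automorphism parts $\nu^{\sharp\omega}_{v_i}$ send a Weyl-ordered monomial $Z^{\bf t}$ to $Z^{\bf t} \cdot F^q(X_{v_i}, m)$ with $m = \frac{1}{n}\sum_v Q(v_i, v) t_v$; this factor is trivial precisely when $m = 0$. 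The plan is to show that at each of the $r = \frac{1}{6}(n^3 - n)$ steps, the intermediate monomial (obtained from $Z^{{\bf k}'}$ by the preceding $\nu'_{v_j}$'s) satisfies this vanishing condition with respect to the next mutation variable. These intermediate monomials are expected to be the diagonal normalizers of intermediate networks produced from $\mathcal{N}'$ by a matching sequence of network mutations in the sense of Schrader-Shapiro \cite[Proposition 4.2]{SS17}, and such normalizers should commute (in the $\mathcal{X}_q$ sense) with the next mutation variable by construction. Verifying this combinatorial commutativity at each step, via an analysis of how the set of vertices strictly left of a diagonal path transforms under a network mutation, constitutes the core difficulty.
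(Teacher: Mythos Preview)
Your proposal correctly identifies the crux of the argument but stops short of proving it: you acknowledge that ``verifying this combinatorial commutativity at each step \ldots\ constitutes the core difficulty'' and propose to do this by tracking how the left-of-path vertex set transforms under each network mutation. That is precisely the step that needs to be done, and you have not done it. Moreover, the network-by-network verification you propose would be laborious.

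The paper's proof closes this gap with two simple observations that make the step-by-step network analysis unnecessary. First, work with each ${\bf k}(p_i',\lambda')$ individually rather than with the sum ${\bf k}'$. For a single diagonal path $p_i'$, one checks directly from the quiver $\Gamma_{\lambda'}$ that $\sum_{v\in\mathcal V} Q_{\lambda'}(u,v)\,k_v(p_i') = 0$ for every mutable vertex $u$ (this is the content of \eqref{Q_r_sum_zero}, and is a straightforward local inspection). Second, apply Lemma~\ref{lem-commute} with $\mathbb F=\mathbb Z$: it says exactly that this zero-sum condition is preserved by the monomial transformation $\nu'_{v_k}$ at each mutation. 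Hence at every intermediate seed the exponent vector still satisfies the exact vanishing condition, so by Lemma~\ref{lem.nu_sharp_well-defined} each $\nu^{\sharp\omega}_{v_k}$ contributes the trivial factor $F^q(X_{v_k},0)=1$, and $\Theta^\omega_{\lambda\lambda'}(Z^{{\bf k}(p_i',\lambda')})$ is a single Weyl-ordered monomial $Z^{{\bf t}^{(0)}}$. Running the identical argument with $\mu$'s and $X$'s and comparing with \eqref{Phi_q_sends_k_p_i_prime_to_k_p_i} identifies ${\bf t}^{(0)}={\bf k}(p_i,\lambda)$. Finally, the same zero-sum conditions force the $Z^{{\bf k}(p_i',\lambda')}$ (and likewise the $Z^{{\bf k}(p_i,\lambda)}$) to pairwise commute, so their product is $Z^{{\bf k}'}$ (resp.\ $Z^{\bf k}$), and the result follows.

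Note that once you have this, your detour through $n$-th powers (steps leading to $A^n=Z^{n{\bf k}}$) becomes unnecessary: the direct argument already produces $\Theta^\omega_{\lambda\lambda'}(Z^{{\bf k}(p_i',\lambda')})=Z^{{\bf k}(p_i,\lambda)}$ at the $n$-th root level.
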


Note that $\mathcal{N}'(ii) = \{p'_i\}$ and $\mathcal{N}(ii) = \{p_i\}$, so from Lemma \ref{lem.SS_core} applied to $i=j$ we get \begin{align}
\label{Phi_q_sends_k_p_i_prime_to_k_p_i}
    \Phi^q_{\lambda\lambda'}(X^{{\bf k}(p'_i,\lambda')}) = X^{{\bf k}(p_i,\lambda)},
\end{align}
hence $\Phi^q_{\lambda\lambda'}(X^{ {\bf k}'}) = X^{\bf k}$ holds up to a multiplicative constant that is an integer power of $q^{1/2}$. One can show that this constant is 1, as we shall see later. However, $\Phi^q_{\lambda\lambda'}(X^{ {\bf k}'}) = X^{\bf k}$ does not directly imply the sought-for $\Theta^\omega_{\lambda\lambda'}(Z^{{\bf k}'}) = Z^{\bf k}$, although these two are consistent with each other, in view of Remark \ref{rem-extension}. We give a direct proof of Lemma \ref{lem.Theta_on_normalizer_monomial} below.

Note that any ground ring $R$ is an algebra over $\mathbb Z[\omega^{\pm\frac{1}{2}}]$. Then we can use the functor $
- \otimes_{\mathbb Z[\omega^{\pm\frac{1}{2}}]} R$ to change the ground ring from 
$\mathbb Z[\omega^{\pm\frac{1}{2}}]$ to $R$.
Due to this functor, we assume that the ground ring is $\mathbb Z[\omega^{\pm\frac{1}{2}}]$ in the following proof.

\begin{proof}[Proof of Lemma \ref{lem.Theta_on_normalizer_monomial}]
Denote by $\mathcal{D}_\lambda$ and $\mathcal{D}_{\lambda'}$ the cluster seeds corresponding to $\lambda$ and $\lambda'$, and that the sequence of mutations at the vertices $v_1,v_2,\ldots,v_r$ turns $\mathcal{D}_\lambda$ to $\mathcal{D}_{\lambda'}$, as in \eqref{two_seeds_connected_by_sequence_of_mutations}. Like we did in the proof of Lemma \ref{lem-mutation-cutiing}, let $\mathcal{D}_0 = \mathcal{D}_\lambda$, and let $\mathcal{D}_k = \mu_{v_k}(\mathcal{D}_{k-1})$ for each $k=1,\ldots,r$, so that $\mathcal{D}_r = \mathcal{D}_{\lambda'}$ in particular. Denote by $\mathcal{V}$ the underlying vertex sets of all these seeds $\mathcal{D}_k$, $k=0,1,\ldots,r$, which are naturally identified with each other; in particular, $\mathcal{V} = V_\lambda = V_{\lambda'}$. Let $\mathcal{V}_{\rm mut}$ be the vertices lying in the interior of $\mathbb{P}_4$. Let $Q_k$ be the exchange matrix of $\mathcal{D}_k$. 

Let $i \in \{1,\ldots,n\}$. Define ${\bf t}^{(k)} = (t^{(k)}_v)_{v\in \mathcal{V}} \in \mathbb{Z}^\mathcal{V}$ by the following equations:
$$
{\bf t}^{(r)} = {\bf k}(p_i',\lambda'), \qquad
\nu'_{v_k}(Z^{{\bf t}^{(k)}}) = Z^{{\bf t}^{(k-1)}}, \quad k=r,r-1,\ldots,2,1.
$$
More explicitly, in view of \eqref{eq-quantum-mutation_Z}, the second equation says that
\begin{align}
    \label{nu_prime_v_k_on_exponents}
    t^{(k-1)}_{v_k} = - t^{(k)}_{v_k} + \sum_{v\in \mathcal{V}} [Q_{(k-1)}(v,v_k)]_+ t^{(k)}_v, \quad\mbox{and}\quad t^{(k-1)}_v = t^{(k)}_v \quad \mbox{for all $v\neq v_k$}.
\end{align}
For the seed $\mathcal{D}_{\lambda'}$, it is straightforward to observe that 
\begin{align}
    \label{Q_r_sum_zero}
    \sum_{v \in \mathcal{V}} Q_r(u,v) t^{(r)}_v = 0 \quad\mbox{for all $u \in \mathcal{V}_{\rm mut}$.}
\end{align}
One can translate this condition into $Z_u Z^{{\bf t}^{(r)}} = Z^{{\bf t}^{(r)}} Z_u \in \mathcal{Z}_\omega(\mathcal{D}^{(r)})$, $\forall u\in \mathcal{V}_{\rm mut}$. By applying Lemma \ref{lem-commute} to \eqref{Q_r_sum_zero} inductively, one obtains:
\begin{align}
\label{Q_k_minus_one_sum_zero}
    \mbox{For each $k=r,r-1,\ldots,1$, one has $\sum_{v\in \mathcal{V}} Q_{k-1} (u,v) t_v^{(k-1)} = 0$, $\forall u \in \mathcal{V}_{\rm mut}$.}
\end{align}
Since $\nu^\omega_{v_k} = \nu^{\sharp \omega}_{v_k} \circ \nu'_{v_k}$, in view of the action of $\nu^{\sharp \omega}_{v_k}$ as written in Lemma \ref{lem.nu_sharp_well-defined}, it is easy to see that \eqref{Q_k_minus_one_sum_zero} implies:
$$
\mbox{For each $k=r,r-1,\ldots,1$, one has $\nu^\omega_{v_k}(Z^{{\bf t}^{(k)}}) = Z^{{\bf t}^{(k-1)}}$.}
$$
Thus $\Theta^\omega_{\lambda\lambda'}(Z^{{\bf t}^{(r)}}) = \nu^\omega_{v_r} \cdots \nu^\omega_{v_1}(Z^{{\bf t}^{(r)}}) = Z^{{\bf t}^{(0)}}$. 

On the other hand, a verbatim proof, with all $\nu^?_?$ replaced by $\mu^?_?$ and $Z^?$ by $X^?$, shows $\Phi^q_{\lambda\lambda'}(X^{{\bf t}^{(r)}}) = X^{{\bf t}^{(0)}}$. Since ${\bf t}^{(r)} = {\bf k}(p_i',\lambda')$, now \eqref{Phi_q_sends_k_p_i_prime_to_k_p_i} (together with injectivity of $\Phi^q_{\lambda\lambda'}$) tells us that ${\bf t}^{(0)} = {\bf k}(p_i,\lambda)$. Hence we showed that
$$
\Theta^\omega_{\lambda\lambda'}(Z^{{\bf k}(p_i',\lambda')}) = Z^{{\bf k}(p_i,\lambda)}
$$
holds for all $i=1,\ldots,n$.

From \eqref{Q_r_sum_zero} it follows that $Z^{{\bf k}(p_i',\lambda')}$, $i=1,\ldots,n$, mutually commute with each other, hence $Z^{{\bf k}(p_1',\lambda')} Z^{{\bf k}(p_2',\lambda')} \cdots Z^{{\bf k}(p_n',\lambda')} = Z^{{\bf k}(p_1',\lambda')+\cdots+{\bf k}(p_n',\lambda')}$ (see e.g. \cite[Lem.3.20(A)]{Kim21}) which equals $Z^{{\bf k}'}$. Similarly, from \eqref{Q_k_minus_one_sum_zero} for $k=1$ we see that $Z^{{\bf t}^{(0)}} = Z^{{\bf k}(p_i,\lambda)}$, $i=1,\ldots,n$, mutually commute with each other, hence $$Z^{{\bf k}(p_1,\lambda)} \cdots Z^{{\bf k}(p_n,\lambda)} = Z^{{\bf k}(p_1,\lambda) + \cdots {\bf k}(p_n,\lambda)} = Z^{\bf k}.$$ Since $\Theta^\omega_{\lambda\lambda'}$ is a homomorphism, it follows that $\Theta^\omega_{\lambda\lambda'}(Z^{{\bf k}'}) = Z^{\bf k}$ holds, as desired. 
\end{proof}

We now assemble Lemmas \ref{lem.SS_core} and \ref{lem.Theta_on_normalizer_monomial}. We first perform a necessary preliminary computation. For $i,j\in \{1,\ldots,n\}$, define ${\bf t}_{ij} = ( (t_{ij})_v)_{v\in \mathcal{V}} \in\mathbb Z^{\mathcal{V}}$ as
\begin{align*}
    (t_{ij})_v=
\begin{cases}
    n & \mbox{if } v = \alpha_l\mbox{ for }1\leq l\leq i, \mbox{ or } v=\beta_m \mbox{ for }
    1\leq m\leq j,\\
    0 &\text{otherwise},
\end{cases} 
\end{align*}
keeping the identification $\mathcal{V} = V_\lambda = V_{\lambda'}$ in mind. 
Write ${\bf k} = (k_v)_{v\in V_\lambda}$ and ${\bf k}' = (k'_v)_{v\in V_{\lambda'}}$; recall ${\bf k}(p,\lambda)=(k_v(p))_{v\in V_\lambda}$ and ${\bf k}(p',\lambda') = (k_v(p'))_{v\in V_{\lambda'}}$. 
For each $p \in \mathcal{N}(ij)$, from \eqref{Q_r_sum_zero} it follows that 
$$
\sum_{v \in \mathcal{V}} \sum_{u \in V_\lambda} n Q_\lambda(v,u) k_v(p) k_u
= \sum_{v \in \{\alpha_l,\beta_m | 1\le l\le i, \, 1 \le m \le j\}} \, \sum_{u\in V_\lambda} Q_\lambda(v,u) (t_{ij})_v \, k_u =: m_{ij}.
$$
For each $p' \in \mathcal{N}'(ij)$, from \eqref{Q_k_minus_one_sum_zero} (for $k=1$) it follows that 
$$
\sum_{v \in \mathcal{V}} \sum_{u \in V_{\lambda'}} n Q_{\lambda'}(v,u) k_v(p') k'_u
= \sum_{v \in \{\alpha_l,\beta_m | 1\le l\le i, \, 1 \le m \le j\}} \, \sum_{u\in V_{\lambda'}} Q_{\lambda'}(v,u) (t'_{ij})_v \, k'_u =: m'_{ij}.
$$
Note that the integers $m_{ij}$ and $m'_{ij}$ do not depend on $p$ or $p'$, respectively. We will eventually be able to show that $m_{ij} = m'_{ij}$. At the moment, we have
\begin{align}
\label{tr_without_Weyl}
\overline{\rm tr}_{\lambda}(a_{ij}) & ~\underset{\mbox{\tiny Lemma \ref{lem.quantum_trace_as_sum_over_paths}(1)}}{=}~ \sum_{p \in \mathcal{N}(ij)} [X^{{\bf k}(p,\lambda)} Z^{{\bf k}}] 
= \omega^{-m_{ij}} \sum_{p \in \mathcal{N}(ij)}  X^{{\bf k}(p,\lambda)} Z^{{\bf k}}, \\
\label{tr_prime_without_Weyl}
\overline{\rm tr}_{\lambda'}(a_{ij}) & ~\underset{\mbox{\tiny Lemma \ref{lem.quantum_trace_as_sum_over_paths}(2)}}{=}~ \sum_{p' \in \mathcal{N}'(ij)} [X^{{\bf k}(p',\lambda')} Z^{{\bf k}'}] 
= \omega^{-m'_{ij}} \sum_{p' \in \mathcal{N}'(ij)}  X^{{\bf k}(p',\lambda')} Z^{{\bf k}'}.
\end{align}

Observe now that
\begin{align*}
    \Theta^\omega_{\lambda\lambda'}( \overline{\rm tr}_{\lambda'}(a_{ij})) & = 
    \Theta^\omega_{\lambda\lambda'} (\omega^{-m'_{ij}} \sum_{p' \in \mathcal{N}'(ij)} X^{{\bf k}(p',\lambda')} Z^{{\bf k}'}) \qquad (\because \mbox{\eqref{tr_prime_without_Weyl}}) \\
    & = \omega^{-m'_{ij}} \, \Phi^q_{\lambda\lambda'}(\sum_{p'\in \mathcal{N}'(ij)} X^{{\bf k}(p',\lambda')}) \, \Theta^\omega_{\lambda\lambda'}(Z^{{\bf k}'}) \qquad (\because \mbox{Remark \ref{rem-extension}}) \\
    & = \omega^{-m'_{ij}}  \sum_{p \in \mathcal{N}(ij)} X^{{\bf k}(p,\lambda)} \, Z^{{\bf k}} \qquad (\because \mbox{Lemmas \ref{lem.SS_core} and \ref{lem.Theta_on_normalizer_monomial}}) \\
    & = \omega^{m_{ij} - m'_{ij}} \, \overline{\rm tr}_\lambda(a_{ij}). \qquad (\because \mbox{\eqref{tr_without_Weyl}})
\end{align*}
From Lemma \ref{lem.quantum_trace_as_sum_over_paths}, both $\overline{\rm tr}_{\lambda'}(a_{ij}) \in \mathcal{Z}_\omega(\mathbb{P}_4,\lambda')$ and $\overline{\rm tr}_\lambda(a_{ij}) \in \mathcal{Z}_\omega(\mathbb{P}_4,\lambda)$ are sum of Weyl-ordered Laurent monomials, hence they are invariant under the $*$-maps. By Lemma \ref{lem:nu_omega_preserves_star} it follows that $\Theta^\omega_{\lambda\lambda'}$ preserves the $*$-structures, hence sends $*$-invariant elements to $*$-invariant elements. It then follows that $\omega^{m_{ij} - m'_{ij}}=1$. We thus obtain the sought-for compatibility $\Theta^\omega_{\lambda\lambda'}( \overline{\rm tr}_{\lambda'}(a_{ij})) = \overline{\rm tr}_\lambda(a_{ij})$ for the stated arcs $a_{ij}$.

The proofs for the remaining generators $b_{ij}$ and $c_{ij}$ are similar, and we omit them. In view of Lemma \ref{lem-satuared-rd}, the compatibility $\Theta^\omega_{\lambda\lambda'}(\overline{\rm tr}_{\lambda'}(u)) = \overline{\rm tr}_\lambda(u)$ holds for all elements $u \in \overline{\cS}_\omega(\mathbb{P}_4)$, hence Proposition \ref{prop-P4-compatibility_new} is proved. As explained in \S\ref{subsec.compatibility_Theta}, this finishes the proof of Theorem \ref{thm-main-compatibility}.

\subsection{The restriction of
 $\Theta^\omega_{\lambda \lambda'}$ to the balanced part}\label{subsec:restriction_of_Theta_to_balanced} Theorem \ref{thm-main-compatibility} states that the quantum trace maps $
 \overline{\rm tr}_\lambda$ and $
 \overline{\rm tr}_{\lambda'}$ for different triangulations are compatible with each other through the quantum coordinate change map $\Theta^\omega_{\lambda\lambda'} : {\rm Frac}(\mathcal{Z}_\omega^{\rm mbl}(\fS,\lambda')) \to {\rm Frac}(\mathcal{Z}_\omega^{\rm mbl}(\fS,\lambda))$ between the skew-fields of fractions of the mutable-balanced algebras. Meanwhile, in view of Theorem \ref{thm.quantum_trace}(a), the image of the quantum trace maps $
 \overline{\rm tr}_{\lambda'}$ and $
 \overline{\rm tr}_\lambda$ lie in ${\rm Frac}(\mathcal{Z}^{\rm bl}_\omega(\fS,\lambda'))$ and ${\rm Frac}(\mathcal{Z}^{\rm bl}_\omega(\fS,\lambda))$, the skew-fields of fractions of the balanced algebras. In the present subsection we show that $\Theta^\omega_{\lambda\lambda'}$ restricts to the map between these balanced fraction skew-fields.

\begin{proposition}\label{prop.Theta_restricts_to_balanced}
    Let $\fS$ be a triangulable pb surface, and $\lambda$ and $\lambda'$ be triangulations of $\fS$. The quantum coordinate change isomorphism $\Theta^\omega_{\lambda\lambda'}:{\rm Frac}(\mathcal{Z}_\omega^{\rm mbl}(\fS,\lambda')) \to {\rm Frac}(\mathcal{Z}_\omega^{\rm mbl}(\fS,\lambda))$ established in \S\ref{subsec:coordinate_change_isomorphisms_for_change_of_triangulations} restricts to an isomorphism
    $$
    (\Theta^{\omega}_{\lambda\lambda'})^{\rm bl} \colon {\rm Frac}(\mathcal{Z}_\omega^{\rm bl}(\fS,\lambda')) \to {\rm Frac}(\mathcal{Z}_\omega^{\rm bl}(\fS,\lambda)).
    $$
\end{proposition}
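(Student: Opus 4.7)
The plan is to reduce, via the consistency of Proposition \ref{prop:Theta_omega_consistency} and the cutting compatibility of Lemma \ref{lem-mutation-cutiing}, to the case $\fS = \mathbb{P}_4$, and then to verify the restriction there by exploiting the naturality of the quantum trace from Theorem \ref{thm-main-compatibility}. Since $\Theta^\omega_{\lambda\lambda'}$ has inverse $\Theta^\omega_{\lambda'\lambda}$, by symmetry it suffices to prove one containment of fraction skew-fields; and since any two triangulations are connected by a sequence of flips, it suffices to handle the case where $\lambda'$ is obtained from $\lambda$ by a single flip at an edge $e_1$.

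Let $E$ be the set of non-boundary edges of $\lambda$ other than $e_1$, so that $\cut_E(\fS) = \mathbb{P}_4 \sqcup \mathbb{P}_3^{\sqcup m}$ for some $m \ge 0$. The splitting map $\mathcal{S}_E$ restricts to an injection between balanced subalgebras, since balancedness is a triangle-by-triangle condition unchanged by cutting along non-boundary edges. Iterating Lemma \ref{lem-mutation-cutiing} shows that $\mathcal{S}_E$ intertwines $\Theta^\omega_{\lambda\lambda'}$ with $\Theta^\omega_{\lambda_E\lambda'_E}$ at the mutable-balanced fraction level. Combined with the triviality of $\Theta^\omega$ on each $\mathbb{P}_3$ component (where the induced triangulations coincide), and with the observation that one can descend balancedness through $\mathcal{S}_E$ using that the $\mathcal{S}_E$-image of the balanced subalgebra equals the intersection of $\mathcal{Z}^{\rm bl}_\omega(\cut_E(\fS),\lambda_E)$ with the $\mathcal{S}_E$-image of $\mathcal{Z}_\omega(\fS,\lambda)$, this reduces the problem to the single $\mathbb{P}_4$ component.

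For $\fS = \mathbb{P}_4$ with $\lambda,\lambda'$ as in Figure \ref{P4_two_triangulations}, let $B_{\lambda'}$ denote the subalgebra of $\mathcal{Z}^{\rm bl}_\omega(\mathbb{P}_4,\lambda')$ generated by $\mathcal{X}_q(\mathbb{P}_4,\lambda')$ and by $\overline{\rm tr}_{\lambda'}(\rdP)$, and define $B_\lambda$ analogously. Theorem \ref{thm-main-compatibility} and Remark \ref{rem-extension} give $\Theta^\omega_{\lambda\lambda'}(B_{\lambda'}) \subset B_\lambda \subset \mathcal{Z}^{\rm bl}_\omega(\mathbb{P}_4,\lambda)$, so it suffices to prove the equality ${\rm Frac}(B_{\lambda'}) = {\rm Frac}(\mathcal{Z}^{\rm bl}_\omega(\mathbb{P}_4,\lambda'))$. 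Single balanced monomials can be extracted from $B_{\lambda'}$ via Lemma \ref{lem.quantum_trace_as_sum_over_paths}(2): since $\mathcal{N}'(ii) = \{p_i'\}$, the value $\overline{\rm tr}_{\lambda'}(a_{ii})$ is the single Weyl-ordered monomial $[X^{{\bf k}(p_i',\lambda')} Z^{{\bf k}'}]$, and multiplying by $X^{-{\bf k}(p_i',\lambda')} \in \mathcal{X}_q(\mathbb{P}_4,\lambda')$ exhibits $Z^{{\bf k}'}$ inside ${\rm Frac}(B_{\lambda'})$. Analogous extractions from $\overline{\rm tr}_{\lambda'}(b_{ii})$ and $\overline{\rm tr}_{\lambda'}(c_{ii})$ and, if needed, from the off-diagonal corner arcs $a_{ij},b_{ij},c_{ij}$ via leading-term arguments on the path-sum formulas, combined with the generation of $\rdP$ by corner arcs from Lemma \ref{lem-satuared-rd}, yield enough balanced monomials.

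The hard part will be the final combinatorial verification: showing that the balanced monomials so extracted, together with $\mathcal{X}_q(\mathbb{P}_4,\lambda')$, span $\mathcal{B}_{\lambda'}$ modulo $n\mathbb{Z}^{V_{\lambda'}}$ as a $\mathbb{Z}$-module. This is a finite-rank check in the spirit of the generation results for $\mathbb{P}_3$ in \cite{LY23}, requiring one to track which balanced vectors arise as exponents of Weyl-ordered monomials in the path-sum formulas for the various corner arcs.
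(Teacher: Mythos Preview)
Your overall architecture matches the paper's: reduce to a single flip by consistency, cut along all other interior edges, and then handle the pieces. The descent step you sketch---that a monomial in $\mathcal{Z}_\omega(\fS,\lambda)$ is balanced iff its $\mathcal{S}_E$-image is balanced---is correct at the monomial level and is used in the paper as well. However, to apply it you still need to know that $\Theta^\omega_{\lambda\lambda'}(Z^{{\bf t}'})$ has the form $Z^{{\bf t}^{(0)}}\cdot(\text{element of }{\rm Frac}(\mathcal{X}_q))$, which the paper extracts from the factorization $\nu^\omega_{v_k}=\nu^{\sharp\omega}_{v_k}\circ\nu'_{v_k}$; your writeup glosses over this, so the descent argument as stated operates at the wrong level (fraction field rather than single monomials).

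The substantive gap is in the $\mathbb{P}_4$ step. You propose to show ${\rm Frac}(B_{\lambda'})={\rm Frac}(\mathcal{Z}^{\rm bl}_\omega(\mathbb{P}_4,\lambda'))$ by extracting balanced monomials from path-sum values of corner arcs and then doing a ``finite-rank check'' that these span $\mathcal{B}_{\lambda'}$ modulo $n\mathbb{Z}^{V_{\lambda'}}$. You acknowledge this is the hard part and leave it unfinished. The paper avoids this entirely by invoking Lemma~\ref{lem.quantum_trace_on_fractions} (a result of L\^e--Yu, \cite[Theorem~11.7 and \S14.2]{LY23}): for any polygon $\mathbb{P}_k$ and any triangulation $\lambda$, the quantum trace induces a skew-field isomorphism ${\rm Frac}(\overline{\cS}_\omega(\mathbb{P}_k))\xrightarrow{\sim}{\rm Frac}(\mathcal{Z}^{\rm bl}_\omega(\mathbb{P}_k,\lambda))$. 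Once you have this, Theorem~\ref{thm-main-compatibility} gives $\Theta^\omega_{\lambda\lambda'}({\rm Frac}(\mathcal{Z}^{\rm bl}_\omega(\mathbb{P}_4,\lambda')))={\rm Frac}(\mathcal{Z}^{\rm bl}_\omega(\mathbb{P}_4,\lambda))$ in one line, with no combinatorics. Your proposed verification would essentially be reproving (a weak form of) this L\^e--Yu result by hand; citing it instead closes the argument immediately.
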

Since $\Theta^\omega_{\lambda'\lambda} : {\rm Frac}(\mathcal{Z}^{\rm mbl}_\omega(\fS,\lambda)) \to {\rm Frac}(\mathcal{Z}^{\rm mbl}_\omega(\fS,\lambda'))$ is the inverse of $\Theta^\omega_{\lambda\lambda'}$ (which follows from \eqref{Theta_omega_lambda_lambda_prime_consistency}, $\Theta^\omega_{\lambda\lambda}={\rm Id}$ and $\Theta^\omega_{\lambda'\lambda'}={\rm Id}$), it suffices to show that $\Theta^\omega_{\lambda\lambda'}$ does restrict to a homomorphism ${\rm Frac}(\mathcal{Z}_\omega^{\rm bl}(\fS,\lambda')) \to {\rm Frac}(\mathcal{Z}_\omega^{\rm bl}(\fS,\lambda))$.

Our strategy to show this is to use the splitting homomorphisms $\mathcal{S}_e$ for the skew-fields of fractions of the mutable-balanced algebras, as in \eqref{mathcal_S_e_for_mbl}. We first show:
\begin{lemma}\label{lem:cutting_homomorphism_for_mbl_FG_algebra_restritcs_to_bl}
    Let $\fS$ be a triangulable pb surface, and $\lambda$ be a triangulation of $\fS$. Let $e$ be an edge of $\lambda$ that is not a boundary edge. Let $\mathsf{Cut}_e(\fS)$ be the pb surface obtained from $\fS$ by cutting along $e$, and let $\lambda_e$ be the triangulation of $\mathsf{Cut}_e(\fS)$ induced by $\lambda$. Then, the splitting homomorphism $\mathcal{S}_e : {\rm Frac}(\mathcal{Z}^{\rm mbl}_\omega(\fS,\lambda)) \to {\rm Frac}(\mathcal{Z}^{\rm mbl}_\omega(\mathsf{Cut}_e(\fS),\lambda_e))$ in \eqref{mathcal_S_e_for_mbl} restricts to the homomorphism
    $$
    \mathcal{S}_e^{\rm bl}\colon {\rm Frac}(\mathcal{Z}^{\rm bl}_\omega(\fS,\lambda)) \to {\rm Frac}(\mathcal{Z}^{\rm bl}_\omega(\mathsf{Cut}_e(\fS),\lambda_e)).
    $$
\end{lemma}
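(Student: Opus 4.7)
The plan is to verify that $\mathcal{S}_e$ preserves balancedness at the level of Weyl-ordered Laurent monomials, after which the desired restriction to skew-fields of fractions will follow by injectivity.

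The key observation is that on Weyl-ordered monomials, $\mathcal{S}_e$ acts as $\mathcal{S}_e(Z^{\bf k}) = Z^{{\bf k}'}$, where ${\bf k}' = \pr_e^*{\bf k}$ is the vector in $\mathbb{Z}^{V_{\lambda_e}}$ defined by $k'_v = k_{\pr_e(v)}$. Since $\lambda$ contains no self-folded triangles, the generators $Z_{v'}$ and $Z_{v''}$ appearing in $\mathcal{S}_e(Z_v)$ for $v$ lying on $e$ commute, so the equality above holds up to at worst a half-integer power of $\omega$; the same reasoning employed in the proof of Lemma \ref{lem.quantum_trace_as_sum_over_paths} (showing $\mathcal{S}_e$ sends Weyl-ordered monomials to Weyl-ordered monomials) confirms that the scalar is in fact $1$.

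Next, I would show that ${\bf k}' \in \mathcal{B}_{\lambda_e}$ whenever ${\bf k} \in \mathcal{B}_\lambda$, by a face-by-face argument. Cutting along a single non-boundary ideal arc $e$ leaves the triangular face set intact: there is a canonical bijection $\mathbb{F}_\lambda \leftrightarrow \mathbb{F}_{\lambda_e}$, say $\tau \leftrightarrow \tau_e$, for which the characteristic maps can be chosen to satisfy $\pr_e \circ f_{\tau_e} = f_\tau$. Consequently, for every $w \in V_{\mathbb{P}_3}$,
$$
f_{\tau_e}^* {\bf k}'(w) \;=\; k'_{f_{\tau_e}(w)} \;=\; k_{\pr_e(f_{\tau_e}(w))} \;=\; k_{f_\tau(w)} \;=\; f_\tau^* {\bf k}(w),
$$
hence $f_{\tau_e}^*{\bf k}' = f_\tau^*{\bf k} \in \mathcal{B}_{\mathbb{P}_3}$ for every face. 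Therefore ${\bf k}'$ is balanced, and we obtain $\mathcal{S}_e(\mathcal{Z}^{\rm bl}_\omega(\fS,\lambda)) \subset \mathcal{Z}^{\rm bl}_\omega(\cut_e(\fS),\lambda_e)$ as subalgebras.

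Finally, to pass to skew-fields of fractions, observe that the restricted map is injective, being the restriction of the injective embedding $\mathcal{S}_e$ on the ambient $n$-th root Fock-Goncharov algebras. As both $\mathcal{Z}^{\rm bl}_\omega(\fS,\lambda)$ and $\mathcal{Z}^{\rm bl}_\omega(\cut_e(\fS),\lambda_e)$ are quantum torus algebras associated to the finitely generated free $\mathbb{Z}$-modules $\mathcal{B}_\lambda$ and $\mathcal{B}_{\lambda_e}$ (as already noted in the paper), this injection extends uniquely to a skew-field homomorphism $\mathcal{S}_e^{\rm bl}$ as claimed. The only step requiring genuine care is pinning down the face correspondence between $\lambda$ and $\lambda_e$ and the compatibility $\pr_e \circ f_{\tau_e} = f_\tau$ of the characteristic maps; once this bookkeeping is set up, the balancedness check is essentially immediate from the definition of $\mathcal{B}_\lambda$.
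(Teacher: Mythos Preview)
Your proposal is correct and follows essentially the same approach as the paper: both arguments use that $\mathcal{S}_e(Z^{\bf k}) = Z^{\pr_e^*{\bf k}}$ on Weyl-ordered monomials and then verify balancedness face-by-face via the canonical bijection $\mathbb{F}_\lambda \leftrightarrow \mathbb{F}_{\lambda_e}$. You are slightly more explicit than the paper about the passage to skew-fields of fractions, but the substance is the same.
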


\begin{proof}
    Let $Z^{\bf t} \in \mathcal{Z}^{\rm bl}_\omega(\fS,\lambda) \subset \mathcal{Z}^{\rm mbl}_\omega(\fS,\lambda) \subset \mathcal{Z}_\omega(\fS,\lambda)$, with ${\bf t} = (t_v)_{v\in V_\lambda} \in \mathcal{B}_\lambda$, where $\mathcal{B}_\lambda \subset \mathbb{Z}^{V_\lambda}$ is as defined in \eqref{B_lambda}. From \eqref{mathcal_S_e_on_Laurent_monomial} we have $\mathcal{S}_e(Z^{\bf t}) = Z^{{\bf t}'}$, with ${\bf t}' = (t'_v)_{v\in V_{\lambda_e}} \in \mathbb{Z}^{V_{\lambda_e}}$ given by the equation $t'_v = t_{{\bf pr}_e(v)}$, where ${\bf pr}_e$ stands for the gluing projection map $\mathsf{Cut}_e(\fS) \to \fS$ and also the induced map $V_{\lambda_e} \to V_\lambda$. Thus, for each ideal triangle $\tau$ of $\lambda$, if we denote by $\tau'$ the corresponding unique ideal triangle of $\lambda_e$, then ${\bf pr}_e$ induces a bijection from $V_{\lambda_e} \cap \tau'$ to $V_\lambda \cap \tau$, and we have $t'_v = t_{{\bf pr}_e(v)}$ for all $v \in V_{\lambda_e}\cap \tau'$. In view of \eqref{B_lambda}, the balancedness condition is described for each ideal triangle, and hence one can observe that the balancedness of ${\bf t}$ (with respect to $\lambda$) implies that of ${\bf t}'$ (with respect to $\lambda_e$). So $\mathcal{S}_e(Z^{\bf t}) = Z^{{\bf t}'} \in \mathcal{Z}^{\rm mbl}_\omega(\mathsf{Cut}_e(\fS),\lambda_e)$ lies in $\mathcal{Z}^{\rm bl}_\omega(\mathsf{Cut}_e(\fS),\lambda_e)$.
\end{proof}

By using the splitting homomorphisms, we will see that proving Proposition \ref{prop.Theta_restricts_to_balanced} boils down to the case when $\fS$ is a disjoint union of polygons. To settle the polygon case, we invoke the following lemma:
\begin{lemma}\cite[Theorem 11.7 and section 14.2]{LY23}\label{lem.quantum_trace_on_fractions}
    Suppose that the pb surface $\fS$ is a polygon $\mathbb{P}_k$ with $k\ge 3$, and $\lambda$ is a triangulation of $\fS$. Then the quantum trace map $
    \overline{\rm tr}_\lambda : \overline{\cS}_\omega(\fS) \to \mathcal{Z}^{\rm bl}_\omega(\fS,\lambda)$ induces an isomorphism
    \begin{align*}
    {\rm Frac}(
    \overline{\rm tr}_\lambda)\colon {\rm Frac}(\overline{\cS}_\omega(\fS)) \to {\rm Frac}(\mathcal{Z}^{\rm bl}_\omega(\fS,\lambda))
\end{align*}
between the corresponding skew-fields of fractions.
\end{lemma}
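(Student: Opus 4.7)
The plan is to establish the lemma by separately verifying injectivity and surjectivity of the induced map ${\rm Frac}(\overline{\rm tr}_\lambda)$. Injectivity is nearly immediate: by Theorem \ref{thm.quantum_trace}(c), $\overline{\rm tr}_\lambda$ is injective on polygons, so $\overline{\cS}_\omega(\mathbb{P}_k)$ embeds into the quantum torus algebra $\mathcal{Z}^{\rm bl}_\omega(\mathbb{P}_k,\lambda)$ and is therefore an Ore domain. Its skew-field of fractions exists, and the algebra injection extends uniquely to an injection between skew-fields of fractions.

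For surjectivity, the strategy is induction on $k$, the number of vertices of the polygon. The base case $k = 3$ is the triangle $\mathbb{P}_3$, which supports a unique triangulation. Using the sum-over-paths formula for the quantum trace of stated arcs (Lemma \ref{lem.quantum_trace_as_sum_over_paths}, generalizing \cite[Theorem 10.5]{LY23}), each diagonal stated corner arc $a_{ii}$ (whose endpoints are both labeled $i$) maps to a single Weyl-ordered balanced monomial, because the network $\mathcal{N}(ii)$ contains exactly one path. Similarly for the corner arcs at the other two corners of the triangle, and for the loops around punctures whose traces recover $X_v = Z_v^n$. By taking products and ratios of these images, one recovers all monomials $Z^{{\bf k}_j}$ for $j = 1,2,3$, where ${\bf k}_1, {\bf k}_2, {\bf k}_3$ are the barycentric coordinate functions from \eqref{def:proj}. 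Since these together with $(n\mathbb{Z})^{V_{\mathbb{P}_3}}$ generate $\mathcal{B}_{\mathbb{P}_3}$ by definition, the image of $\overline{\rm tr}_\lambda$ generates the entire skew-field ${\rm Frac}(\mathcal{Z}^{\rm bl}_\omega(\mathbb{P}_3,\lambda))$ after localization.

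For the inductive step with $k \geq 4$, choose an internal edge $e$ of $\lambda$; cutting along $e$ decomposes $\mathbb{P}_k$ into two smaller polygons $\mathbb{P}_{k_1}$ and $\mathbb{P}_{k_2}$ with $k_1, k_2 < k$ and both at least $3$. By Theorem \ref{thm-trace-cut} and Lemma \ref{lem:cutting_homomorphism_for_mbl_FG_algebra_restritcs_to_bl}, the splitting homomorphisms $\mathbb{S}_e$ and $\mathcal{S}_e^{\rm bl}$ fit into a commutative square with the two quantum trace maps, and the inductive hypothesis gives that the quantum traces for each $\mathbb{P}_{k_i}$ induce fraction-field isomorphisms. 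Combining these with the injectivity and explicit description of the image of $\mathcal{S}_e^{\rm bl}$, one extracts generators of ${\rm Frac}(\mathcal{Z}^{\rm bl}_\omega(\mathbb{P}_k,\lambda))$ as fractions of elements in the image of $\overline{\rm tr}_\lambda$.

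The main obstacle is the base case for the triangle: one must verify that the network-monomial images for varying states and corners do suffice to generate $\mathcal{B}_{\mathbb{P}_3}$ modulo $n\mathbb{Z}$, which requires a careful bookkeeping of the exponent vectors ${\bf k}(p_i,\lambda)$ and of $\bf k$ itself. A secondary obstacle in the induction is identifying the image of $\mathcal{S}_e^{\rm bl}$ precisely, which amounts to recognizing the balanced subalgebra of the uncut surface as the equalizer of the two obvious restriction maps associated with the two copies $e', e''$ of $e$. An alternative route that bypasses the induction would be to run the network argument directly for all polygons by invoking the $\mathbb{P}_4$ computation established in Proposition \ref{prop-P4-compatibility_new} as a model and combining it with a transcendence-degree count, but this still relies on the base case of $\mathbb{P}_3$.
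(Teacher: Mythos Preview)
The paper does not give its own proof of this lemma; it is cited from \cite[Theorem 11.7 and \S14.2]{LY23} and used as a black box in \S\ref{subsec:restriction_of_Theta_to_balanced}. So there is no in-paper argument to compare against, and I can only evaluate your proposal on its own merits.

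Your injectivity step is fine: Theorem \ref{thm.quantum_trace}(c) gives injectivity of $\overline{\rm tr}_\lambda$ on polygons, the target is a quantum torus hence Ore, and the source inherits the Ore property as a subring; the induced map on skew-fields is then injective.

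Your surjectivity argument, however, has two genuine gaps. In the base case, the sentence about ``loops around punctures whose traces recover $X_v = Z_v^n$'' is wrong for $\mathbb{P}_3$: there are no interior punctures, hence no such loops. What you actually need is that the exponent vectors of the monomial images of the diagonal corner arcs $a_{ii}$, $b_{ii}$, $c_{ii}$ (three corners, $n$ values of $i$ each) generate the full balanced lattice $\mathcal{B}_{\mathbb{P}_3}$ --- a concrete lattice computation you have not carried out, and which is not entirely trivial since the $3n$ vectors sit inside a lattice of rank $|V_{\mathbb{P}_3}| = \binom{n+1}{2}-1$.

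The inductive step is more seriously broken. Knowing the lemma for the two pieces gives you that the tensor-product quantum trace ${\rm Frac}(\overline{\cS}_\omega(\mathbb{P}_{k_1}) \otimes \overline{\cS}_\omega(\mathbb{P}_{k_2})) \to {\rm Frac}(\mathcal{Z}^{\rm bl}_\omega \otimes \mathcal{Z}^{\rm bl}_\omega)$ is an isomorphism. But both $\mathbb{S}_e$ and $\mathcal{S}_e^{\rm bl}$ are proper embeddings, and what you need is that the isomorphism carries the image of $\mathbb{S}_e$ \emph{onto} the image of $\mathcal{S}_e^{\rm bl}$. The commutative square from Theorem \ref{thm-trace-cut} only gives the forward inclusion. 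Chasing the other inclusion through the diagram, using injectivity of $\mathcal{S}_e$, reduces exactly to the statement that $\overline{\rm tr}_\lambda$ is surjective on fraction fields for $\mathbb{P}_k$ --- which is what you are trying to prove. Your ``equalizer'' remark correctly describes the torus-side image, but unless you independently prove that the skein-side image of $\mathbb{S}_e$ is (the preimage under quantum trace of) that same equalizer, the induction is vacuous. A cleaner route, closer to what \cite{LY23} does, is to abandon the induction and argue directly for every polygon that each balanced monomial is, up to scalar, the image of a product of diagonal corner arcs; this uses the matrix $\overline{\mathsf{K}}_\lambda$ alluded to in Remark \ref{rem.k_balancedness_condition}.
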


\begin{proposition}\label{prop:Theta_restricts_to_bl_polygons}
    Proposition \ref{prop.Theta_restricts_to_balanced} holds when $\fS$ is a disjoint union of polygons.
\end{proposition}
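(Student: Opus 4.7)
The plan is to reduce to the case of a single polygon $\mathbb{P}_k$ with $k \ge 3$, and then to combine Theorem \ref{thm-main-compatibility} with the fact, recorded in Lemma \ref{lem.quantum_trace_on_fractions}, that the quantum trace itself becomes a skew-field isomorphism on polygons.

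For the reduction, I would write $\fS = \bigsqcup_i \mathbb{P}_{k_i}$ and decompose each triangulation as $\lambda = \bigsqcup_i \lambda_i$. Because the balancedness condition \eqref{B_lambda} is imposed face by face, the natural identifications $V_\lambda = \bigsqcup_i V_{\lambda_i}$ and $\Gamma_\lambda = \bigsqcup_i \Gamma_{\lambda_i}$ yield $\mathcal{B}_\lambda \cong \bigoplus_i \mathcal{B}_{\lambda_i}$, and therefore tensor product decompositions $\mathcal{Z}^{\rm bl}_\omega(\fS,\lambda) \cong \bigotimes_i \mathcal{Z}^{\rm bl}_\omega(\mathbb{P}_{k_i},\lambda_i)$ and analogously for $\mathcal{Z}^{\rm mbl}_\omega$. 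Each flip of a triangulation of $\fS$ is supported inside exactly one component, so the Fock-Goncharov-Shen mutation sequence realizing $\lambda \leadsto \lambda'$ splits into independent sequences on the components, and $\Theta^{\omega}_{\lambda\lambda'}$ factors accordingly. This reduces the problem to a single polygon.

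For $\fS = \mathbb{P}_k$ with $k \ge 3$, I would first extend Theorem \ref{thm-main-compatibility} to the skew-fields of fractions. By Theorem \ref{thm.quantum_trace}(c), $\overline{\rm tr}_\lambda$ and $\overline{\rm tr}_{\lambda'}$ are injective on polygons, so they induce ring homomorphisms ${\rm Frac}(\overline{\rm tr}_\lambda)$ and ${\rm Frac}(\overline{\rm tr}_{\lambda'})$ between the corresponding fraction skew-fields, and the compatibility equation of Theorem \ref{thm-main-compatibility} upgrades to ${\rm Frac}(\overline{\rm tr}_\lambda) = \Theta^{\omega}_{\lambda\lambda'} \circ {\rm Frac}(\overline{\rm tr}_{\lambda'})$ viewed as maps into ${\rm Frac}(\mathcal{Z}^{\rm mbl}_\omega(\mathbb{P}_k,\lambda))$. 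Given any $x \in {\rm Frac}(\mathcal{Z}^{\rm bl}_\omega(\mathbb{P}_k,\lambda'))$, Lemma \ref{lem.quantum_trace_on_fractions} produces $y \in {\rm Frac}(\overline{\cS}_\omega(\mathbb{P}_k))$ with ${\rm Frac}(\overline{\rm tr}_{\lambda'})(y) = x$, whence $\Theta^{\omega}_{\lambda\lambda'}(x) = {\rm Frac}(\overline{\rm tr}_\lambda)(y) \in {\rm Frac}(\mathcal{Z}^{\rm bl}_\omega(\mathbb{P}_k,\lambda))$ because $\overline{\rm tr}_\lambda$ takes values in $\mathcal{Z}^{\rm bl}_\omega(\mathbb{P}_k,\lambda)$. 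Applying the identical argument to the inverse $\Theta^{\omega}_{\lambda'\lambda}$ then upgrades the restriction to a skew-field isomorphism.

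No serious obstacle is anticipated. The reduction to a single polygon is purely formal, coming from the locality of flips and the face-by-face nature of the balancedness condition, and the single-polygon case is essentially a diagram chase after invoking Lemma \ref{lem.quantum_trace_on_fractions}. The one point deserving explicit verification is that Theorem \ref{thm-main-compatibility} descends cleanly to the fraction fields, which is immediate from the injectivity provided by Theorem \ref{thm.quantum_trace}(c) for polygons.
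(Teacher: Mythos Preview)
Your proposal is correct and follows essentially the same approach as the paper: reduce to a single polygon, then combine Theorem \ref{thm-main-compatibility} with Lemma \ref{lem.quantum_trace_on_fractions} to conclude that $\Theta^\omega_{\lambda\lambda'}$ carries ${\rm Frac}(\mathcal{Z}^{\rm bl}_\omega(\fS,\lambda'))$ into ${\rm Frac}(\mathcal{Z}^{\rm bl}_\omega(\fS,\lambda))$. The paper's proof is much terser (it simply says the reduction is ``straightforward'' and then observes that Theorem \ref{thm-main-compatibility} gives $\Theta^\omega_{\lambda\lambda'}({\rm Im}(\overline{\rm tr}_{\lambda'})) \subset {\rm Im}(\overline{\rm tr}_\lambda)$, whence Lemma \ref{lem.quantum_trace_on_fractions} finishes), but your added detail on the tensor decomposition and on passing to fraction fields is entirely compatible with it.
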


\begin{proof}
    It is straightforward to see that it suffices to show the statement when $\fS$ is a (single) polygon $\mathbb{P}_k$, with $k\ge 3$. Our Theorem \ref{thm-main-compatibility} yields $\Theta^\omega_{\lambda\lambda'} ({\rm Im}(\overline{\rm tr}_{\lambda'})) \subset {\rm Im}(\overline{\rm tr}_\lambda)$. So Lemma \ref{lem.quantum_trace_on_fractions} implies that $\Theta^\omega_{\lambda\lambda'}$ restricts to a homomorphism ${\rm Frac}(\mathcal{Z}^{\rm bl}_\omega(\fS,\lambda')) \to {\rm Frac}(\mathcal{Z}^{\rm bl}_\omega(\fS,\lambda))$.
\end{proof}

We are now ready to prove Proposition \ref{prop.Theta_restricts_to_balanced}.

\begin{proof}[Proof of Proposition \ref{prop.Theta_restricts_to_balanced}]
By the consistency \eqref{Theta_omega_lambda_lambda_prime_consistency}, it suffices to prove the case when $\lambda$ and $\lambda'$ are related by a flip at an edge, say $e$. As in the proof of Theorem \ref{thm-main-compatibility}, we cut along all edges in $
E$ defined as in \eqref{lambda_0}, i.e. along all non-boundary edges of $\lambda$ not equal to $e$. The resulting pb surface is denoted by $
\fS_E:=\mathsf{Cut}_E(\fS)$, which is a disjoint union of a single copy of $\mathbb{P}_4$ which contains $e$ and some number of copies of triangles $\mathbb{P}_3$. Let $
\lambda_E$ and $
\lambda'_E$ be the triangulations of $
\fS_E$ induced by $\lambda$ and $\lambda'$. Consider the following diagram
\begin{align}
    \label{compatibility_diagram_for_mbl_and_bl}
    \hspace*{-7mm} \xymatrix@C-1,5mm{
    {\rm Frac}(\mathcal{Z}^{\rm bl}_\omega(\fS,\lambda')) \ar[rd]^{\iota} \ar[ddd]^{
    \mathcal{S}_E^{\rm bl}} \ar@{-->}[rrr]^-{\exists 
    (\Theta^\omega_{\lambda\lambda'})^{\rm bl}?} & & & {\rm Frac}(\mathcal{Z}^{\rm bl}_\omega(\fS,\lambda)) \ar[ld]_{\iota} \ar[ddd]^{
    \mathcal{S}^{\rm bl}_E} \\
    & {\rm Frac}(\mathcal{Z}^{\rm mbl}_\omega(\fS,\lambda')) \ar[r]^-{\Theta^\omega_{\lambda\lambda'}} \ar[d]^{\mathcal{S}_{
    E}} & {\rm Frac}(\mathcal{Z}^{\rm mbl}_\omega(\fS,\lambda)) \ar[d]^{\mathcal{S}_{
    E}} & \\
    & {\rm Frac}(\mathcal{Z}^{\rm mbl}_\omega(
    \fS_E,
    \lambda'_E)) \ar[r]^-{\Theta^\omega_{
    \lambda_E \lambda'_E}} & {\rm Frac}(\mathcal{Z}^{\rm mbl}_\omega(
    \fS_E,
    \lambda_E)) & \\
    {\rm Frac}(\mathcal{Z}^{\rm bl}_\omega(
    \fS_E,
    \lambda'_E)) \ar[ur]_{\iota} \ar[rrr]^-{
    (\Theta^\omega_{
    \lambda_E \lambda'_E})^{\rm bl}} & & & {\rm Frac}(\mathcal{Z}^{\rm bl}_\omega(
    \fS_E,
    \lambda_E)) \ar[ul]^{\iota}
    }
\end{align}
The inner square of the above diagram is the outermost square of the diagram in \eqref{eq-cutting-maps}, hence is commutative. The four diagonal arrows denoted by $\iota$ are the natural embeddings. So the left and the right trapezoids are commutative, by Lemma \ref{lem:cutting_homomorphism_for_mbl_FG_algebra_restritcs_to_bl}. The bottom horizontal map comes from Proposition \ref{prop:Theta_restricts_to_bl_polygons}; so the bottom trapezoid commutes. The goal is to show that there exists the top horizontal map $
(\Theta^\omega_{\lambda\lambda'})^{\rm bl}$ that makes the top trapezoid commutes; since $\iota$ are injective, such a map $
(\Theta^\omega_{\lambda\lambda'})^{\rm bl}$ is unique if it exists. From now on, we may omit writing $\iota$ if it's clear from the context.

Let $Z^{{\bf t}'} \in \mathcal{Z}^{\rm bl}_\omega(\fS,\lambda') \subset {\rm Frac}(\mathcal{Z}^{\rm bl}_\omega(\fS,\lambda'))$, belonging to the upper left corner of the diagram; that is, ${\bf t}' = (t'_v)_{v\in V_{\lambda'}} \in \mathcal{B}_{\lambda'} \subset \mathbb{Z}^{V_{\lambda'}}$, where $\mathcal{B}_{\lambda'}$ is defined in \eqref{B_lambda}. As $Z^{{\bf t}'} = \iota(Z^{{\bf t}'}) \in {\rm Frac}(\mathcal{Z}^{\rm mbl}_\omega(\fS,\lambda'))$, we may apply $\Theta^\omega_{\lambda\lambda'}$ and get an element $\Theta^\omega_{\lambda\lambda'}(Z^{{\bf t}'}) \in {\rm Frac}(\mathcal{Z}^{\rm mbl}_\omega(\fS,\lambda))$. All we need to check is whether $\Theta^\omega_{\lambda\lambda'}(Z^{{\bf t}'})$ lies in ${\rm Frac}(\mathcal{Z}^{\rm bl}_\omega(\fS,\lambda))$.

Recall from \eqref{two_seeds_connected_by_sequence_of_mutations} again that the cluster seeds $\mathcal{D}_\lambda$ and $\mathcal{D}_{\lambda'}$ are connected by the sequence of mutations at the vertices $v_1,\ldots,v_r$. As in the proof of Lemma \ref{lem.Theta_on_normalizer_monomial} (also Lemma \ref{lem-mutation-cutiing}), let $\mathcal{D}_0 = \mathcal{D}_\lambda$, and $\mathcal{D}_k = \mu_{v_k}(\mathcal{D}_{k-1})$ for $k=1,\ldots,r$. In particular, $\mathcal{D}_r = \mathcal{D}_{\lambda'}.$ Let ${\bf t}^{(r)} := {\bf t}'$, and define ${\bf t}^{(k)}$, $k=r-1,\ldots,2,1$, recursively by the equation $\nu'_{v_k}(Z^{{\bf t}^{(k)}}) = Z^{{\bf t}^{(k-1)}}$; see \eqref{nu_prime_v_k_on_exponents} for the formula of ${\bf t}^{(k-1)}$ in terms of ${\bf t}^{(k)}$. Note that $\nu^\omega_{v_k}(Z^{{\bf t}^{(k)}}) = \nu^{\sharp \omega}_{v_k}(\nu'_{v_k}(Z^{{\bf t}^{(k)}})) = \nu^{\sharp \omega}_{v_k}(Z^{{\bf t}^{(k-1)}}) = Z^{{\bf t}^{(k-1)}}  \cdot (*)_{k-1}$ for some nonzero element $(*)_{k-1}$ of ${\rm Frac}(\mathcal{X}_q(\mathcal{D}_{k-1}))$, due to Lemma \ref{lem.nu_sharp_well-defined}. Note also that the isomorphism $\nu^\omega_{v_k} : {\rm Frac}(\mathcal{Z}^{\rm mbl}_\omega(\mathcal{D}_k)) \to {\rm Frac}(\mathcal{Z}^{\rm mbl}_\omega(\mathcal{D}_{k-1}))$ restricts to an isomorphism $\Phi^q_{v_k} : {\rm Frac}(\mathcal{X}_q(\mathcal{D}_k)) \to {\rm Frac}(\mathcal{X}_q(\mathcal{D}_{k-1}))$ (Lemma \ref{lem:nu_extends_mu}). Thus we arrive at
$$
\Theta^\omega_{\lambda\lambda'}(Z^{{\bf t}^{(r)}}) = \nu^\omega_{v_r} \cdots \nu^\omega_{v_1}(Z^{{\bf t}^{(r)}}) =
\nu^\omega_{v_r} \cdots \nu^\omega_{v_2}(Z^{{\bf t}^{(r-1)}} \cdot (*)_{r-1}) = \cdots = Z^{{\bf t}^{(0)}} \cdot (*),
$$
where ${\bf t}^{(0)} \in \mathbb{Z}^{V_\lambda}$ and $(*)$ is a nonzero element of ${\rm Frac}(\mathcal{X}_q(\mathcal{D}_0)) = {\rm Frac}(\mathcal{X}_q(\fS,\lambda))$. It remains to show that ${\bf t}^{(0)}$ is balanced, i.e. lies in $\mathcal{B}_\lambda$.

We now apply $\mathcal{S}_{\lambda_0}$ to $Z^{{\bf t}^{(0)}} \cdot (*)$. Then, from the diagram,
\begin{align*}
\mathcal{S}_{E}( \Theta^\omega_{\lambda\lambda'}(\iota(Z^{{\bf t}'})))
& = \Theta^\omega_{
\lambda_E \lambda'_E} (\mathcal{S}_{
E} (\iota(Z^{{\bf t}'}))) \quad (\because \mbox{inner square}) \\
& = \Theta^\omega_{
\lambda_E \lambda'_E}( \iota( 
\mathcal{S}^{\rm bl}_E (Z^{{\bf t}'}))) \quad (\because \mbox{left trapezoid}) \\
& = \iota(
(\Theta^\omega_{
\lambda_E \lambda'_E})^{\rm bl} (
\mathcal{S}^{\rm bl}_E (Z^{{\bf t}'}))) \quad (\because \mbox{bottom trapezoid})
\end{align*}
Note that $
\mathcal{S}^{\rm bl}_E(Z^{{\bf t}'})$ is of the form $Z^{{\bf t}''}$ with ${\bf t}'' \in \mathcal{B}_{\bar{\lambda}'} \subset \mathbb{Z}^{V_{\bar{\lambda}'}}$, so by the similar arguments as $\Theta^\omega_{\lambda\lambda'}$ we see that $\iota(
(\Theta^\omega_{\lambda_E \lambda'_E})^{\rm bl} (
\mathcal{S}^{\rm bl}_E (Z^{{\bf t}'})))$ is of the form $Z^{{\bf t}'''} \cdot (**)$, with ${\bf t}''' \in \mathbb{Z}_{V_{\bar{\lambda}}}$ and $0 \neq (**) \in {\rm Frac}(\mathcal{X}_q(\fS',\bar{\lambda}))$. As the image of $
(\Theta^\omega_{\lambda_E \lambda'_E})^{\rm bl}$ lies in ${\rm Frac}(\mathcal{Z}^{\rm bl}_\omega(\fS',\bar{\lambda}))$, and since ${\rm Frac}(\mathcal{X}_q(\fS',\lambda))$ is contained in ${\rm Frac}(\mathcal{Z}^{\rm bl}_\omega(\fS',\bar{\lambda}))$, it follows that $Z^{{\bf t}'''}$ lies in $\mathcal{Z}^{\rm bl}_\omega(\fS',\bar{\lambda})$, i.e. ${\bf t}'''$ lies in $\mathcal{B}_{\bar{\lambda}}$. On the other hand, $\mathcal{S}_{
E}( \Theta^\omega_{\lambda\lambda'}(\iota(Z^{{\bf t}'}))) = \mathcal{S}_{
E}(Z^{{\bf t}^{(0)}}) \cdot \mathcal{S}_{
E}((*))$; here $\mathcal{S}_{
E}(Z^{{\bf t}^{(0)}}) = Z^{{\bf t}''''}$ for some ${\bf t}'''' \in \mathbb{Z}^{V_{\bar{\lambda}}}$ and $\mathcal{S}_{
E}((*))$ is a nonzero element of ${\rm Frac}(\mathcal{X}_q(\fS',\bar{\lambda}))$. By the equality $Z^{{\bf t}''''} \cdot \mathcal{S}_{E}((*)) = Z^{{\bf t}'''} \cdot (**)$, it follows that ${\bf t}''''$ also belongs to $\mathcal{B}_{\bar{\lambda}}$. We use the arguments in the proof of Lemma \ref{lem:cutting_homomorphism_for_mbl_FG_algebra_restritcs_to_bl}. Since the balancedness condition for a Laurent monomial is described for each triangle in a triangulation, and since the splitting homomorphism $\mathcal{S}_{E}$ preserves this condition, it follows that the balancedness of $Z^{{\bf t}''''} = \mathcal{S}_{
E}(Z^{{\bf t}^{(0)}})$ implies that of $Z^{{\bf t}^{(0)}}$, i.e. we get ${\bf t}^{(0)} \in \mathcal{B}_\lambda$, as desired. This finishes the proof of Proposition \ref{prop.Theta_restricts_to_balanced}.
\end{proof}

The two remarks below easily follow.
\begin{remark}
    The balanced quantum coordinate change isomorphism \\ $
    (\Theta^\omega_{\lambda\lambda'})^{\rm bl} : {\rm Frac}(\mathcal{Z}^{\rm bl}_\omega(\fS,\lambda')) \to {\rm Frac}(\mathcal{Z}^{\rm bl}_\omega(\fS,\lambda))$ obtained in Proposition \ref{prop.Theta_restricts_to_balanced} extends the map $\Phi^q_{\lambda\lambda'} : {\rm Frac}(\mathcal{X}_q(\fS,\lambda')) \to {\rm Frac}(\mathcal{X}_q(\fS,\lambda))$. (see Remark \ref{rem-extension}.)
\end{remark}

\begin{remark}\label{rem-consistence-Thata}
    The consistency equation holds (see \eqref{Theta_omega_lambda_lambda_prime_consistency}):
    $$
    (\Theta^\omega_{\lambda\lambda'})^{\rm bl} \circ (\Theta^\omega_{\lambda'\lambda''})^{\rm bl} = (\Theta^\omega_{\lambda\lambda''})^{\rm bl}
    $$
\end{remark}

By collecting what we have proved, we obtain the following version of Theorem \ref{thm-main-compatibility} written this time with respect to the coordinate change maps $
(\Theta^\omega_{\lambda\lambda'})^{\rm bl}$ between the skew-fields of fractions of balanced algebras.

\begin{theorem}[compatibility of quantum trace with balanced $n$-root quantum coordinate change maps $
(\Theta^\omega_{\lambda\lambda'})^{\rm bl}$]\label{thm-main-compatibility_bl}
    Let $\fS$ be a triangulable pb surface. For any two triangulations $\lambda$ and $\lambda'$ of $\fS$, the following diagram commutes:
    \begin{align}
        \label{eq-compability-tr-mutation_bl}
        \raisebox{7mm}{\xymatrix{
        & \rdS \ar[dl]_{
        \overline{\rm tr}_{\lambda'}} \ar[dr]^{
        \overline{\rm tr}_\lambda} & \\
        {\rm Frac}(\mathcal{Z}^{\rm bl}_\omega(\fS,\lambda')) \ar[rr]_-{(\Theta^\omega_{\lambda\lambda'})^{\rm bl}} & & {\rm Frac}(\mathcal{Z}^{\rm bl}_\omega(\fS,\lambda)).}}
    \end{align}
    That is, we have
    \begin{align}
    \label{compatibility_equation_bl}
        \overline{\rm tr}_\lambda = (\Theta^\omega_{\lambda\lambda'})^{\rm bl} \circ 
        \overline{\rm tr}_{\lambda'}.
    \end{align}
\end{theorem}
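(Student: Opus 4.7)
The plan is to simply assemble the results already established in the paper; Theorem \ref{thm-main-compatibility_bl} is essentially a clean corollary of Theorem \ref{thm-main-compatibility} combined with Proposition \ref{prop.Theta_restricts_to_balanced}, and there is no new combinatorial or computational obstruction to overcome. The nontrivial work (the network/sum-over-paths argument on $\mathbb{P}_4$ and the reduction via splitting homomorphisms) has already been carried out to prove the mutable-balanced version.

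First, I would recall from Theorem \ref{thm.quantum_trace}(a) that the image of each quantum trace map $\overline{\rm tr}_\lambda$ lies inside $\mathcal{Z}^{\rm bl}_\omega(\fS,\lambda)$, and hence sits naturally inside ${\rm Frac}(\mathcal{Z}^{\rm bl}_\omega(\fS,\lambda))$. Then, by Lemma \ref{lem:bl_in_mbl}, the embedding $\iota \colon {\rm Frac}(\mathcal{Z}^{\rm bl}_\omega(\fS,\lambda)) \hookrightarrow {\rm Frac}(\mathcal{Z}^{\rm mbl}_\omega(\fS,\lambda))$ intertwines $\overline{\rm tr}_\lambda$ viewed as a map into the balanced skew-field with $\overline{\rm tr}_\lambda$ viewed as a map into the mutable-balanced skew-field, and analogously for $\lambda'$.

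Next, by Theorem \ref{thm-main-compatibility} one has the naturality
\[
\overline{\rm tr}_\lambda \;=\; \Theta^\omega_{\lambda\lambda'} \circ \overline{\rm tr}_{\lambda'}
\]
as an equality of maps into ${\rm Frac}(\mathcal{Z}^{\rm mbl}_\omega(\fS,\lambda))$. Proposition \ref{prop.Theta_restricts_to_balanced} furnishes a restricted map $(\Theta^\omega_{\lambda\lambda'})^{\rm bl}\colon {\rm Frac}(\mathcal{Z}^{\rm bl}_\omega(\fS,\lambda')) \to {\rm Frac}(\mathcal{Z}^{\rm bl}_\omega(\fS,\lambda))$ such that $\iota \circ (\Theta^\omega_{\lambda\lambda'})^{\rm bl} = \Theta^\omega_{\lambda\lambda'} \circ \iota$. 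Combining the three ingredients,
\[
\iota \circ \overline{\rm tr}_\lambda
\;=\; \Theta^\omega_{\lambda\lambda'} \circ \iota \circ \overline{\rm tr}_{\lambda'}
\;=\; \iota \circ (\Theta^\omega_{\lambda\lambda'})^{\rm bl} \circ \overline{\rm tr}_{\lambda'},
\]
and the injectivity of $\iota$ yields the desired identity $\overline{\rm tr}_\lambda = (\Theta^\omega_{\lambda\lambda'})^{\rm bl} \circ \overline{\rm tr}_{\lambda'}$.

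There is no genuine obstacle here: every nontrivial ingredient (the network-theoretic compatibility on $\mathbb{P}_4$, the reduction via $\mathbb{S}_e$ and $\mathcal{S}_e$, and the restriction of $\Theta^\omega_{\lambda\lambda'}$ to balanced subalgebras through the polygon case of Lemma \ref{lem.quantum_trace_on_fractions}) has been done upstream. The only thing I need to make sure of is that I present the argument as a diagram chase in the tetrahedron-shaped diagram obtained by gluing the two triangles in \eqref{eq-compability-tr-mutation} and \eqref{eq-compability-tr-mutation_bl} along the four vertical embeddings $\iota$, noting that $\overline{\rm tr}_\lambda$ and $\overline{\rm tr}_{\lambda'}$ factor through the bottom balanced level by Theorem \ref{thm.quantum_trace}(a). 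This completes the proof of Theorem \ref{thm-main-compatibility_bl}.
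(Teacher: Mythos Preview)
Your proposal is correct and follows exactly the approach the paper takes: the paper simply states that Theorem \ref{thm-main-compatibility_bl} is obtained ``by collecting what we have proved,'' meaning Theorem \ref{thm-main-compatibility}, Proposition \ref{prop.Theta_restricts_to_balanced}, and Theorem \ref{thm.quantum_trace}(a), which is precisely the diagram chase you spell out. If anything, your write-up is more explicit than the paper's, which leaves the assembly to the reader.
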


\subsection{A comparison with coordinate change isomorphisms defined by 
L\^e and Yu}

Suppose that $\fS$ is a triangulable pb surface and $\lambda,\lambda'$ are two triangulations of $\fS$.
L{\^ e} and Yu  also constructed 
a coordinate change isomorphism \cite[Theorem 14.2]{LY23}
 $$\overline{\Psi}_{\lambda\lambda'}^X \, \colon \,
 \Fr(\mathcal Z_\omega^{\rm bl}(\fS,\lambda'))\rightarrow
 \Fr(\mathcal Z_\omega^{\rm bl}(\fS,\lambda))$$
 such that the following diagram commutes:
 \begin{align}
        \label{eq-compability-tr-mutation_bl_LY}
        \raisebox{7mm}{\xymatrix{
        & \rdS \ar[dl]_{\overline{\rm tr}_{\lambda'}} \ar[dr]^{\overline{\rm tr}_\lambda} & \\
        {\rm Frac}(\mathcal{Z}^{\rm bl}_\omega(\fS,\lambda')) \ar[rr]_-{\overline{\Psi}_{\lambda\lambda'}^X} & & {\rm Frac}(\mathcal{Z}^{\rm bl}_\omega(\fS,\lambda)).}}
    \end{align}
For any three triangulations $\lambda,\lambda',\lambda'$ of $\fS$,  
the consistency equation holds:
\begin{align}\label{eq-consistence-phi}
    \overline{\Psi}^X_{\lambda\lambda'} \circ \overline{\Psi}^X_{\lambda'\lambda''} = \overline{\Psi}^X_{\lambda\lambda''}.
\end{align}

Here we briefly recall the construction of 
$\overline{\Psi}_{\lambda\lambda'}^X$ of L\^e and Yu.
As we mentioned in Lemma \ref{lem.quantum_trace_on_fractions},
 in \cite{LY23} L{\^ e} and Yu proved that the quantum trace map
 $$\tr\colon\rdS\rightarrow \mathcal Z_\omega^{\rm bl}(\fS,\lambda)$$
 induces an isomorphism 
$$\Fr(\tr)
\colon\Fr(\rdS)\rightarrow \Fr(\mathcal Z_\omega^{\rm bl}(\fS,\lambda))$$
when $\fS$ is a polygon.
They define
 $\overline{\Psi}_{\lambda\lambda'}^X$
 to be $\Fr(\overline{\rm tr}_{\lambda})\circ\Fr(\overline{\rm tr}_{\lambda'})^{-1}$ when $\fS$ is a polygon.
 Then they generalize the definition of $\overline{\Psi}_{\lambda\lambda'}^X$ by cutting $\fS$ into a collection of polygons
 (\cite[
 \S14.3]{LY23}).

We show that the L\^e-Yu coordinate change map $\overline{\Psi}^X_{\lambda\lambda'}$ coincides with the balanced quantum cluster coordinate change map $(\Theta^\omega_{\lambda\lambda'})^{\rm bl}$ which we studied in the present paper. This answers the expectation stated in \cite[\S14.4]{LY23}, which provided the very motivation for the present paper.

 \begin{theorem}[L\^e-Yu quantum coordinate change equals balanced quantum cluster coordinate change]\label{thm-main-comparision}
     Suppose that $\fS$ is a triangulable pb surface and $\lambda,\lambda'$ are two triangulations of $\fS$.
     We have 
     $$\overline{\Psi}_{
     \lambda\lambda'}^X = 
     (\Theta^\omega_{\lambda \lambda'})^{\rm bl}.$$
 \end{theorem}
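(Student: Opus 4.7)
The plan is to reduce the theorem to the case where $\fS$ is a polygon, where both maps become uniquely determined by their intertwining with the quantum trace, and then to transfer this equality to an arbitrary triangulable pb surface using the splitting homomorphisms of the balanced Fock--Goncharov algebras.

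First I would settle the polygon case. Suppose $\fS = \mathbb{P}_k$ with $k \geq 3$. By Lemma~\ref{lem.quantum_trace_on_fractions}, the induced map ${\rm Frac}(\overline{\rm tr}_{\lambda'}) : {\rm Frac}(\overline{\cS}_\omega(\fS)) \to {\rm Frac}(\mathcal{Z}^{\rm bl}_\omega(\fS,\lambda'))$ is a skew-field isomorphism. Both $\overline{\Psi}^X_{\lambda\lambda'}$ (by the defining diagram~\eqref{eq-compability-tr-mutation_bl_LY}) and $(\Theta^\omega_{\lambda\lambda'})^{\rm bl}$ (by Theorem~\ref{thm-main-compatibility_bl}) satisfy the identity $\phi \circ {\rm Frac}(\overline{\rm tr}_{\lambda'}) = {\rm Frac}(\overline{\rm tr}_\lambda)$. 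Since ${\rm Frac}(\overline{\rm tr}_{\lambda'})$ is invertible, both maps are forced to equal ${\rm Frac}(\overline{\rm tr}_\lambda) \circ {\rm Frac}(\overline{\rm tr}_{\lambda'})^{-1}$, so they coincide on every polygon.

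Next, the consistency equations \eqref{eq-consistence-phi} for $\overline{\Psi}^X$ and Remark~\ref{rem-consistence-Thata} for $(\Theta^\omega)^{\rm bl}$ let me assume that $\lambda$ and $\lambda'$ differ by a single flip at a non-boundary edge $e_1$. Following the cutting strategy of the proof of Theorem~\ref{thm-main-compatibility}, I would cut $\fS$ along every non-boundary edge of $\lambda$ other than $e_1$, producing the pb surface $\fS_E = \mathsf{Cut}_E(\fS)$, which is a disjoint union of one copy of $\mathbb{P}_4$ (containing $e_1$) and several copies of $\mathbb{P}_3$. The induced triangulations $\lambda_E$ and $\lambda'_E$ differ only on the $\mathbb{P}_4$ component, by the flip at $e_1$, and agree on every $\mathbb{P}_3$ component, where both coordinate-change maps are the identity.

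Finally, consider the splitting homomorphism $\mathcal{S}^{\rm bl}_E : {\rm Frac}(\mathcal{Z}^{\rm bl}_\omega(\fS,\lambda)) \to {\rm Frac}(\mathcal{Z}^{\rm bl}_\omega(\fS_E,\lambda_E))$, which is injective by Lemma~\ref{lem:cutting_homomorphism_for_mbl_FG_algebra_restritcs_to_bl} together with the injectivity of $\mathcal{S}_E$. The map $(\Theta^\omega_{\lambda\lambda'})^{\rm bl}$ commutes with $\mathcal{S}^{\rm bl}_E$ by Lemma~\ref{lem-mutation-cutiing} combined with Proposition~\ref{prop.Theta_restricts_to_balanced}, while the analogous compatibility for $\overline{\Psi}^X_{\lambda\lambda'}$ is built directly into the L\^e--Yu construction, since $\overline{\Psi}^X_{\lambda\lambda'}$ is defined by cutting $\fS$ into polygons in \cite[\S14.3]{LY23}. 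Combining these two compatibilities with the polygon case, for every $z \in {\rm Frac}(\mathcal{Z}^{\rm bl}_\omega(\fS,\lambda'))$ one has
$$\mathcal{S}^{\rm bl}_E(\overline{\Psi}^X_{\lambda\lambda'}(z)) = \overline{\Psi}^X_{\lambda_E\lambda'_E}(\mathcal{S}^{\rm bl}_E(z)) = (\Theta^\omega_{\lambda_E\lambda'_E})^{\rm bl}(\mathcal{S}^{\rm bl}_E(z)) = \mathcal{S}^{\rm bl}_E((\Theta^\omega_{\lambda\lambda'})^{\rm bl}(z)),$$
and injectivity of $\mathcal{S}^{\rm bl}_E$ gives the desired equality. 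The main step requiring care is extracting from L\^e--Yu's definition of $\overline{\Psi}^X_{\lambda\lambda'}$ its compatibility with the balanced splitting homomorphism in the form used above; once this is in hand, the remaining steps are essentially formal.
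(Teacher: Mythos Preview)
Your proposal is correct and follows essentially the same approach as the paper's own proof: reduce to a single flip via the consistency relations, settle the polygon case by the uniqueness forced by Lemma~\ref{lem.quantum_trace_on_fractions}, and then for a general surface cut down to a disjoint union of polygons and use injectivity of $\mathcal{S}^{\rm bl}_E$ together with the compatibility of both $\overline{\Psi}^X$ and $(\Theta^\omega)^{\rm bl}$ with splitting. The paper dispatches the point you flag as ``requiring care'' by citing \cite[equation~(253)]{LY23} for the compatibility of $\overline{\Psi}^X_{\lambda\lambda'}$ with the splitting homomorphisms.
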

 \begin{proof}
     Because of Remark \ref{rem-consistence-Thata} and equation \eqref{eq-consistence-phi},
     we can assume that $\lambda'$ is obtained from $\lambda$ by a flip on an edge $e\in\lambda$.
      When $\fS$ is a polygon, Lemma \ref{lem.quantum_trace_on_fractions} implies the theorem. That is, Lemma \ref{lem.quantum_trace_on_fractions} implies that an isomorphism $\overline{\Psi}^X_{\lambda\lambda'} : {\rm Frac}(\mathcal{Z}^{\rm bl}_\omega(\fS,\lambda')) \to {\rm Frac}(\mathcal{Z}^{\rm bl}_\omega(\fS,\lambda))$ making the diagram in \eqref{eq-compability-tr-mutation_bl_LY} is unique (see \cite[Cor.14.3]{LY23} for such a uniqueness), hence equals $(\Theta^\omega_{\lambda\lambda'})^{\rm bl}$, in view of Theorem \ref{thm-main-compatibility_bl}.
      
      When $\fS$ is a general triangulable pb surface, we can use the trick as in the proofs of Theorem \ref{thm-main-compatibility} and Proposition \ref{prop.Theta_restricts_to_balanced}. We cut $\fS$ into a $\mathbb P_4$ containing $e$ and some $\mathbb P_3$. 
Consider the outermost square of the diagram in \eqref{compatibility_diagram_for_mbl_and_bl}, which is commutative. This diagram still commutes if we replace the top horizontal map by $\overline{\Psi}^X_{\lambda\lambda'}$ and the bottom horizontal map by $\overline{\Psi}^X_{\lambda_E\lambda'_E}$, because the definition of the L\^e-Yu maps $\overline{\Psi}^X_{\lambda\lambda'}$ (see \cite[equation (253)]{LY23}) implies that they are compatible with the splitting homomorphisms. From the polygon case, we see that the original bottom horizontal map $(\Theta^\omega_{\lambda_E\lambda'_E})^{\rm bl}$ coincides with $\overline{\Psi}^X_{\lambda_E\lambda'_E}$.

Since the right vertical map $\mathcal{S}^{\rm bl}_E$ in this original square diagram are injective, it follows that the top horizontal map that makes this diagram commutative is unique, if exists. By this uniqueness, it follows that $(\Theta^\omega_{\lambda\lambda'})^{\rm bl}$ coincides with $\overline{\Psi}^X_{\lambda\lambda'}$, as desired.

 \end{proof}

 \begin{remark}
     The proof of Theorem \ref{thm-main-comparision} implies that $\overline{
     \Psi}_{
     \lambda \lambda'}^X
     = 
     (\Theta^\omega_{\lambda\lambda'})^{\rm bl}$ as long as $\overline{
     \Psi}_{\lambda'\lambda}^X$ makes the diagram \eqref{eq-compability-tr-mutation_bl_LY} commute and satisfies the equation \eqref{eq-consistence-phi}. 
     This shows the uniqueness  of 
     $\overline{\Theta}_{
     \lambda\lambda'}^\omega$ with properties stated in Theorem \ref{thm-main-compatibility}
and Remark \ref{rem-consistence-Thata}.
\end{remark}

\end{document}